\chardef\bslash=`\\ % p. 424, TeXbook
\colorlet{NadavColor}{-green!40!yellow}
\colorlet{ArisColor}{red}
\newtheorem{theorem}{Theorem}[section]
\newtheorem*{theorem*}{Theorem}
\newtheorem{lemma}[theorem]{Lemma}
\newtheorem{proposition}[theorem]{Proposition}
\newtheorem*{proposition*}{Proposition}
\newtheorem{corollary}[theorem]{Corollary}
\newtheorem*{corollary*}{Corollary}
\newtheorem*{fact*}{Fact}
\newtheorem{fact}[theorem]{Fact}
\newtheorem{thmx}{Theorem}
\Crefname{thmx}{Theorem}{Theorems}
\theoremstyle{definition}
\newtheorem*{definition*}{Definition}
\newtheorem{definition}[theorem]{Definition}
\newtheorem{question}[theorem]{Question}
\newtheorem*{question*}{Question}
\newtheorem{example}[theorem]{Example}
\newtheorem{note}[theorem]{Note}
\newtheorem*{notation*}{Notation}
\theoremstyle{remark}
\newtheorem{remark}[theorem]{Remark}
\newcommand{\limplies}{\ensuremath{\rightarrow}}
\newcommand{\into}{\ensuremath{\hookrightarrow}}
\def\Ind#1#2{#1\setbox0=\hbox{$#1x$}\kern\wd0\hbox to 0pt{\hss$#1\mid$\hss}
\lower.9\ht0\hbox to 0pt{\hss$#1\smile$\hss}\kern\wd0}
\def\notind#1#2{#1\setbox0=\hbox{$#1x$}\kern\wd0
\hbox to 0pt{\mathchardef\nn=12854\hss$#1\nn$\kern1.4\wd0\hss}
\hbox to 0pt{\hss$#1\mid$\hss}\lower.9\ht0 \hbox to 0pt{\hss$#1\smile$\hss}\kern\wd0}
\newcommand{\N}{\mathbb{N}}
\newcommand{\Z}{\mathbb{Z}}
\newcommand{\bN}{\mathbb{N}}
\newcommand{\bM}{\mathbb{M}}
\newcommand{\M}{\mathcal{M}}
\newcommand{\C}{\mathcal{C}}
\newcommand{\Ncal}{\mathcal{N}}
\newcommand{\Mcal}{\mathcal{M}}
\newcommand{\Mbb}{\mathbb{M}}
\renewcommand{\L}{\mathcal{L}}
\newcommand{\Th}{\mathsf{Th}}
\newcommand{\tp}{\mathsf{tp}}
\newcommand{\qftp}{\mathsf{qftp}}
\newcommand{\cA}{\mathcal{A}}
\newcommand{\cB}{\mathcal{B}}
\newcommand{\cC}{\mathcal{C}}
\newcommand{\cL}{\mathcal{L}}
\newcommand{\cM}{\mathcal{M}}
\newcommand{\cN}{\mathcal{N}}
\let\lL\cL
\newcommand{\LO}{\mathsf{LO}}
\newcommand{\Flim}{\ensuremath{\mathsf{Flim}}}
\newcommand{\restr}{\upharpoonright}
\newcommand{\arrows}[2]{#1\rightarrow(#2)}
\DeclareMathOperator{\Age}{\mathsf{Age}}
\DeclareMathOperator{\Aut}{\mathsf{Aut}}
\DeclareMathOperator{\ind}{\mathsf{Ind}}
\let\age\Age
\let\vphi\varphi
\newcommand{\qford}{$\L_{\infty,0}$-orderable}
\renewcommand{\Im}{\mathsf{Im}}
\title[Infinitary Expansions]{Practical \& Structural Infinitary Expansions}
\date{\today}
\subjclass[2020]{Primary: 05C55, 03C75 Secondary: 03C52 }
\keywords{Generalised Indiscernibles, Ramsey class, Modelling Property, Infinitary Logic}
\author[N. Meir]{Nadav Meir \orcidlink{0000-0002-7774-2892}}
\address{Instytut Matematyczny,\\ Uniwersytet Wroc{\l}awski,\\ pl. Grunwaldzki 2,\\ 50-384 Wroc{\l}aw, \\ Poland}
\email{\href{mailto:nadavmeir@gmail.com}{nadavmeir@gmail.com}}
\author[A. Papadopoulos]{Aris Papadopoulos\orcidlink{0000-0001-7071-4277}}
\address{School of Mathematics, \\University of Leeds, \\Leeds LS2 9JT, \\ United Kingdom}
\email{\href{mailto:mmadp@leeds.ac.uk}{mmadp@leeds.ac.uk}}
\thanks{Meir is supported by Narodowe Centrum Nauki, Poland, grant 2016/22/E/ST1/00450. Papadopoulos is supported by a Leeds Doctoral Scholarship, from the University of Leeds. This research is part of his Ph.D. project} 
\begin{document}

\maketitle

\begin{abstract}
    Given a structure $\M$ we introduce infinitary logic expansions, which generalise the Morleyisation. We show that these expansions are tame, in the sense that they preserve and reflect both the Embedding Ramsey Property (ERP) and the Modelling Property (MP). We then turn our attention to Scow's theorem connecting generalised indiscernibles with Ramsey classes and show that by passing through infinitary logic, one can obtain a stronger result, which does not require any technical assumptions. We also show that every structure with ERP, not necessarily countable, admits a linear order which is a union of quantifier-free types, effectively proving that any Ramsey structure is ``essentially'' ordered. We also introduce a version of ERP for classes of structures which are not necessarily finite (the finitary-ERP) and prove a strengthening of the Kechris-Pestov-Todorcevic correspondence for this notion.
\end{abstract}

\section{Introduction}\label{sec:introduction}

Structural Ramsey theory focuses on classes of structures which satisfy a ``structural'' analogue of the conclusion of Ramsey's Theorem. Such classes are called Ramsey classes (see \Cref{def:hp-jep-ap-erp}(4), and \Cref{def:rc} for precise definitions). In the terminology of structural Ramsey theory, Ramsey's theorem says that the class of all finite linear orders is a Ramsey class. The study of various properties of Ramsey classes has been a central theme in twenty-first-century combinatorics, being revitalised by the surprising and beautiful results of \cite{KechrisPestovTodorcevic_2005}, connecting Ramsey classes with topological dynamics. 

On the other hand, in recent years, generalised indiscernibles have turned out to be a central tool in modern model theory, in particular, in classification theory beyond stable structure. More precisely, a sequence of (same-length) tuples in some $\L$-structure $\M$, indexed by elements of an $\L^\prime$-structure $\Ncal$ is an \emph{$\Ncal$-indexed indiscernible} if whenever elements in the indexing set have the same quantifier-free type in $\Ncal$, then the tuples they index have the same type in $\M$. This is a vast generalisation of the concept of an order-indiscernible sequence, which in this terminology can be recast as an $\Ncal$-indexed indiscernible sequence, where $\Ncal$ is any infinite linearly ordered set. A crucial property of order-indiscernible sequences is the following: For any infinite linear order $(\Ncal,\leq)$, whenever we are given an $(\Ncal,\leq)$-indexed sequence in some sufficiently saturated $\L$-structure we can find an $(\Ncal,\leq)$-indexed indiscernible sequence realising its Ehrenfeucht–Mostowski type (EM-type). The generalisation of this property for arbitrary indexing structures is known as the \emph{modelling property} (defined in \Cref{def:mp}). Applications of generalised indiscernibles can be traced back to the foundational work of Shelah in classification theory \cite{Shelah_1990}. Returning to current trends in model theory, generalised indiscernibles have found applications in model-theoretic tree property arguments (for example, as in \cite{ChernikovRamsey_2016}). Moreover, it has been shown that many classification-theoretic dividing lines can be characterised by a collapse of one kind of generalised indiscernibles to another; see, for instance, \cite{Scow_2012, GuingonaHillScow_2017, ChernikovPalacinTakeuchi_2019}. 

The two lines of research outlined above may seem, at first sight, disconnected from each other, but they are, in fact, closely intertwined. In his original paper \cite{Ramsey_1930}, Ramsey used his theorem to show that certain fragments of first-order logic were decidable, essentially through a weak form of indiscernibility (though not under that name). Moreover, the fact that given any sequence indexed by a linear order, one can always realise its EM-type by an indiscernible sequence is essentially an application of Ramsey's Theorem and compactness. More recently, Scow showed that the connection between indiscernibility and Ramsey theory is even deeper than what we just outlined. More precisely, the main theorem of \cite{Scow_2015} says that under some technical assumptions, $\Ncal$-indexed indiscernibles have the modelling property if, and only if, $\Age(\Ncal)$ is a Ramsey class. 

One of the main goals of this paper is to strengthen that theorem by showing that most of these assumptions are, in fact, not necessary. More precisely, we show the following theorem:

\begin{thmx}[\Cref{rc mp}]\label{rc mp intro}
        Let $\L^\prime$ be a first-order language, $\C$ a class of finite $\L^\prime$-structures, and $\Ncal$ an infinite, locally finite $\L^\prime$-structure such that $\Age(\Ncal) = \C$. Then, the following are equivalent:
    \begin{enumerate}
        \item $\C$ is a Ramsey class.
        \item $\Ncal$-indexed indiscernibles have the modelling property.
    \end{enumerate}
\end{thmx}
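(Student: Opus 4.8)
\emph{Setup.} Since $\C=\Age(\Ncal)$, statement (1) is — after unwinding the definition of a Ramsey class, using that every age has the hereditary and joint-embedding properties (the latter via local finiteness, so that the substructure generated by two finite substructures is finite), and that a hereditary class with joint embedding and the embedding Ramsey property (ERP) amalgamates — equivalent to the assertion that $\Ncal$ has ERP. The plan is to reduce both implications to the infinitary Morleyisation $\Ncal^*$ of $\Ncal$: this is a relational structure, still locally finite, in which ``being a copy of a fixed finite substructure'' is \emph{atomic} and quantifier-free $\L_{\infty,0}$-type coincides with $\Aut$-orbit. By the preservation-and-reflection results for infinitary expansions, $\Ncal^*$ has ERP if and only if $\Ncal$ does, and $\Ncal^*$-indexed indiscernibles have the modelling property if and only if $\Ncal$-indexed ones do; so it suffices to prove the equivalence with $\Ncal$ replaced by $\Ncal^*$.

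\emph{(2) $\Rightarrow$ (1).} I would argue by contraposition. Suppose ERP fails for $\Ncal^*$, witnessed by finite $A,B\leq\Ncal^*$ and $k\geq 2$; since every finite union of finite substructures is finite, a compactness argument in $k^{\Emb(A,\Ncal^*)}$ upgrades the ``bad'' colourings on finite pieces to a single $\chi\colon\Emb(A,\Ncal^*)\to k$ with no monochromatic copy of $B$ anywhere. Expand to $\M=(\Ncal^*,R_0,\dots,R_{k-1})$, where $R_t$ holds of an $|A|$-tuple iff it enumerates a copy of $A$ of $\chi$-colour $t$ — here ``being a copy of $A$'' is first-order (indeed atomic in $\Ncal^*$), so ``exactly one $R_t$ holds of any copy of $A$'' is a first-order sentence true in $\M$. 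Apply the modelling property to the singleton sequence $a_i=i$ to obtain, in an elementary extension of $\M$, an $\Ncal^*$-indexed indiscernible $(b_i)$ locally based on $(a_i)$. The basing condition, read off atomic formulas and their negations, shows $i\mapsto b_i$ is an isomorphism of $\Ncal^*$ onto a substructure of the $\L^*$-reduct; then for any copy $\bar j$ of $B$ in $\Ncal^*$, indiscernibility makes the type of each sub-copy of $A$ in $b_{\bar j}$ depend only on the quantifier-free type of $A$, so a single colour $t_0$ has $R_{t_0}$ holding of all of them; finally the basing condition, applied to the (finitary, as $B$ is finite) formula $\bigwedge_{h\colon A\to B}R_{t_0}(x_{\sigma_h})$, yields a genuine $\chi$-monochromatic copy of $B$ in $\Ncal^*$, a contradiction.

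\emph{(1) $\Rightarrow$ (2).} Assume $\Ncal^*$ has ERP. Given an $\Ncal^*$-indexed sequence $(a_i)$ in a structure $\M$, I want an $\Ncal^*$-indexed indiscernible locally based on it; by compactness it is enough, for each finite set $\Delta$ of formulas and each finite substructure $B_0\leq\Ncal^*$, to exhibit a copy $B_0'$ of $B_0$ in $\Ncal^*$ such that re-indexing $(a_i)$ along the isomorphism $B_0\to B_0'$ is $\Delta$-indiscernible within $B_0$ and based on $(a_i)$ (basing then being automatic, as all the relevant $\Delta$-types are read off $(a_i)$). Since $B_0$ has only finitely many enumerated substructures $A_1,\dots,A_t$, each inducing a colouring of its copies by the finitely many possible $\Delta$-types, I would thread ERP through a chain $C\to(C^{(1)})^{A_1}_{k_1}\to\cdots\to(B_0)^{A_t}_{k_t}$ to find $B_0'$ inside which, for every $A_\ell$, the $\Delta$-type of $a_{\bar c}$ is constant over all copies $\bar c$ of $A_\ell$ — which is exactly $\Delta$-indiscernibility within $B_0$. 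The point that makes these monochromaticity demands simultaneously satisfiable is that ERP forces every finite substructure of $\Ncal^*$ to be rigid — indeed, by the paper's result, $\Ncal^*$ carries an $\Aut$-invariant linear order, quantifier-free definable in $\Ncal^*$ since it is invariant; were there a finite substructure with a nontrivial automorphism, a generic source sequence would show the modelling property fails, so this hypothesis is doing real work. Transferring back to $\Ncal$ then gives the modelling property for $\Ncal$-indexed indiscernibles.

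\emph{Main obstacle.} The hard direction is (1) $\Rightarrow$ (2). Conceptually, the crux is recognising that rigidity — equivalently, the ``essentially ordered'' phenomenon — is precisely what underwrites the iterated-Ramsey step; technically, the crux is the bookkeeping in that step together with verifying that the passage to $\Ncal^*$ genuinely preserves local finiteness and transports ERP and the modelling property in both directions, so that the reduction to the ordered, relational, quantifier-free-closed case is legitimate.
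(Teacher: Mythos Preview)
Your proposal is essentially correct and both directions go through, but it follows a genuinely different route from the paper's proof.

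The paper treats Scow's theorem (\Cref{thm:scow}) as a black box and concentrates on stripping its hypotheses. QFI is removed by passing to the quantifier-free type Morleyisation (\Cref{cor:scow-no-qfi}). The order hypothesis is then discharged separately in each direction: for $(2)\Rightarrow(1)$, by showing that MP already implies $\L'_{\infty,0}$-orderability (\Cref{mp-implies-ordered-expansion}); for $(1)\Rightarrow(2)$, by first handling the case where $\Age(\Ncal)$ is countable --- there a countable Ramsey age is $\L'_{\infty,0}$-orderable via Fra\"iss\'e plus the KPT/Bodirsky machinery (\Cref{countable rc order}) --- and then reducing the general case to the countable one by covering $\Ncal$ with countable substructures whose ages are Ramsey and invoking \Cref{MP for union of countable}. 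Your argument, by contrast, re-proves both implications from scratch inside the Morleyised structure $\Ncal^*$: the ``encode a global bad colouring as relations and contradict MP'' argument for $(2)\Rightarrow(1)$, and the iterated-Ramsey-plus-compactness argument for $(1)\Rightarrow(2)$. This is more self-contained and sidesteps the order, KPT, and the countable-cover reduction entirely; the paper's route is more modular but leans on heavier external input.

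One correction: your rigidity discussion in $(1)\Rightarrow(2)$ is a red herring. The iterated Ramsey step (the content of \Cref{Ramsey many}) does not need rigidity as an input, because ERP is phrased for \emph{embeddings}. If a substructure $A$ has a nontrivial automorphism $\sigma$, then $e$ and $e\circ\sigma$ are distinct elements of $\binom{C}{A}$ and are both forced monochromatic inside the good copy of $B_0$. When several quantifier-free types of tuples share the same underlying substructure $A$, you simply combine their $\Delta$-type colourings into a single product colouring of $\binom{C}{A}$ before applying \Cref{Ramsey many}. Rigidity is a \emph{consequence} of ERP (\Cref{ramsey classes have rigid elts}), not an ingredient in this direction.
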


As an almost immediate corollary, we show that every Ramsey class admits an order, given by a union of quantifier-free types, extending a known result for countable Ramsey classes, from \cite{Bodirsky_2015}. More precisely, we obtain the following result:

\begin{thmx}[\Cref{rc qford}]\label{rc qford intro}
        Let $\C$ be a Ramsey class of $\L^\prime$-structures and $\Ncal$ an $\L^\prime$-structure such that $\Age(\Ncal) = \C$. Then there is an $\Aut(\Ncal)$-invariant linear order on $\Ncal$ which is the union of quantifier-free types. More explicitly, there is a (possibly infinite) Boolean combination of atomic and negated atomic $\L^\prime$-formulas $\Phi(x,y):=\bigvee_{i\in I}\bigwedge_{j\in J_i} \vphi_{j_i}^{(-1)^{n_{j_i}}}(x,y)$, such that $\Phi$ is a linear order for every structure in $\C$.
\end{thmx}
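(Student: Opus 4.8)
The plan is to derive this as a quick consequence of \Cref{rc mp intro}. Since $\Age(\Ncal)=\C$ is a class of finite structures, $\Ncal$ is locally finite, so in the principal case --- $\Ncal$ infinite --- \Cref{rc mp intro} applies and tells us that $\Ncal$-indexed indiscernibles have the modelling property. I would use this with a \emph{linearly ordered} target: fix a sufficiently large, sufficiently saturated model $\M$ of $\DLO$ in the language $\{<\}$, choose an injection $\iota\colon\Ncal\to\M$, and put $b_i:=\iota(i)$ for $i\in\Ncal$. The modelling property then produces an $\Ncal$-indexed indiscernible sequence $(a_i)_{i\in\Ncal}$ of singletons in $\M$ that is locally based on $(b_i)_{i\in\Ncal}$, and I would define a binary relation on $\Ncal$ by $i\prec j :\Leftrightarrow \M\models a_i<a_j$.

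The first task is to verify that $\prec$ is a strict linear order on $\Ncal$. For distinct $i,j\in\Ncal$, local basedness applied to the pair $(i,j)$ and the formulas $x<y$, $y<x$, $x=y$ yields a pair $(j_1,j_2)$ from $\Ncal$ with $\qftp^{\Ncal}(j_1,j_2)=\qftp^{\Ncal}(i,j)$ on which $(a_i,a_j)$ and $(b_{j_1},b_{j_2})$ agree; since $x\neq y$ is quantifier-free and $i\neq j$, we get $j_1\neq j_2$, hence $b_{j_1}\neq b_{j_2}$, hence --- $<$ being a linear order on $\M$ --- exactly one of $a_i<a_j$, $a_j<a_i$ holds and $a_i\neq a_j$; irreflexivity and transitivity of $\prec$ are then inherited directly from $<$ in $\M$. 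Indiscernibility of $(a_i)_{i\in\Ncal}$ shows moreover that whether $i\prec j$ holds depends only on $\qftp^{\Ncal}(i,j)$: if $\qftp^{\Ncal}(i,j)=\qftp^{\Ncal}(i',j')$ then $\tp^{\M}(a_i,a_j)=\tp^{\M}(a_{i'},a_{j'})$, so $a_i<a_j\Leftrightarrow a_{i'}<a_{j'}$. In particular $\prec$ is $\Aut(\Ncal)$-invariant; and writing $P$ for the set of complete quantifier-free $2$-types $p$ realised in $\Ncal$ with ``$i\prec j$'' and identifying each $p\in P$ with the conjunction of the literals in $p$, the formula $\Phi(x,y):=\bigvee_{p\in P}p(x,y)$ is an infinitary Boolean combination of atomic and negated atomic $\L^\prime$-formulas of exactly the shape in the statement, and it defines $\prec$ on $\Ncal$.

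It then remains to transfer $\Phi$ from $\Ncal$ to all of $\C$. Every $M\in\C=\Age(\Ncal)$ is finite and hence embeds into $\Ncal$; since $\Phi$ is built purely from quantifier-free formulas it is preserved under such an embedding, so the relation defined by $\Phi$ on $M$ is the isomorphic pullback of the restriction of the linear order $\prec$ to a substructure of $\Ncal$, and is therefore itself a linear order. This settles the principal case. The remaining case of finite $\Ncal$ can be handled by a short direct argument: applying the Ramsey property to $2$-element substructures $A\le\Ncal$ with two colours --- and noting that the only member of $\C$ into which $\Ncal$ embeds is $\Ncal$ itself --- forces $\qftp^{\Ncal}$ to be injective on pairs of distinct elements of $\Ncal$, so $\Ncal$ is rigid and \emph{any} linear order on the finite set $\Ncal$ is, vacuously, a union of quantifier-free types and $\Aut(\Ncal)$-invariant, and transfers to $\C$ exactly as above.

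Once \Cref{rc mp intro} is in hand there is no genuine obstacle --- this really is a corollary. The two points that call for care are (i) unwinding ``locally based on'' correctly, so as to extract the required pairwise facts (distinctness and $<$-comparability of the $a_i$), and (ii) the descent from $\Ncal$ to the possibly much larger class $\C$, which goes through precisely because $\Age(\Ncal)=\C$ guarantees that every quantifier-free $2$-type over $\C$ is already realised in $\Ncal$.
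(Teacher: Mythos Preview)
Your approach is correct and is essentially the paper's own: apply \Cref{rc mp intro} to obtain MP, then run MP against a linear order to produce a non-constant $\Ncal$-indiscernible sequence of singletons, from which the $\L'_{\infty,0}$-order is read off via indiscernibility --- the paper packages this last step as \Cref{mp-implies-ordered-expansion} (resting on \Cref{existence of inds} and \Cref{existence2 of inds equivalent to qf-definable structure}), which you have simply inlined. One small point on your finite-$\Ncal$ case: the step ``$\Ncal\to(\Ncal)^A_2$ forces $\qftp$ to be injective on pairs'' tacitly needs that the only embedding $\Ncal\hookrightarrow\Ncal$ is the identity, i.e.\ that $\Ncal$ is rigid; this is available directly from \Cref{ramsey classes have rigid elts}, and should be invoked \emph{before} the injectivity claim rather than deduced from it.
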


Given an $\L^\prime$-structure $\Ncal$, we will say that $\C := \Age(\Ncal)$ is \emph{$\L^\prime_{\infty,0}$-orderable} (see \Cref{def:qford}) if there is a (possibly infinite) Boolean combination of atomic and negated atomic $\L^\prime$-formulas which is a linear order for every structure in $\C$. We observe that this is the minimal required condition such that for every theory $T$ there is some $\Ncal$-indexed indiscernible sequence (in a sufficiently saturated model of $T$), for a more precise statement, see \Cref{existence of inds}, which lists equivalent criteria for the existence of $\Ncal$-indexed indiscernibles. 

In \Cref{sec:infinitary-logic} we introduce a set of techniques based around ``tame'' expansions in infinitary logic, which are the basic tools we use in the proofs of \Cref{rc mp intro,rc qford intro}. These techniques seem to be very fruitful in generalising arguments known to work for relational/finite structures to arbitrary ones.

We also introduce and discuss a generalisation of the Ramsey Property, the \emph{finitary Ramsey Property} (see \Cref{def:f-ERP}), which, we argue, is the natural generalisation of the Ramsey Property to classes of possibly infinite, not necessarily relational structures. To this end, we show, first of all, that \Cref{rc mp intro} holds without the assumption of local finiteness if one replaces ``$\C$ is a Ramsey class'' by ``$\C$ is a finitary Ramsey class'' (see \Cref{frc mp}, for a more precise statement). Moreover, we also obtain the following stronger version of the well-known correspondence of Kechris, Pestov, and Todorcevic:

\begin{thmx}[\Cref{thm:kpt-fERP}]\label{thm:fERP-intro}
    Let $\M$ be an ultrahomogeneous $\L$-structure. Then, the following are equivalent:
    \begin{enumerate}[(1)]
        \item $\Aut(\M)$ is extremely amenable.
        \item $\Age(\M)$ is a finitary Ramsey class.
    \end{enumerate}
\end{thmx}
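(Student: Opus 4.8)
The plan is to run the Kechris--Pestov--Todorcevic argument, with the finitary Ramsey Property of \Cref{def:f-ERP} playing the part that the ordinary Ramsey property plays there. Write $G=\Aut(\M)$, equipped with the topology of pointwise convergence, and $\C=\Age(\M)$. The whole argument is organised around the standard dynamical reformulation of extreme amenability for automorphism groups of ultrahomogeneous structures: $G$ is extremely amenable if and only if, for every finitely generated $A\leq B\leq\M$, every finite set $\mathcal{F}$ of copies of $A$ inside $B$, every $k\in\N$, and every $k$-colouring $c$ of the copies of $A$ in $\M$, there is $g\in G$ for which $c$ takes a single value on those prescribed copies of $A$, now sitting inside $g(B)$. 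For finitely generated $A$ this reformulation needs no topological hypothesis on $c$, since the relevant point-stabiliser is open --- and this is precisely why ``finitary'' is the right notion, and why the ordinary Ramsey property for classes containing non-finitely-generated structures is the ``wrong'' one: there the stabilisers can fail to be open and the correspondence genuinely breaks. I would establish this reformulation either by citing it and remarking that the usual proof --- through the greatest ambit of $G$, i.e.\ its Samuel compactification --- only ever touches finitely generated substructures and so is indifferent to the cardinality of $\M$; or, in the spirit of \Cref{sec:infinitary-logic}, by first replacing $\M$ with a tame infinitary expansion $\M^{\ast}$, which has the same underlying set (hence literally the same topological automorphism group) and for which $\Age(\M^{\ast})$ is a finitary Ramsey class exactly when $\C$ is, thereby reducing to a relational, quantifier-eliminating setting where the reformulation is already available.

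Granting the reformulation, $(2)\Rightarrow(1)$ is short. Fix finitely generated $A\leq B\leq\M$, a finite set $\mathcal{F}$ of copies of $A$ in $B$, a $k$, and a colouring $c$ as above. Applying the finitary Ramsey Property to the embedding $A\hookrightarrow B$, the finite set $\mathcal{F}$, and $k$ yields some $C\in\C$ witnessing the relevant partition relation. Since $C\in\Age(\M)$ and $\M$ is ultrahomogeneous, the fixed embedding $B\hookrightarrow\M$ extends to an embedding $C\hookrightarrow\M$, so we may take $B\leq C\leq\M$; restricting $c$ to the copies of $A$ in $C$ and invoking the partition relation gives an embedding $e\colon B\to C$ on which $c$ is constant along $\mathcal{F}$. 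Extending the isomorphism $B\to e(B)$ to an automorphism $g$ of $\M$ (so that $g\restriction B = e$) provides the $g$ demanded by the reformulation, so $G$ is extremely amenable.

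For $(1)\Rightarrow(2)$ I would argue by contraposition. Suppose $\C$ fails the finitary Ramsey Property, witnessed by finitely generated $A\hookrightarrow B$ (realised as substructures of $\M$), a finite set $\mathcal{F}$ of copies of $A$ in $B$, and a $k$, so that every $C\in\C$ admits a ``bad'' $k$-colouring of its copies of $A$ --- one with no embedding of $B$ that is constant along $\mathcal{F}$. The finitely generated substructures of $\M$ containing $B$ form a directed family with union $\M$, so the copies of $A$ in $\M$ are the directed union of the corresponding sets for those $C$; choosing a bad colouring for each and amalgamating them via an ultrafilter on the family --- which works because $k$ and $\mathcal{F}$ are finite, so ``badness'' at any given copy of $B$ is a closed condition on colourings --- produces a single $k$-colouring $c$ of the copies of $A$ in $\M$ with no copy of $B$ constant along $\mathcal{F}$. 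By the reformulation, $G$ is not extremely amenable.

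\emph{Where the difficulty lies.} Two points carry the weight. The first is the dynamical reformulation itself in this generality (uncountable, not necessarily relational $\M$): one must check that dropping countability does not disturb the greatest-ambit argument, and it is here that I would most want the infinitary-expansion machinery of \Cref{sec:infinitary-logic}, to reduce to a setting where the reformulation is on record. The second is the compactness/ultrafilter step in $(1)\Rightarrow(2)$, which goes through precisely because the finitary Ramsey Property is framed so that the failure of a partition relation is detected by finitely much data --- the design choice behind \Cref{def:f-ERP} --- and is the step I would expect to take the most care to phrase cleanly. The remaining ingredients (the extension properties of ultrahomogeneous structures, and the dictionary between embeddings into $\M$ and automorphisms of $\M$) are routine.
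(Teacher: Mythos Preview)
Your approach is essentially sound, but the paper takes a much shorter route that you mention only in passing. Rather than re-running the KPT argument with f-ERP in place of ERP, the paper simply passes to the quantifier-free type isolator $\M_{\mathsf{iso}}$ (\Cref{def:isolator}): since $\M_{\mathsf{iso}}$ is relational and hence locally finite, f-ERP for $\M$ is literally ERP for $\M_{\mathsf{iso}}$, and since $\Aut(\M)=\Aut(\M_{\mathsf{iso}})$ as topological groups (\Cref{rmk:isolator}), extreme amenability transfers for free. The whole proof is then a single sentence invoking the classical KPT correspondence (\Cref{thm:kpt}) for $\M_{\mathsf{iso}}$. You do raise this reduction in your ``in the spirit of \Cref{sec:infinitary-logic}'' aside, but you treat it as a device for justifying the dynamical reformulation, whereas for the paper it \emph{is} the entire proof --- neither the greatest-ambit argument nor the ultrafilter step is ever touched. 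One point of slippage in your write-up: f-ERP (\Cref{def:f-ERP}) is stated for \emph{finite subsets} $A,B$, not finitely generated substructures with a side parameter $\mathcal{F}$, so your ``apply the finitary Ramsey Property to $A\hookrightarrow B$ and $\mathcal{F}$'' needs to be recast via finite generating tuples before it matches the definition; this is repairable but should be said. Your direct route has the merit of being self-contained and making visible where countability is (not) needed; the paper's route is far cleaner but outsources that issue to whichever version of KPT one cites for $\M_{\mathsf{iso}}$.
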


\subsection*{Structure of the paper} In \Cref{sec:infinitary-logic} we introduce some of the main new machinery that will be used in the remainder of the paper. Then, in \Cref{sec:ramsey classes,sec:gen inds} we cover the basic concepts in structural Ramsey theory and the theory of generalised indiscernibles, respectively. In \Cref{sec:existence} we discuss $\L_{\infty,0}$-orderability as a necessary and sufficient condition for the existence of $\Ncal$-indexed indiscernibles in (sufficiently saturated) models of arbitrary theories. Then, in \Cref{sec:removing ass} we combine all the previous results to prove \Cref{rc mp intro,rc qford intro}. In \Cref{sec:local-finiteness} we discuss classes that no longer necessarily contain finite structures, the finitary Ramsey Property, and prove \Cref{thm:fERP-intro}. Finally, \Cref{sec:questions} contains some closing remarks on further lines of research. 

\subsection*{Background and notation} We assume that the reader is familiar with basic model theory. We recommend \cite[Chapters 1 and 2]{Hodges_1993}, for a general background in model theory, and \cite[Chapter 5.1]{TentZiegler_2012}, for an introduction to indiscernibles. 

\begin{notation*}
    Our notation throughout this paper is standard. Throughout, $\L,\L^\prime$ will denote first-order languages. Typically $\M,\Ncal,\dots$ will denote infinite structures (usually in the languages $\L$ and $\L^\prime$, respectively) with domain $M,N,\dots$, and $A,B,\dots$ will denote finite(ly generated) structures. Given an $\L$-structure $\M$, a tuple $\bar a\in M^n$ and a subset $A\subseteq M$, we denote by $\tp_\M(\bar a/A)$ the type of $\bar a$ over $A$ in $\M$. If $A=\emptyset$, then we just write $\tp_\M(\bar a)$ for $\tp_\M(\bar a/\emptyset)$. We denote by $\qftp_\M(\bar a)\subseteq\tp_\M(\bar a)$ the quantifier-free type of $\bar a$ in $\M$, that is, the set of all quantifier-free $\L$-formulas $\phi(\bar x)$ such that $\M\vDash\phi(\bar a)$. If $\Delta\subseteq\L$ is a set of formulas in free variables from $\bar x$ we write $\tp_\M^\Delta(\bar a)$ for the restriction of $\tp_\M(\bar a)$ to formulas and negations of formulas in $\Delta$, that is, $\tp^\Delta_\M(\bar a) = \{\phi(\bar x)\in \tp_\M(\bar a): \phi(\bar x) \in\Delta\text{ or }\lnot\phi(\bar x)\in\Delta\}$. In general, when we say type (resp. quantifier-free type), we mean, unless otherwise stated, a \emph{complete} type (resp. quantifier-free type).
    
    Unless otherwise stated, when we talk of definable/$\L$-definable sets we mean sets definable \emph{without parameters}. The same convention applies when we discuss (quantifier-free) types. 
\end{notation*} 

\subsection*{Acknowledgements} The research presented in this paper was ignited during Unimod 2022 at the School of Mathematics at the University of Leeds. The authors would like to thank the organisers of the very well-organised programme, the School of Mathematics for its hospitality during the programme, and I. Kaplan for his mini-course which sparked the ideas that evolved into the research presented in this paper. The authors would also like to thank the second author's PhD supervisors P. Eleftheriou and D. Macpherson for their helpful comments.

\section{Expansions in Infinitary Logic}\label{sec:infinitary-logic}

In this \namecref{sec:infinitary-logic}, we introduce our main new machinery, \emph{infinitary Morleyisations} but postpone the actual results to later sections. First, recall that for a language $\L$ and (not necessarily infinite) cardinals $\kappa,\lambda$, we denote by $\L_{\kappa,\lambda}$ the collection of $\L$-formulas of the form:
\[
\underbrace{(Q_1\bar x),\dots,(Q_\alpha\bar x)}_{\alpha<\lambda} \bigwedge_{\beta<\kappa}\left(\bigvee_{\gamma<\kappa}\vphi_{\beta,\gamma}(\bar x)\right),
\]
where each $Q_i$ is a first-order quantifier (i.e. $\forall$ or $\exists$), and each $\vphi_{\beta,\gamma}(\bar x)$ is an $\L$-formula, whose variables $\bar x$ are some finite subtuple of $(x_1,\dots,x_{\alpha})$. By replacing $\kappa$ (resp. $\lambda$) by $\infty$ we mean that the number of conjunctions/disjunctions (resp. quantifiers) is unbounded. In this notation, traditional first-order logic is only concerned with $\L_{\omega,\omega}$-formulas, and $\L_{\infty,0}$ refers to quantifier-free formulas consisting of arbitrarily long conjunctions/disjunctions.

\subsection{Infinitary Morleyisations}\label{sec:infinitary morleyisation}
Modern model theory tends to focus not on the syntactical properties of the definable sets in a given structure (e.g. if they are definable using quantifier-free formulas) but rather on classifying structures, based on various notions of ``tameness'' and ``wildness''. In particular, towards this, it often is easier to work in expansions of a given structure, in which definable sets are easier to describe, but the complexity of the lattice of definable sets remains unchanged (and hence so do the notions of ``tameness'' and ``wildness''). The prototypical example of this process is that of \emph{Morleyisation}, by which we mean the expansion $\widehat{\M}$ of an $\L$-structure $\M$ to a language $\widehat{\L}\supseteq\L$, containing for each $\L$-definable $X\subseteq M^n$ an $n$-ary relation symbol $R_X$, where we naturally interpret the new relation symbols as follows:
\[
\widehat{\M}\vDash R_X(\bar a) \text{ if, and only if, } \bar a\in X,
\]
for all $\L$-definable $X\subseteq M^n$ and all $\bar a\in M^n$. 

In the resulting structure $\widehat{\M}$ all $\L$-definable sets can be defined by quantifier-free formulas, but nonetheless, the actual $\L$-definable sets of the structure remain unchanged. It is an easy observation that many important model-theoretic properties of $\M$ (e.g. categoricity, stability, NIP, and more) are preserved by Morleyisations.

We introduce a more general form of this construction, which as we will show remains ``tame'' in some very well defined sense, but does behave significantly differently than the usual Morleyisation, from the point of view of its first-order theory, and hence from the point of view of Shelah-style classification theory. We start with the most general possible definition:

\begin{definition}
    Let $\kappa,\lambda$ be (not necessarily infinite) cardinals, or $\infty$. We define the \emph{$\L_{\kappa,\lambda}$-Morleyisation} of $\M$ to be the expansion $\widehat{\M}$ of $\M$ to the language:
    \[
    \widehat{\L} := \L\cup \Set{R_X : X \text{ is an $\L_{\kappa,\lambda}$-definable subset of }\M^n, \text{ for } n\in\N},
    \] 
    where we naturally interpret the new relation symbols as follows:
    \[
    \widehat{\M}\vDash R_X(\bar a)\text{ if, and only if } \bar a\in X,
    \]
    for every $\L_{\kappa,\lambda}$-definable subset $X$ of $\M$ and every $\bar a$ in $\M$.
\end{definition}

\begin{remark}
    In our notation, it is clear that the usual Morleyisation we described previously is just the $\L_{\omega,\omega}$-Morleyisation.
\end{remark}

We will be focusing our attention on a special kind of $\L_{\kappa,\lambda}$-Morleyisation, namely the \emph{$\L_{\infty,0}$-Morleyisation}. More precisely, we will be concerned with two special reducts of the $\L_{\infty,0}$-Morleyisation and  we dedicate the following two subsections to these reducts.

\subsection{Quantifier-Free type Morleyisation}\label{sec:QF Type Morleyisation}
It will turn out to be the case that adding predicates for all $\L_{\infty,0}$-definable sets is a somewhat complicated procedure, and for the purposes of discussing generalised indiscernibles, one need no more than the preservation and reflection of equality of quantifier-free types, precisely as stated in \Cref{qf-type-Morleyzation-same-types,qf-isolator-same-types}. This is achieved in the following way:

\begin{definition}\label{def:qf-type-Morleyisation}
    Let $\M$ be an $\L$-structure. The \emph{quantifier-free type Morleyisation} of $\M$ is the expansion $\widehat{\M}_\mathsf{qfi}$ of $\M$ to the language 
    \[
    \widehat{\L}_\mathsf{qfi} := \cL\cup \Set{R_p : p \text{ is a quantifier-free type realised in }\M},
    \] 
    where we interpret the new relation symbols in the natural way, that is:
    \[
    \widehat{\M}_\mathsf{qfi}\vDash R_p(\bar{a}) \text{ if, and only if } \M\vDash p(\bar{a}),
    \]
    for every $\bar{a}\in \M$ and every quantifier-free type $p$ that is realised in $\M$. 
\end{definition}

An obvious remark about the quantifier-free type Morleyisation (and more generally about the $\L_{\infty,0}$-Morleyisation), which will be relevant later, is that equality of quantifier-free types does not change when one moves from $\M$ to $\widehat{\M}_\mathsf{qfi}$ (or $\widehat{\M}$). More precisely:

\begin{remark}\label{qf-type-Morleyzation-same-types}
    Let $\M$ be an $\L$-structure. Then, for all tuples $\bar{a},\bar{b}$ from $\M$ we have that:
    \[
        \qftp_{\widehat{\M}_\mathsf{qfi}}(\bar{a})=\qftp_{\widehat{\M}_\mathsf{qfi}}(\bar{b})\text{ if, and only if, }\qftp_{\M}(\bar{a})=\qftp_{\M}(\bar{b}).
    \]
\end{remark}
 
\subsection{Quantifier-Free type Isolators} The conclusion of \Cref{qf-type-Morleyzation-same-types} is a crucial condition for our applications. It is often, though, convenient to work with relational structures, rather than arbitrary ones. To this end, we consider the smallest relational reduct of the quantifier-free type Morleyisation of an $\L$-structure. More precisely:

\begin{definition}\label{def:isolator}
    Let $\M$ be an $\L$-structure. The \emph{quantifier-free type isolator}, or the \emph{$\L_{\infty,0}$-isolator} of $\M$ is the reduct, $\M_{\mathsf{iso}}$, of $\widehat{\M}_\mathsf{qfi}$ to the language $\L_\mathsf{iso} = \widehat{\L}_\mathsf{qfi}\setminus\L$, where $\widehat{\L}_\mathsf{qfi}$ is the language of $\widehat{\M}_\mathsf{qfi}$.
\end{definition}

Explicitly, in the notation above, $\L_{\mathsf{iso}}$ is such that for every $\bar{a}\in\M$, if $p = \qftp_{\M}(\bar{a})$, then there is a unique relation symbol $R_p\in \L_\mathsf{iso}$ such that for every $\bar{b}\in \M$:
\[
    \M_{\mathsf{iso}}\vDash R_p(\bar{b})\text{ if, and only if, } \qftp_{\M}(\bar{b}) = p.
\]

Of course, as commented on earlier, it is easy to see that the analogue of \Cref{qf-type-Morleyzation-same-types} still holds for quantifier-free type isolators. We write it down explicitly, for bookkeeping purposes:

\begin{remark}\label{qf-isolator-same-types}
    Let $\M$ be an $\L$-structure. Then, for all tuples $\bar{a},\bar{b}$ from $\M$ we have that:
    \[
    \qftp_{\M_\mathsf{iso}}(\bar{a})=\qftp_{\M_\mathsf{iso}}(\bar{b})\text{ if, and only if, }\qftp_{\M}(\bar{a})=\qftp_{\M}(\bar{b}).
    \]
\end{remark}

In the next sections, we will utilise this machinery to prove stronger versions of Scow's theorem, as discussed in the introduction. We will also make use of the quantifier-free type isolator construction, to prove a variant of the Kechris-Pestov-Todorcevic correspondence, as discussed in \Cref{sec:introduction}.
 
\section{Background: Ramsey Classes}\label{sec:ramsey classes}

In this \namecref{sec:ramsey classes} we recall the basics of structural Ramsey theory. Given $\L$-structures $A,B$ we write $\binom{A}{B}$ for the set of all embeddings of $B$ into $A$. Sometimes this notation is used to denote the set of all isomorphic copies of $B$ in $A$. Of course, in general, these two sets need not be equal. Nevertheless, we will soon show that in our context (i.e., that of classes with the \emph{Embedding Ramsey Property}, see \Cref{def:hp-jep-ap-erp}(4)), all structures are \emph{rigid} justifying our choice of terminology. (Recall: An $\L$-structure $\M$ is called \emph{rigid} if it has no non-trivial automorphisms.) 

\begin{definition}[Structural Erd\H{o}s-Rado Partition Arrow]
	Let $A,B,C$ be $\L$-structures (and without loss of generality assume that $A\subseteq B\subseteq C$) and let $r\in\N$. We write:
	\[
	{\arrows CB}^A_r
	\] 
	if for each colouring $\chi:\binom{C}{A}\to r = \Set{0,\dots,r-1}$ there exists some $\tilde B\in\binom{C}{B}$ such that $\chi\restr{\binom{\tilde B}{A}}$ is constant.
\end{definition}

\begin{definition}[HP, JEP, AP, ERP]\label{def:hp-jep-ap-erp}
    Let $\L$ be a first-order language and $\C$ a class of $\L$-structures. We say that $\C$ has the:
	\begin{enumerate}[(1)]
		\item \emph{Hereditary Property} (HP) if whenever $A\in\C$ and $B\subseteq A$ we have that $B\in\C$.
		\item \emph{Joint Embedding Property} (JEP) if whenever $A,B\in\C$ there is some $C\in\C$ such that both $A$ and $B$ are embeddable in $C$.
		\item \emph{Amalgamation Property} (AP) if whenever $A,B,C\in\C$ are such that $A$ embeds into $B$ via $e:A\into B$ and into $C$ via $f:A\into C$ there exist a $D\in\C$ and embeddings $g:B\into D$, $h:C\into D$ such that $g\circ e = h\circ f$.
		\item \emph{Embedding Ramsey Property} (ERP) if whenever $A,B\in\C$ are such that $A\subseteq B$, then there is some $C\in\C$ such that ${\arrows C B}^A_2$.
	\end{enumerate}
\end{definition}

\begin{remark}\label{erp all colours}
By a standard induction argument, if $\C$ has ERP then for all $r\in\N$ and all $A,B\in\C$ such that $A\subseteq B$, there is some $C\in\C$ such that ${\arrows C B}^A_r$.
\end{remark}

\begin{definition}[Ramsey Class]\label{def:rc}
    We say that an isomorphism-closed class of finite $\L$-structures, $\C$, is a \emph{Ramsey Class} if it has HP, JEP, and ERP.
\end{definition}

In \Cref{sec:local-finiteness} we generalise \Cref{def:rc} to classes of not necessarily finite structures.

\begin{example}
    In this updated terminology, Ramsey's theorem says that the class of all finite linear orders is a Ramsey class. 
\end{example}

We recall the following theorem of  Ne\v{s}et\v{r}il.
\begin{fact}[{\cite[Theorem 4.2(i)]{Nesetril_2005}}]\label{RC implies AP}
    Let $\C$ be a Ramsey class. Then $\C$ has AP.
\end{fact}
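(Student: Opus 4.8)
The statement is Ne\v{s}et\v{r}il's theorem, and the plan is to reprove it by a direct application of ERP that contradicts a hypothetical failure of amalgamation. The starting point is a reformulation of AP purely in terms of embeddings: I claim $\C$ has AP if, and only if, whenever $A,B_1,B_2\in\C$ with embeddings $\iota_i\colon A\into B_i$ ($i=1,2$), there is some $D\in\C$ and an embedding $j\colon A\into D$ that \emph{splits}, meaning $j=g\circ\iota_1$ for some embedding $g\colon B_1\into D$ and simultaneously $j=h\circ\iota_2$ for some embedding $h\colon B_2\into D$. One direction is immediate: an amalgam $(D,g,h)$ of $B_1,B_2$ over $A$ is such a $D$ (with $j=g\circ\iota_1=h\circ\iota_2$). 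For the converse, given a splitting $j$, the substructure $D'\subseteq D$ generated by $g(B_1)\cup h(B_2)$ lies in $\C$ by HP (\Cref{def:hp-jep-ap-erp}(1)), and $(D',g,h)$ is then an amalgam. So I may assume towards a contradiction that AP fails, and fix a witness $A,B_1,B_2$: \emph{no} $D\in\C$ admits a splitting copy of $A$ with respect to $\iota_1,\iota_2$.

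Next, by JEP (\Cref{def:hp-jep-ap-erp}(2)) choose $C\in\C$ together with embeddings $b_1\colon B_1\into C$ and $b_2\colon B_2\into C$. By ERP (\Cref{def:hp-jep-ap-erp}(4)) choose $D\in\C$ with ${\arrows D C}^A_2$. Define a colouring $\chi\colon\binom{D}{A}\to 2$ by declaring $\chi(a)=1$ if the embedding $a\colon A\into D$ extends to an embedding of $B_2$ into $D$ along $\iota_2$ — i.e. $a=h\circ\iota_2$ for some embedding $h\colon B_2\into D$ — and $\chi(a)=0$ otherwise.

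Now let $\tilde C\in\binom{D}{C}$ be a copy of $C$ on which $\chi$ is constant over $\binom{\tilde C}{A}$, which exists by choice of $D$. Consider the two copies of $A$ sitting inside $\tilde C$, namely $a_1:=\tilde C\circ b_1\circ\iota_1$ and $a_2:=\tilde C\circ b_2\circ\iota_2$; both lie in $\binom{\tilde C}{A}$. Then $\chi(a_2)=1$, witnessed by $h:=\tilde C\circ b_2\colon B_2\into D$. By monochromaticity $\chi(a_1)=1$ as well, so $a_1$ extends along $\iota_2$ to an embedding $B_2\into D$; but $a_1$ also extends along $\iota_1$ to $g:=\tilde C\circ b_1\colon B_1\into D$. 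Hence $a_1$ is a splitting copy of $A$ in $D$, contradicting the choice of $A,B_1,B_2$. Therefore AP holds.

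The main thing to get right is the reformulation in the first paragraph — recognising that ``an amalgam'' is exactly ``a structure carrying a copy of $A$ that extends to copies of both $B_i$'', and that HP is what lets the generated substructure of such a configuration land back in $\C$; once this invariant (``extends along $\iota_2$ to a copy of $B_2$'') is isolated, the Ramsey step is a one-line colouring argument. The remaining care is purely bookkeeping: keeping embeddings rather than isomorphism-closed copies straight, and noting that two colours already suffice, so that \Cref{erp all colours} is not needed here. I also note that this argument sidesteps rigidity entirely; one could alternatively first establish that members of a class with ERP are rigid and then phrase everything in terms of subconfigurations, but this is not necessary for the proof.
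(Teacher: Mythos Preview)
Your proof is correct and is essentially the standard Ne\v{s}et\v{r}il argument. Note, however, that the paper does not supply its own proof of this statement: it is recorded as a \emph{Fact} with a citation to \cite[Theorem~4.2(i)]{Nesetril_2005}, so there is nothing to compare against beyond the literature. One small remark: in your reformulation paragraph, passing to the substructure $D'$ generated by $g(B_1)\cup h(B_2)$ is unnecessary, since $D$ itself already lies in $\C$ and $(D,g,h)$ is an amalgam as it stands; the appeal to HP is harmless but superfluous.
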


A natural generalisation of ERP comes from the notion of \emph{Ramsey degrees}, which roughly says that even if we cannot always find monochromatic copies of our structures, we have some ``global'' control over the number of colours that a structure can take. More precisely:

\begin{definition}
    Let $\C$ be a class of $\L$-structures and $A\in\C$. Given $d\in\N$, we say that $A$ has \emph{Ramsey degree $d$ (in $\C$)} if $d$ is the least positive integer such that for any $B\in\C$ with $A\subseteq B$ there is some $C\in\C$ such that $B\subseteq C$ and for any $r\in\N$ and any colouring $\chi: \binom{C}{A}\to r$ there exists some $\tilde B\in\binom{C}{B}$ such that $|\Im(\chi\restr{\binom{\tilde B}{A}})| \leq d$.  
\end{definition}

We note here that often, in literature, this notion is referred to sometimes as the \emph{small} Ramsey degree, to distinguish between so-called \emph{big} Ramsey degrees, see, for instance, \cite{Masulovic_2021} for a relevant discussion.

\begin{remark}\label{rmk:erp-iff-rd-is-1}
    In these terms, $\C$ has ERP if, and only if, for all $A\in\C$ we have that $A$ has Ramsey degree $1$.
\end{remark}

\begin{fact}[{\cite[Lemma~$2.9$, and Corollary~$2.10$]{Bodirsky_2015}}]\label{ramsey classes have rigid elts}
    Let $\C$ be a class of finite structures. Then, for all $A\in\C$, the Ramsey degree of $A$ is at least $|\Aut(A)|$. In particular, if $\C$ has ERP, then all members of $C$ are rigid.
\end{fact}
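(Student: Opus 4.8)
The plan is to force the Ramsey degree of a given $A\in\C$ to be large by exhibiting, for the trivial choice $B:=A$ of the middle structure, a colouring that is maximally ``spread out'' on every copy of $A$ inside any ambient $C$. Before constructing it, I would record two elementary observations, both using finiteness of $A$ in an essential way. First, if $C\in\C$ with $A\subseteq C$ and $\tilde A\subseteq C$ is (the underlying substructure of) a copy of $A$, then since $A$ is finite every embedding $A\hookrightarrow\tilde A$ is automatically onto $\tilde A$; hence $\binom{\tilde A}{A}=\Set{f\in\binom{C}{A} : f(A)=\tilde A}$. Second, the relation ``$f$ and $g$ have the same image'' partitions $\binom{C}{A}$ into classes, each of which is a torsor over $\Aut(A)$: fixing any $f_0\in\binom{C}{A}$ with image $\tilde A$, the map $\sigma\mapsto f_0\circ\sigma$ is a bijection from $\Aut(A)$ onto $\Set{f\in\binom{C}{A} : f(A)=\tilde A}$. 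In particular every such class has size exactly $|\Aut(A)|$ and, by the first observation, equals $\binom{\tilde A}{A}$ for its common image $\tilde A$.

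Then I would build the colouring. Set $k:=|\Aut(A)|$, fix an arbitrary $C\in\C$ with $A\subseteq C$, and note that the image-classes partition $\binom{C}{A}$ into blocks of size exactly $k$. Choose $\chi\colon\binom{C}{A}\to\Set{0,\dots,k-1}$ so that its restriction to each image-class is a bijection onto $\Set{0,\dots,k-1}$; this is legitimate because the classes are pairwise disjoint and equinumerous, so the choices on distinct classes are independent. Now for any $\tilde B\in\binom{C}{B}=\binom{C}{A}$ with image $\tilde A$, the first observation gives $\binom{\tilde B}{A}=\binom{\tilde A}{A}=\Set{f\in\binom{C}{A} : f(A)=\tilde A}$, a single image-class, so $\chi\restr{\binom{\tilde B}{A}}$ is a bijection onto $\Set{0,\dots,k-1}$ and $|\Im(\chi\restr{\binom{\tilde B}{A}})|=k=|\Aut(A)|$.

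Putting it together: for $B=A$, every $C\in\C$ with $A\subseteq C$ admits (with $r=k$ colours) a colouring on which every copy $\tilde B$ of $A$ receives all $k$ colours; hence no positive integer $d<|\Aut(A)|$ can witness a Ramsey degree for $A$, i.e. the Ramsey degree of $A$ is at least $|\Aut(A)|$. For the ``in particular'' clause I would invoke \Cref{rmk:erp-iff-rd-is-1}: if $\C$ has ERP then every $A\in\C$ has Ramsey degree $1$, so $1\ge|\Aut(A)|$ and $A$ is rigid. I do not anticipate a serious obstacle: the only points demanding care are the two finiteness observations above --- in particular that an infinite structure may embed properly into itself, so finiteness is genuinely used --- together with the routine check that the image-partition colouring is well-defined precisely because its blocks are disjoint and of common size $|\Aut(A)|$.
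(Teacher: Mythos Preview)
Your argument is correct and is precisely the standard proof: partition $\binom{C}{A}$ by image, observe each block is an $\Aut(A)$-torsor of size $k=|\Aut(A)|$, and colour each block bijectively with $k$ colours so that taking $B=A$ forces every $\tilde B\in\binom{C}{A}$ to see all $k$ colours on $\binom{\tilde B}{A}$. The only delicate identifications---that $\binom{\tilde B}{A}$ coincides with the image-class of $\tilde B$, and that this uses finiteness of $A$---you handle explicitly.

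Note, however, that the paper does not supply its own proof of this statement: it is recorded as a \emph{Fact} with a bare citation to \cite[Lemma~2.9 and Corollary~2.10]{Bodirsky_2015}, so there is nothing in the paper to compare your approach against. Your write-up is essentially the argument one finds in the cited source.
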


The following lemma about Ramsey classes will become useful later on:

\begin{lemma}\label{Ramsey-many-degs}
    Let $\C$ be a class of $\L$-structures. Let $A_1,\dots A_n\in \C$, be structures contained in some $B\in\C$, with $A_i$ having Ramsey degree $d_i$, for all $i\leq n$. Then, there is some $C\in\C$ such that $B\subseteq C$ and for all $r_i\in\N$ and all sets of colourings $\Set{\chi_i:\binom{C}{A_i}\to r_i:i\leq n}$, there is some $\tilde B\in\binom{C}{B}$ such that $|\Im(\chi_i\restr{\binom{\tilde B}{A_i}})|\leq d_i$, for all $i\leq n$.
\end{lemma}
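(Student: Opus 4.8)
The plan is to prove \Cref{Ramsey-many-degs} by iterating the single-structure Ramsey degree property $n$ times, peeling off one colouring at a time while passing to progressively larger structures in $\C$. The key observation is that the definition of Ramsey degree is robust under enlarging the "ambient" structure: if $B \subseteq C$ and $C$ already witnesses the Ramsey degree bound for $A$ with target $B$, then any $C' \in \C$ with $C \subseteq C'$ also does, because a colouring of $\binom{C'}{A}$ restricts to a colouring of $\binom{C}{A}$, and a monochromatic-up-to-$d$ copy $\tilde B \in \binom{C}{B} \subseteq \binom{C'}{B}$ found inside $C$ serves equally well inside $C'$. (Strictly, one should also note $\binom{\tilde B}{A}$ is computed the same way regardless of the ambient structure, since embeddings of $A$ into $\tilde B$ depend only on $\tilde B$.)

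First I would set up the iteration. Put $C_0 := B$. Given $C_{i-1} \in \C$ with $B \subseteq C_{i-1}$, apply the definition of Ramsey degree of $A_i$ (which is $d_i$) to the pair $A_i \subseteq C_{i-1}$: this yields some $C_i \in \C$ with $C_{i-1} \subseteq C_i$ such that for every $r \in \N$ and every colouring $\chi \colon \binom{C_i}{A_i} \to r$ there is $\tilde C_{i-1} \in \binom{C_i}{C_{i-1}}$ with $|\Im(\chi \restr{\binom{\tilde C_{i-1}}{A_i}})| \leq d_i$. Here I use that $A_i \subseteq B \subseteq C_{i-1}$, so $A_i \subseteq C_{i-1}$ is a legitimate instance of the definition. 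Set $C := C_n$. By transitivity $B = C_0 \subseteq C_1 \subseteq \dots \subseteq C_n = C$, all in $\C$.

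Now I would verify that $C$ works. Given colourings $\chi_i \colon \binom{C}{A_i} \to r_i$ for $i \leq n$, I process them in the order $i = n, n-1, \dots, 1$. Since $C_n = C$ and $C_{n-1} \subseteq C_n$, restricting $\chi_n$ to $\binom{C_n}{A_n}$ and applying the property of $C_n$ gives an embedding $\tilde C_{n-1} \in \binom{C}{C_{n-1}}$ with $|\Im(\chi_n \restr{\binom{\tilde C_{n-1}}{A_n}})| \leq d_n$. Transport everything inside this copy: working within the substructure $\tilde C_{n-1} \cong C_{n-1}$, pull back $\chi_{n-1}$ along the isomorphism to a colouring of $\binom{C_{n-1}}{A_{n-1}}$, apply the property of $C_{n-1}$ to extract a copy of $C_{n-2}$ with the $d_{n-1}$ bound, push it back into $\tilde C_{n-1}$, and continue. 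After $n$ steps one obtains $\tilde B \in \binom{C}{B}$ sitting inside a nested chain of copies of the $C_i$'s; since taking a further substructure can only shrink the image of each colouring's restriction, $|\Im(\chi_i \restr{\binom{\tilde B}{A_i}})| \leq d_i$ holds for every $i \leq n$ simultaneously.

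The main obstacle — really the only subtle point — is bookkeeping the isomorphisms: at stage $i$ one is not literally colouring embeddings into $C_{i-1}$ but into an isomorphic copy $\tilde C_{i-1} \subseteq C$, so one must transport the colouring $\chi_{i-1}$ and also observe that the resulting bound survives being restricted to the even smaller copies produced at later stages. This is a routine but slightly tedious induction; to streamline it, I would prove the following auxiliary claim by downward induction on $i$ from $n$ to $0$: there is $\tilde C_i \in \binom{C}{C_i}$ such that $|\Im(\chi_j \restr{\binom{\tilde C_i}{A_j}})| \leq d_j$ for all $j$ with $i < j \leq n$. The base case $i = n$ is trivial ($\tilde C_n = C$, empty condition), the inductive step is exactly the single application of the Ramsey-degree property of $C_i$ inside $\tilde C_{i}$ together with the monotonicity-under-restriction remark, and the case $i = 0$ gives the desired $\tilde B = \tilde C_0$, since $C_0 = B$. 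No feature specific to finite or relational structures is used, so the lemma holds at the stated level of generality.
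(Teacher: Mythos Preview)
Your proof is correct and follows essentially the same approach as the paper's: both iterate the single-structure Ramsey degree property, building a chain $B=C_0\subseteq C_1\subseteq\cdots\subseteq C_n=C$ (the paper packages this as induction on $n$, you unroll it explicitly), and then extract the desired copy of $B$ by peeling off one colouring at a time, using that passing to a smaller substructure can only shrink the image of each restricted colouring. The only cosmetic difference is that the paper phrases the inductive hypothesis as the existence of a single witness $C_0$ for $A_1,\dots,A_n$ and then applies the Ramsey degree of $A_{n+1}$ once more, whereas you make the chain and the downward extraction explicit.
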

\begin{proof}
    The proof is by induction on $n$. The base case is precisely the definition of $A_1$ having Ramsey degree $d_1$ in $\C$. For the inductive step, assume that we are given $A_1,\dots,A_{n+1},B\in\C$ such that $A_i\subseteq B$ and the Ramsey degree of $A_i$ is $d_i$, for all $i\leq n+1$. By the inductive hypothesis, there is some $C_0\in\C$ such that for all $r_i\in\N$, $i\leq n$, and all sets of colourings $\Set{\chi_i:\binom{C_0}{A_i}\to r_i:i\leq n}$, there is some $\tilde B\in\binom{C_0}{B}$ such that $|\Im(\chi_i\restr{\binom{\tilde B}{A_i}})|\leq d_i$, for all $i\leq n$. 
    
    Now, since $A_{n+1}$ has Ramsey degree $d_{n+1}$ in $\C$, there is some $C\in\C$ such that for all $r\in\N$ and all colourings $\chi: \binom{C}{C_0}\to r$ there exists some $\tilde C_0\in\binom{C}{C_0}$ such that $|\Im(\chi\restr{\binom{\tilde C_0}{A_{n+1}}})| \leq d_{n+1}$.
    
    We claim that this $C$ is the required structure. Indeed, suppose that we are given $r_i\in\N$, $i\leq n+1$, and a set of colourings $\Set{\chi_i:\binom{C}{A_i}\to r_i:i\leq n+1}$. Let $\tilde C_0\in\binom{C}{C_0}$ be such that $|\Im(\chi\restr{\binom{\tilde C_0}{A_{n+1}}})| \leq d_{n+1}$. Then by induction, there is some $\tilde B\in\binom{\tilde C_0}{B}$ such that $|\Im(\chi_i\restr{\binom{\tilde B}{A_i}})|\leq d_i$, for all $i\leq n$. But since $\binom{\tilde B}{A_{n+1}}\subseteq\binom{\tilde C_0}{A_{n+1}}$, we must have that:
    \[
    \left|\Im\left(\chi_{n+1}\restr{\binom{\tilde B}{A_{n+1}}}\right)\right|\leq \left|\Im\left(\chi\restr{\binom{\tilde C_0}{A_{n+1}}}\right)\right| \leq d_{n+1},
    \] 
    as required.
\end{proof}
As an immediate corollary of \Cref{rmk:erp-iff-rd-is-1} and \Cref{Ramsey-many-degs} we obtain the following useful fact:

\begin{corollary}\label{Ramsey many}
    Let $\C$ be a Ramsey class, and $A_1,\dots A_n\in \C$, be structures contained in some $B\in\C$. Then, there is some $C\in\C$ such that for all sets of colourings $\Set{\chi_i:\binom{C}{A_i}\to 2:i\leq n}$, there is some $\tilde B\in\binom{C}{B}$ such that $\chi_i\restr{\binom{\tilde B}{A_i}}$ is constant for all $i\leq n$.
\end{corollary}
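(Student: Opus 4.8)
The plan is to simply combine the two results cited right before the statement. Since $\C$ is a Ramsey class, it has ERP, so by \Cref{rmk:erp-iff-rd-is-1} every member of $\C$ — in particular each of $A_1,\dots,A_n$ — has Ramsey degree $1$ in $\C$. I would then invoke \Cref{Ramsey-many-degs} with $B$ as given and with $d_i = 1$ for all $i\leq n$: this yields a structure $C\in\C$ with $B\subseteq C$ such that for all choices of $r_i\in\N$ and all families of colourings $\{\chi_i:\binom{C}{A_i}\to r_i : i\leq n\}$ there is a copy $\tilde B\in\binom{C}{B}$ with $|\Im(\chi_i\restr{\binom{\tilde B}{A_i}})|\leq 1$ for every $i$.

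Finally I would specialise to $r_i = 2$ for all $i$ and observe that a map whose image has size at most $1$ is constant (the only subtlety being the degenerate case $\binom{\tilde B}{A_i}=\emptyset$, in which case $\chi_i\restr{\binom{\tilde B}{A_i}}$ is vacuously constant). This is exactly the desired conclusion, so the corollary follows.

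There is no real obstacle here: the content is entirely in \Cref{Ramsey-many-degs} and \Cref{rmk:erp-iff-rd-is-1}, and the corollary is just the instantiation $d_i = 1$, $r_i = 2$. The only point worth stating explicitly in the write-up is the translation "image of size $\leq 1$" $\iff$ "constant", which is why it is phrased as a separate corollary rather than folded into the lemma.

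\begin{proof}
    Since $\C$ is a Ramsey class it has ERP, so by \Cref{rmk:erp-iff-rd-is-1} each $A_i$ has Ramsey degree $1$ in $\C$. Applying \Cref{Ramsey-many-degs} with $d_i = 1$ for all $i\leq n$, we obtain some $C\in\C$ with $B\subseteq C$ such that for all $r_i\in\N$ and all families of colourings $\Set{\chi_i:\binom{C}{A_i}\to r_i:i\leq n}$ there is some $\tilde B\in\binom{C}{B}$ with $|\Im(\chi_i\restr{\binom{\tilde B}{A_i}})|\leq 1$ for all $i\leq n$. Taking $r_i = 2$ for every $i$, a colouring whose image has at most one element is constant (vacuously so if its domain is empty), and hence $\chi_i\restr{\binom{\tilde B}{A_i}}$ is constant for all $i\leq n$, as required.
\end{proof}
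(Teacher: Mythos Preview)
Your proof is correct and follows exactly the approach indicated in the paper: the corollary is stated there as an immediate consequence of \Cref{rmk:erp-iff-rd-is-1} and \Cref{Ramsey-many-degs}, and you have simply spelled out that instantiation with $d_i=1$ and $r_i=2$.
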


\begin{definition}[Fraïssé Class]
We say that a countable class $\C$ of isomorphism types of finite $\L$-structures is a \emph{Fraïssé Class} if it has HP, JEP, and AP.
\end{definition}

Recall that for a structure $\M$ we define the \emph{age} of $\M$, denoted by $\Age(\M)$, to be the class of isomorphism types of finitely generated substructures of $\M$. We say that $\M$ is \emph{ultrahomogeneous} if every isomorphism between finitely generated substructures of $\M$ extends to an automorphism of $\M$. Given an $\L$-structure $\M$ and a subset $A\subseteq M$, we write $\langle A\rangle_\M$ for the substructure of $\M$ \emph{generated by $A$}. We say that $\M$ is \emph{locally finite} if for any finite $A\subseteq M$ we have that $\langle A\rangle_\M$ is finite.

The following celebrated theorem connects most of the notions defined above:

\begin{fact}[Fraïssé's Theorem]\label{fraisse}
    Let $\C$ be a non-empty Fraïssé class. Then there exists a unique, up to isomorphism, countable ultrahomogeneous $\L$-structure $\M$ such that $\Age(\M) = \C$.
\end{fact}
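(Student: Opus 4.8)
The plan is the classical chain-and-back-and-forth proof. The key auxiliary notion is the \emph{extension property}: an $\L$-structure $\M$ has it (relative to $\C$) if $\Age(\M)\subseteq\C$ and for every $A\subseteq B$ in $\C$ and every embedding $e\colon A\into\M$ there is an embedding $B\into\M$ extending $e$. For \textbf{existence}, I would build $\M=\bigcup_{n\in\N}A_n$ as an increasing union of members of $\C$, starting from any $A_0\in\C$. Since $\C$ is countable and all the structures in play are finite, there are only countably many ``amalgamation requests'' — triples $(A,B,\iota)$ with $A\subseteq B$ in $\C$ and $\iota$ an embedding of $A$ into one of the finite stages $A_n$ — and I would enumerate and dovetail them. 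To service $(A,B,\iota)$ with $\iota\colon A\into A_n$, apply AP to $\iota$ and to the inclusion $A\into B$ to obtain $D\in\C$ with compatible embeddings of $A_n$ and of $B$; up to isomorphism take $A_{n+1}=D\supseteq A_n$, so that $\iota$ extends to an embedding $B\into A_{n+1}$. At interleaved stages, apply JEP to $A_n$ and to the next member of a fixed enumeration of $\C$, ensuring that every member of $\C$ embeds into the chain. Then $\M$ is countable, $\Age(\M)\subseteq\C$ by HP, every member of $\C$ embeds into $\M$ by the JEP stages, and $\M$ has the extension property by the AP stages; in particular $\Age(\M)=\C$.

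Next I would show that the extension property together with countability forces ultrahomogeneity. Given an isomorphism $g_0\colon A\to A'$ between finitely generated substructures of $\M$ and an enumeration $M=\{m_0,m_1,\dots\}$, I build isomorphisms $g_0\subseteq g_1\subseteq\cdots$ between finitely generated substructures of $\M$ by recursion: at an even step, writing $C$ for the domain of $g_k$ and taking the least $m_i$ not yet in the domain, put $B=\langle C\cup\{m_i\}\rangle_\M$ and apply the extension property to $C\subseteq B$ and $g_k\colon C\into\M$ to obtain $g_{k+1}\supseteq g_k$ with $m_i$ in its domain; at odd steps do the symmetric thing for the range, applying the extension property to $g_k^{-1}$. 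Then $\bigcup_k g_k$ is an automorphism of $\M$ extending $g_0$, so $\M$ is ultrahomogeneous, which completes existence.

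For \textbf{uniqueness}, suppose $\M$ and $\Ncal$ are both countable, ultrahomogeneous, with $\Age(\M)=\Age(\Ncal)=\C$. I first note that each of them has the extension property: given $A\subseteq B$ in $\C$ and $e\colon A\into\M$, choose any embedding $j\colon B\into\M$ (possible since $B\in\C=\Age(\M)$); then $e\circ(j\restr A)^{-1}\colon j(A)\to e(A)$ is an isomorphism between finitely generated substructures of $\M$, hence extends to some $\sigma\in\Aut(\M)$, and $\sigma\circ j\colon B\into\M$ extends $e$. Now I run a back-and-forth between $\M$ and $\Ncal$, using fixed enumerations of $M$ and $N$: starting from the empty partial isomorphism (or, if $\L$ has constant symbols, from the canonical isomorphism between the substructures generated by $\emptyset$, which are isomorphic members of $\C$), I alternately use the extension property of $\Ncal$ to enlarge the domain to the next point of $M$ and the extension property of $\M$ to enlarge the range to the next point of $N$, exactly as in the previous paragraph. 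The union of the resulting chain of isomorphisms is an isomorphism $\M\iso\Ncal$.

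I do not expect a serious conceptual obstacle: the whole content is the equivalence, for countable structures, between AP, the extension property, and ultrahomogeneity. The one place that genuinely requires care is the bookkeeping in the existence construction — scheduling the amalgamation and JEP requests so that each is eventually serviced even though new requests keep appearing as the chain grows — together with the routine verification that the amalgams furnished by AP (and the bounds furnished by JEP) can, up to isomorphism, be taken to literally extend the chain built so far.
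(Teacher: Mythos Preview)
The paper does not prove this statement: it is recorded as a \emph{Fact} (i.e., a classical result quoted from the literature, see, e.g., \cite{Hodges_1993}), with no proof given. Your argument is the standard chain-plus-back-and-forth proof of Fra\"iss\'e's theorem and is correct as written; there is simply nothing in the paper to compare it against.
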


\begin{notation*}
    In the notation of \Cref{fraisse}, we will write $\Flim(\C)$ to denote the unique, up to isomorphism, countable ultrahomogeneous $\M$ such that $\Age(\M) = \C$.
\end{notation*}

Before we proceed, let us recall the central theorem of Kechris-Pestov-Todorcevic, from \cite{KechrisPestovTodorcevic_2005}, which we mentioned in the introduction. This theorem relates the topological dynamics of the automorphism group of an $\L$-structure $\M$ with ERP (in $\Age(\M)$). More precisely:

\begin{fact}[{\cite[Theorem 4.8]{KechrisPestovTodorcevic_2005}}, {\cite[Theorem 11.2.2]{Bodirsky_2021}}]\label{thm:kpt}
    Let $\C$ be a Fraïssé class, $\M=\Flim(\C)$, and $G=\Aut(\M)$. Then, the following are equivalent:
    \begin{enumerate}[(1)]
        \item The group $G$ is extremely amenable.
        \item $\C$ consists of rigid structures and has ERP.
        \item $\C$ is a Ramsey class and $\M$ admits an $\Aut(\M)$-invariant linear order, i.e., there exists a linear order $\preceq$ on $M$ such that for all $i,j\in M$ and all $\sigma\in\Aut(\M)$ we have that $i\preceq j$ if, and only if $\sigma(i)\preceq\sigma(j)$.
    \end{enumerate}
\end{fact}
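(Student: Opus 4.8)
The plan is to establish the cycle $(3)\Rightarrow(2)\Rightarrow(1)\Rightarrow(3)$. The implication $(3)\Rightarrow(2)$ is immediate: a Ramsey class has ERP by \Cref{def:rc}, and \Cref{ramsey classes have rigid elts} then forces all its members to be rigid. The remaining two implications carry the real content, and both exploit the same principle: extreme amenability can be traded against a Ramsey-type combinatorial statement about $\M$ by running fixed-point arguments on suitable compact spaces of colourings.

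For $(2)\Rightarrow(1)$, I would first extract from ERP the \emph{local Ramsey property} of $\M$: for all finite substructures $A\subseteq B$ of $\M$ and every finite colouring $\chi\colon\binom{\M}{A}\to r$ there is $g\in\Aut(\M)$ with $\chi\restr\binom{g(B)}{A}$ constant. Indeed, by \Cref{erp all colours} one may pick $C\in\C$ with ${\arrows CB}^A_r$; since $\C=\Age(\M)$ and $\M$ is ultrahomogeneous we may realise $B\subseteq C$ as substructures of $\M$, a monochromatic $\tilde B\in\binom{C}{B}$ is isomorphic to $B$, and ultrahomogeneity supplies $g\in\Aut(\M)$ with $g(B)=\tilde B$. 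The remaining — and genuinely hard — step is to deduce extreme amenability of $G:=\Aut(\M)$ from this local Ramsey property: given a continuous action $G\acts X$ on a compact Hausdorff space with a basepoint $x_0$, one colours finite portions of the orbit $G\cdot x_0$ by the cells of a finite open cover of $X$, uses the local Ramsey property to locate large ``monochromatic'' finite pieces, and passes to an ultrafilter limit to produce a $G$-fixed point of $X$. I expect this reduction to be the main obstacle; it is the technical heart of the Kechris--Pestov--Todorcevic correspondence.

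For $(1)\Rightarrow(3)$, I would apply the fixed-point property to two compact $G$-spaces. First, the set $\LO(M)$ of all linear orders on $M$ is a closed, $G$-invariant subspace of $2^{M\times M}$ with the product topology, hence a compact $G$-space, and a fixed point is exactly an $\Aut(\M)$-invariant linear order $\preceq$; this $\preceq$ moreover forces every $A\in\C$ to be rigid, since any automorphism of a finite $A\subseteq\M$ extends by ultrahomogeneity to some $g\in G$, which preserves $\preceq$, so that its restriction preserves $\preceq\restr A$ and is therefore the identity. Second, to obtain ERP, suppose it fails for some $A\subseteq B$ in $\C$; then every $C\in\C$ admits a colouring of $\binom{C}{A}$ with no monochromatic copy of $B$ inside $C$, and writing $\M=\bigcup_n C_n$ as an increasing union of finite substructures, a routine compactness argument produces a colouring $\chi\colon\binom{\M}{A}\to 2$ with no monochromatic copy of $B$ anywhere in $\M$. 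Now the orbit closure $\overline{G\cdot\chi}\subseteq 2^{\binom{\M}{A}}$ is a compact $G$-space, so it contains a $G$-fixed point $\chi^{*}$; since $G$ acts transitively on $\binom{\M}{A}$ by ultrahomogeneity, $\chi^{*}$ is constant, and then any sequence $g_n\cdot\chi\to\chi^{*}$ (a sequence suffices, as $\M$ is countable), evaluated on the finitely many copies of $A$ inside a fixed copy $B_0$ of $B$, makes $\chi\restr\binom{g_n^{-1}(B_0)}{A}$ constant for large $n$ — a monochromatic copy of $B$, a contradiction. Hence $\C$ has ERP; being already a Fraïssé class, it is a Ramsey class, and together with the invariant order $\preceq$ this yields $(3)$.
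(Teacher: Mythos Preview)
The paper does not prove this statement at all: it is recorded as a \emph{Fact}, with explicit citations to \cite[Theorem~4.8]{KechrisPestovTodorcevic_2005} and \cite[Theorem~11.2.2]{Bodirsky_2021}, and is used as a black box (notably in the proofs of \Cref{countable rc order} and \Cref{thm:kpt-fERP}). So there is no ``paper's own proof'' to compare your proposal against.

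That said, your sketch is a faithful outline of the standard Kechris--Pestov--Todorcevic argument as it appears in the cited sources: the cycle $(3)\Rightarrow(2)\Rightarrow(1)\Rightarrow(3)$, with $(2)\Rightarrow(1)$ going through the local Ramsey property of the limit and a finite-cover/ultrafilter argument on a compact $G$-flow, and $(1)\Rightarrow(3)$ by applying the fixed-point property to $\LO(M)$ and to the orbit closure of a putative bad colouring. You correctly flag that the hard step is $(2)\Rightarrow(1)$ and leave it at the level of a plan; filling that in is exactly what the references do. One minor point: in your $(1)\Rightarrow(3)$ argument for ERP, transitivity of $G$ on $\binom{\M}{A}$ uses rigidity of $A$ (otherwise you only get transitivity on isomorphic copies, not on embeddings), which you have already secured via the invariant order, so the logic is fine but the dependency is worth making explicit.
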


Topological dynamics is not a central theme in our paper, although we make some minor remarks on the topology of the automorphism group of ``tame'' infinitary expansions of $\L$-structures. We direct the reader to \cite[Section~$1$]{KechrisPestovTodorcevic_2005}, for the required background.

\section{Background: Generalised Indiscernibles}\label{sec:gen inds}
Throughout this \namecref{sec:gen inds}, fix a first-order language $\L$ and let $T$ be a fixed complete $\L$-theory with infinite models. Let $\Mbb$ be a $\kappa(\Mbb)$-saturated and $\kappa(\Mbb)$-homogeneous model of $T$, for some large cardinal $\kappa(\Mbb)$ (for instance, a monster model). Throughout this section, we will always be working inside $\Mbb$. By a \emph{small} subset of $\Mbb$ we mean a subset $A\subseteq\Mbb$ such that $|A|<\kappa(\Mbb)$.

\begin{note}
In this \namecref{sec:gen inds}, unless otherwise stated, all sequences will consist of \emph{same-length} tuples.
\end{note}

\begin{definition}[Generalised Indiscernibles]
    Let $\L^\prime$ be a first-order language and $\Ncal$ an $\L^\prime$-structure. Given an $\Ncal$-indexed sequence of tuples $\mathcal{I} = (\bar a_i :i\in\Ncal)$ from $\Mbb$, and a small subset $A\subseteq\Mbb$, we say that $\mathcal{I}$ is an \emph{$\Ncal$-indexed indiscernible sequence over $A$} or simply that $\mathcal{I}$ is \emph{$\Ncal$-indiscernible over $A$} if for all $n\in\omega$ and all sequences $i_1,\dots,i_n,j_1,\dots,j_n$ from $\Ncal$ we have that:
    \[
    \begin{aligned}
        \text{If } \qftp_\Ncal(i_1,\dots,i_n) = \qftp_\Ncal^{\L^\prime}&(i_j,\dots,j_n) \\ \text{ then } &\tp_\Mbb(\bar a_{i_1},\dots,\bar a_{i_n}/A) =  \tp_\Mbb(\bar a_{j_1},\dots,\bar a_{j_n}/A).
    \end{aligned}
    \]
    If $A=\emptyset$, we say that $\mathcal{I}$ is simply an \emph{$\Ncal$-indiscernible sequence}.
\end{definition}

\begin{note}
    Henceforth, as is usual, we will not speak of (generalised) indiscernibility over a small set $A\subseteq\Mbb$. Instead, when relevant, we will add constants naming $A$ to $\L$ reducing this to indiscernibility over $\emptyset$ (in the $\L(A)$-structure $\Mbb$, where the new constant symbols are naming the elements of $A$).
\end{note}

\begin{example}
    As mentioned in the introduction, if $\Ncal = (\N,<)$ (or, more generally, $\Ncal$ is any infinite linear order), then $\Ncal$ -indiscernible sequences are simply \emph{indiscernible sequences}, also referred to as \emph{order-indiscernibles}. If $\Ncal = \N$ (or, more generally, $\Ncal$ is any infinite set in the language of pure equality), then $\Ncal$-indiscernible sequences are simply \emph{totally indiscernible sequences} also referred to as \emph{indiscernible sets}.
\end{example}

\begin{definition}[The Modelling Property]\label{def:mp}
    Let $\L^\prime$ be a first-order language, $\Ncal$ be an $\L^\prime$-structure, and $\mathcal{I} = (\bar a_i :i\in\Ncal)$ be an $\Ncal$-indexed sequence of tuples from $\Mbb$.
    
    \begin{enumerate}[(1)]
        \item Given an $\Ncal$-indexed sequence of tuples $\mathcal{J} = (\bar b_i : i\in\Ncal)$ from $\Mbb$, we say that $\mathcal{J}$ is \emph{(locally) based on} $\mathcal{I}$ if for all finite sets of $\L$-formulas $\Delta\subseteq\L$, all $n\in\N$ and all $i_1,\dots,i_n$ from $\Ncal$ there is some $j_1,\dots,j_n$ from $\Ncal$ such that:
			
        \begin{enumerate}[(i)]
            
			\item $\qftp_\Ncal^{\L^\prime}(i_1,\dots,i_n) = \qftp_\Ncal^{\L^\prime}(j_1,\dots,j_n)$
            
			\item $\tp^\Delta_\Mbb(\bar b_{i_1},\dots,\bar b_{i_{n}}) = \tp^\Delta_\Mbb(\bar a_{j_1},\dots,\bar a_{j_{n}})$ 
		\end{enumerate}
        
        \item We say that $\Ncal$-indiscernible sequences have the \emph{modelling property in $T$} if for each $\Ncal$-indexed sequence $\mathcal{I} = (\bar a_i:i\in\Ncal)$ of tuples from $\Mbb\vDash T$ there exists an $\Ncal$-indiscernible sequence $\mathcal{J}$ based on $\mathcal{I}$.
        
        \item We say that $\Ncal$-indiscernible sequences have the \emph{modelling property} if for all first-order theories $T$, they have the modelling property in $T$.
    \end{enumerate}
\end{definition}

The following is immediate, from the definition of the modelling property:

\begin{lemma}\label{same-qf-types-MP}
    Let $\cM_1, \cM_2$ be structures in languages $\cL_1,\cL_2$, respectively, with the same domain $M$. Assume 
    $\qftp_{\cM_1}(\bar{a})=\qftp_{\cM_1}(\bar{b})$ if, and only if, $\qftp_{\cM_2}(\bar{a})=\qftp_{\cM_2}(\bar{b})$
    for every $\bar{a},\bar{b}\in M$. Then $\cM_1$ has MP if, and only if, $\cM_2$ has MP.
\end{lemma}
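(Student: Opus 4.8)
The plan is to observe that both notions entering the modelling property --- ``$\cM$-indiscernible sequence'' and ``(locally) based on'' --- depend on the index structure $\cM$ only through the equivalence relation it induces on finite tuples of its domain, namely $\bar a \sim_{\cM} \bar b \iff \qftp_{\cM}(\bar a) = \qftp_{\cM}(\bar b)$. The hypothesis of the lemma says exactly that $\sim_{\cM_1}$ and $\sim_{\cM_2}$ are the same relation on finite tuples from $M$, and since $\cM_1$ and $\cM_2$ have the same underlying set $M$, an ``$\cM_1$-indexed sequence'' is literally the same object as an ``$\cM_2$-indexed sequence''. Granting this, the modelling property for $\cM_1$-indiscernibles and for $\cM_2$-indiscernibles become the same statement, and we are done.

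Concretely, I would fix an arbitrary complete $\L$-theory $T$ with infinite models, work inside a monster model $\Mbb \vDash T$, and fix a sequence $\mathcal I = (\bar a_i : i \in M)$ of same-length tuples from $\Mbb$. First I check that $\mathcal I$ is $\cM_1$-indiscernible if, and only if, it is $\cM_2$-indiscernible: the defining condition is the implication ``if $\qftp_{\cM_k}(i_1,\dots,i_n) = \qftp_{\cM_k}(j_1,\dots,j_n)$ then $\tp_\Mbb(\bar a_{i_1},\dots,\bar a_{i_n}) = \tp_\Mbb(\bar a_{j_1},\dots,\bar a_{j_n})$'', whose conclusion makes no reference to $\cM_k$ and whose hypothesis, by assumption, holds for $k = 1$ precisely when it holds for $k = 2$. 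Second, for any two $M$-indexed sequences $\mathcal I, \mathcal J$ from $\Mbb$, I check that $\mathcal J$ is based on $\mathcal I$ in the sense computed using $\cM_1$ if, and only if, it is so in the sense computed using $\cM_2$. This is because, for each finite tuple $(i_1,\dots,i_n)$ from $M$, the set $\{(j_1,\dots,j_n) : \qftp_{\cM_1}(j_1,\dots,j_n) = \qftp_{\cM_1}(i_1,\dots,i_n)\}$ coincides with $\{(j_1,\dots,j_n) : \qftp_{\cM_2}(j_1,\dots,j_n) = \qftp_{\cM_2}(i_1,\dots,i_n)\}$, both being the common $\sim$-class of $(i_1,\dots,i_n)$; hence the clause ``there exist $j_1,\dots,j_n$ with the same quantifier-free type such that $\tp^\Delta_\Mbb(\bar b_{i_1},\dots,\bar b_{i_n}) = \tp^\Delta_\Mbb(\bar a_{j_1},\dots,\bar a_{j_n})$'' is the very same existential assertion for $k = 1$ and for $k = 2$.

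Combining these two observations, for every $T$ the statement ``every $\cM_1$-indexed sequence of tuples from $\Mbb \vDash T$ has an $\cM_1$-indiscernible sequence based on it'' and the corresponding statement for $\cM_2$ are literally the same; hence $\cM_1$-indiscernibles have MP in $T$ if, and only if, $\cM_2$-indiscernibles have MP in $T$, and quantifying over all $T$ yields the lemma. There is essentially no obstacle here; the only point deserving a moment's attention is that the existential quantifier in the definition of ``based on'' ranges over a $\sim$-class, and one should note that the hypothesis gives a genuine two-sided equality of these classes, so neither the ``if \dots then'' shape of indiscernibility nor the existential in ``based on'' introduces any asymmetry between $\cM_1$ and $\cM_2$.
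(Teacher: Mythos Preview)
Your proposal is correct and matches the paper's approach: the paper simply states that the lemma is ``immediate from the definition of the modelling property,'' and your argument is exactly the unpacking of why that is so. Both rely on the observation that the notions of $\Ncal$-indiscernibility and ``based on'' depend on the index structure only through the equivalence relation $\qftp_\Ncal(\bar a)=\qftp_\Ncal(\bar b)$, which by hypothesis is identical for $\cM_1$ and $\cM_2$.
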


We will simply say that $\Ncal$ \emph{has the modelling property} (MP) as an abbreviation for ``$\Ncal$-indiscernible sequences have the modelling property''.

Before we state Scow's theorem, we recall the following definition, from \cite{Scow_2015}:

\begin{definition}[QFI, {\cite[Definition 2.3]{Scow_2015}}]
    Let $\Ncal$ be an $\L^\prime$-structure. We say that $\Ncal$ is \emph{QFI} if for any quantifier-free type $q(x)$ realised in $\Ncal$ there is a quantifier-free formula $\theta_q(x)$ such that $\Th_{\forall}(\Ncal)\cup\Set{\theta_q(x)}$ is consistent and: 
    \[
    \Th_\forall(\Ncal)\cup\Set{\theta_q(x)}\vdash q(x).
    \]
\end{definition}

As commented in \cite{Scow_2015}, the term QFI refers to quantifier-free types (in particular, realised quantifier-free types) being isolated (and, in fact, by a quantifier-free formula). The main theorem of \cite{Scow_2015}, which draws a deep connection between generalised indiscernibles and Ramsey classes, is the following:

\begin{fact}[{\cite[Theorem 3.12]{Scow_2015}}]\label{thm:scow}
    Fix a first-order language $\L^\prime$ and assume that $\L^\prime$ contains a distinguished binary relation symbol $\leq$. Let $\C$ be a class of finite $\L^\prime$-structures and $\Ncal$ an infinite, locally finite $\L^\prime$-structure with QFI such that $\Age(\Ncal) = \C$ and $(\Ncal,\leq)\vDash\mathsf{LO}$ (where $\mathsf{LO}$ expresses that $\leq$ is a linear order). Then, the following are equivalent:
    \begin{enumerate}[(1)]
        \item \label{scow rc} $\C$ is a Ramsey class.
        \item \label{scow mp} $\Ncal$-indiscernibles have the modelling property.
    \end{enumerate}
\end{fact}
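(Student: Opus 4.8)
The plan is to prove both implications directly --- without passing through a Fra\"iss\'e limit or topological dynamics --- using compactness, the saturation of $\Mbb$, and the multi-structure Ramsey statement \Cref{Ramsey-many-degs} as the combinatorial engine. I would first record that, since $\Age(\Ncal)=\C$ is a class of \emph{finite} structures, $\Ncal$ is automatically locally finite, so ``$C\in\C$'' and ``$C$ a finite substructure of $\Ncal$'' are interchangeable, and every member of $\C$ embeds into $\Ncal$.

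For $(\ref{scow rc})\Rightarrow(\ref{scow mp})$, fix a theory $T$, a monster $\Mbb\vDash T$, and an $\Ncal$-indexed sequence $\mathcal I=(\bar a_i:i\in\Ncal)$, and consider the type $\Sigma$ in variables $(\bar y_i:i\in\Ncal)$ whose realisations are exactly the $\Ncal$-indiscernible sequences based on $\mathcal I$: for each $\L$-formula $\psi$ and each pair $\bar i,\bar j$ of same-quantifier-free-type tuples from $\Ncal$ include $\psi(\bar y_{\bar i})\leftrightarrow\psi(\bar y_{\bar j})$, and for each finite $\Delta\subseteq\L$ and each $\bar i$ include the \emph{finite} disjunction of the formulas $p(\bar y_{\bar i})$ as $p$ ranges over the finitely many $\Delta$-types of the form $\tp^\Delta_\Mbb(\bar a_{\bar j})$ with $\qftp_\Ncal(\bar j)=\qftp_\Ncal(\bar i)$. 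By saturation it suffices to show $\Sigma$ is finitely satisfiable; merging a finite fragment into one finite $\Delta$ and one finite substructure $B\subseteq\Ncal$ containing all occurring indices, this reduces to finding an embedding $e:B\to\Ncal$ with $\tp^\Delta_\Mbb(\bar a_{e(\bar w)})=\tp^\Delta_\Mbb(\bar a_{e(\bar w')})$ whenever $\bar w,\bar w'$ are tuples from $B$ with $\qftp_\Ncal(\bar w)=\qftp_\Ncal(\bar w')$. To build $e$ I would enumerate the finitely many quantifier-free types $q_1,\dots,q_m$ realised in $B$, pick a witness $\bar c_\ell$ of each, and set $A_\ell:=\langle\bar c_\ell\rangle_B\in\C$; since $\C$ has ERP each $A_\ell$ has Ramsey degree $1$ (\Cref{rmk:erp-iff-rd-is-1}), so \Cref{Ramsey-many-degs} --- used with arbitrarily many colours, as it permits --- gives $C\in\C$ with $B\subseteq C$ that is simultaneously Ramsey for all the $A_\ell$. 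Fixing an embedding $f:C\to\Ncal$ and colouring each $g\in\binom{C}{A_\ell}$ by $\tp^\Delta_\Mbb(\bar a_{f(g(\bar c_\ell))})$ (which ranges over a finite set), \Cref{Ramsey-many-degs} returns an embedding $h:B\to C$ whose image is monochromatic for every colouring; then $e:=f\circ h$ works, since a tuple $\bar w$ from $B$ of quantifier-free type $q_\ell$ induces (via $\bar c_\ell\mapsto\bar w$) an embedding $A_\ell\to B$, and $\tp^\Delta_\Mbb(\bar a_{e(\bar w)})$ equals the constant colour of its $h$-image.

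For $(\ref{scow mp})\Rightarrow(\ref{scow rc})$, note $\C=\Age(\Ncal)$ automatically has HP and JEP, so I would only need ERP. Suppose it fails: fix $A\subseteq B$ in $\C$ admitting no $C\in\C$ with ${\arrows C B}^A_2$, and for each finite $C\subseteq\Ncal$ choose a colouring $\chi_C:\binom{C}{A}\to 2$ with no monochromatic copy of $B$. A compactness argument over finite families of copies of $B$ glues these into a single $\chi:\binom{\Ncal}{A}\to 2$ that is non-constant on $\binom{g(B)}{A}$ for \emph{every} embedding $g:B\to\Ncal$. Fix full enumerations $\bar a^A$ of the domain of $A$ and $\bar a^B$ of that of $B$ with $\bar a^A$ a subtuple of $\bar a^B$; let $q_A,q_B$ be their quantifier-free types in $\Ncal$ and $S$ the finite set of index patterns $\vec s$ with $\bar a^B\restr\vec s$ of type $q_A$ (so $S$ names precisely the copies of $A$ inside $B$). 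Let $\L^+:=\L^\prime\cup\{P\}$ with $P$ of arity $|A|$, expand $\Ncal$ to an $\L^+$-structure $\Ncal^+$ by declaring $P(\bar v)$ to hold iff $\bar v$ has quantifier-free type $q_A$ and the corresponding embedding $A\to\Ncal$ gets $\chi$-colour $1$, and take a monster $\Mbb^+\succeq\Ncal^+$ of $\Th(\Ncal^+)$. Applying the modelling property in $\Th(\Ncal^+)$ to the trivial $\Ncal$-indexed sequence $(i:i\in\Ncal)$ of singletons yields an $\Ncal$-indiscernible $\mathcal J=(c_i:i\in\Ncal)$ based on it. Now $\Ncal$-indiscernibility forces $P(c_{v_1},\dots,c_{v_{|A|}})$ to hold for all tuples $\bar v$ of type $q_A$, or for none; but applying ``based on'' (\Cref{def:mp}(1)) with $\Delta:=\{P(\bar x\restr\vec s):\vec s\in S\}$ and the index tuple $\bar a^B$ gives a tuple $\bar j$ of quantifier-free type $q_B$ with $\tp^\Delta_{\Mbb^+}(\bar c_{\bar a^B})=\tp^\Delta_{\Mbb^+}(\bar j)$, and since $\bar j$ enumerates an honest copy of $B$ in $\Ncal$, the non-constancy of $\chi$ on its copies of $A$ translates (via the definition of $P$ and $\Ncal^+\preceq\Mbb^+$) into $\Mbb^+\vDash P(\bar j\restr\vec s)$ and $\Mbb^+\not\vDash P(\bar j\restr\vec s')$ for some $\vec s,\vec s'\in S$. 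Transporting along the equality of $\Delta$-types then contradicts the dichotomy, because $\bar a^B\restr\vec s$ and $\bar a^B\restr\vec s'$ have the same quantifier-free type $q_A$ in $\Ncal$.

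I expect the main obstacle to be in the first implication: a single application of ERP makes only one ``shape'' of subtuple monochromatic, whereas the sequence being built must be homogeneous on \emph{all} quantifier-free types of subtuples of $B$ at once --- this is exactly why the multi-structure version \Cref{Ramsey-many-degs}, together with the upgrade to arbitrarily many colours (\Cref{erp all colours}), is needed rather than \Cref{def:hp-jep-ap-erp}(4) verbatim, and why the reduction to a fixed finite $\Delta$ via compactness must be set up with care. In the second implication the only subtle point is making the copies of $A$ inside a copy of $B$ visible as honest subtuples of the enumerating tuple, which is arranged simply by enumerating $A$ and $B$ in full rather than by generators. I also note that this route appears to use only local finiteness of $\Ncal$ --- neither QFI nor the distinguished linear order enters --- consistently with the strengthening stated as \Cref{rc mp intro}.
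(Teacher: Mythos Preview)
The paper does not prove this statement: it is recorded as a Fact cited from \cite{Scow_2015} and used as a black box, so there is no ``paper's own proof'' to compare against.

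Your argument is essentially correct, and --- as you note in your final paragraph --- it nowhere uses QFI or the distinguished linear order. This means you have in fact given a direct proof of the paper's main \Cref{rc mp}, bypassing entirely the route the paper takes to get there. The paper's strategy is to accept \Cref{thm:scow} with all its hypotheses and then peel them off one at a time: QFI is removed in \Cref{cor:scow-no-qfi} via the quantifier-free type Morleyisation; the order hypothesis is removed in the countable case (\Cref{countable rc order,countable: rc implies mp}) by invoking the Kechris--Pestov--Todorcevic correspondence to manufacture an $\L'_{\infty,0}$-definable order; and finally the countability restriction is lifted by the reduction argument of \Cref{MP for union of countable}. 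Your single combinatorial argument with \Cref{Ramsey-many-degs} and compactness reaches the same endpoint more economically. What the paper's detour buys is the infinitary-Morleyisation machinery of \Cref{sec:infinitary-logic} itself, which is reused in \Cref{sec:local-finiteness} for the f-ERP results and for \Cref{thm:kpt-fERP}; for the bare equivalence in the locally finite setting, however, your approach is shorter.

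Two small points to tighten. In $(\ref{scow rc})\Rightarrow(\ref{scow mp})$, when you ``enumerate the finitely many quantifier-free types $q_1,\dots,q_m$ realised in $B$'', you should restrict to tuples of the finitely many lengths actually occurring in the chosen finite fragment of $\Sigma$; otherwise there are infinitely many lengths to handle. In $(\ref{scow mp})\Rightarrow(\ref{scow rc})$, the ``compactness argument over finite families of copies of $B$'' should be spelled out as compactness of the product space $2^{\binom{\Ncal}{A}}$: each embedding $g:B\to\Ncal$ determines a clopen set of colourings non-constant on $\binom{g(B)}{A}$, and local finiteness of $\Ncal$ is exactly what ensures that any finite family of such embeddings lands inside a single finite $C\in\C$, so that the hypothesised bad colouring $\chi_C$ witnesses the finite intersection property.
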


\section{Existence of indiscernible sequences}\label{sec:existence}

In this \namecref{sec:existence}, we discuss necessary and sufficient conditions on an $\L'$-structure $\Ncal$ for $\Ncal$-indiscernibles to exist in (sufficiently saturated) models of arbitrary theories. The key definition of this \namecref{sec:existence} is the following:

\begin{definition}\label{def:qford}
Let $\C$ be a class of $\L'$-structures. We say that $\C$ is \emph{$\L'_{\infty,0}$-orderable} if there is a (possibly infinite) Boolean combination of atomic and negated atomic $\L'$-formulas $\Phi(x,y):=\bigvee_{i\in I}\bigwedge_{j\in J_i} \vphi_{j_i}^{(-1)^{n_{j_i}}}(x,y)$, such that $\Phi$ is a linear order for every structure in $\C$, i.e. if there is an $\L_{\infty,0}$-formula which is a linear order on all structures in $\C$. We say that an $\L'$-structure $\Ncal'$ is \emph{$\L'_{\infty,0}$-orderable} if $\Age(\M)$ is $\L'_{\infty,0}$-orderable.
\end{definition}

We start with the following easy \namecref{qftp orders}, which is really an immediate consequence of \Cref{same-qf-types-MP}:

\begin{lemma}\label{qftp orders}
    Let $\Ncal$ be an $\L^\prime$-structure and let $\preceq$ be a linear order on $\Ncal$ which is a union of quantifier-free types. Let $\Ncal_o = (\Ncal,\preceq)$. If $\Ncal_o$-indexed indexed indiscernibles have the modelling property, then $\Ncal$-indiscernibles have the modelling property.
\end{lemma}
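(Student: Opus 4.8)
The plan is to deduce this directly from \Cref{same-qf-types-MP}. Write $\L'_o = \L'\cup\{\preceq\}$ for the language of $\Ncal_o$, and observe that $\Ncal$ and $\Ncal_o$ share the same domain $N$ and that $\L'\subseteq\L'_o$. I would like to apply \Cref{same-qf-types-MP} with $\cM_1=\Ncal$ and $\cM_2=\Ncal_o$; for this it suffices to check that equality of quantifier-free types is the same relation on tuples of $N$ whether computed in $\Ncal$ or in $\Ncal_o$. Granting this, \Cref{same-qf-types-MP} gives that $\Ncal$ has MP if and only if $\Ncal_o$ does, and the hypothesis that $\Ncal_o$-indexed indiscernibles have the modelling property yields the conclusion.

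So the one thing to prove is: for all tuples $\bar a,\bar b$ from $N$, $\qftp_{\Ncal}(\bar a)=\qftp_{\Ncal}(\bar b)$ if and only if $\qftp_{\Ncal_o}(\bar a)=\qftp_{\Ncal_o}(\bar b)$. The implication from $\Ncal_o$ to $\Ncal$ is immediate, since $\L'\subseteq\L'_o$ and a quantifier-free $\L'$-type is just the restriction of the corresponding quantifier-free $\L'_o$-type. For the converse, I would use the hypothesis that $\preceq$ is a union of quantifier-free $\L'$-types: fix a set $P$ of complete quantifier-free $\L'$-types $p(x,y)$ with $\{(c,d)\in N^2 : c\preceq d\}=\{(c,d) : \qftp_{\Ncal}(c,d)\in P\}$. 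Every atomic $\L'_o$-formula is either an atomic $\L'$-formula---on which $\bar a$ and $\bar b$ agree by assumption---or of the form $t_1\preceq t_2$ with $t_1,t_2$ being $\L'$-terms. In the latter case, writing $c=t_1^{\Ncal}(\bar a)$, $d=t_2^{\Ncal}(\bar a)$, $c'=t_1^{\Ncal}(\bar b)$, $d'=t_2^{\Ncal}(\bar b)$, one observes that for every quantifier-free $\L'$-formula $\chi(u,v)$ the formula $\psi(\bar x):=\chi(t_1(\bar x),t_2(\bar x))$ is again quantifier-free over $\L'$, so that
\[
\Ncal\vDash\chi(c,d)\iff\Ncal\vDash\psi(\bar a)\iff\Ncal\vDash\psi(\bar b)\iff\Ncal\vDash\chi(c',d')
\]
by $\qftp_{\Ncal}(\bar a)=\qftp_{\Ncal}(\bar b)$. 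Hence $\qftp_{\Ncal}(c,d)=\qftp_{\Ncal}(c',d')$, so $(c,d)\in{\preceq}$ iff $(c',d')\in{\preceq}$, i.e.\ $\bar a$ and $\bar b$ agree on $t_1\preceq t_2$. Thus $\bar a$ and $\bar b$ satisfy exactly the same atomic $\L'_o$-formulas, hence the same quantifier-free $\L'_o$-formulas, giving $\qftp_{\Ncal_o}(\bar a)=\qftp_{\Ncal_o}(\bar b)$.

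I do not expect a genuine obstacle here. The only mild subtlety is that $\L'$ may contain function symbols, so that atomic $\L'_o$-formulas may involve compound terms rather than bare variables; this is dealt with by the closure of quantifier-free $\L'$-formulas under substitution of $\L'$-terms, as above. If $\Ncal$ is relational (as is standard in this context) everything collapses to the single remark that whether $c\preceq d$ holds depends only on $\qftp_{\Ncal}(c,d)$, and the argument is entirely routine.
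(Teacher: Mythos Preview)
Your proof is correct and follows exactly the paper's intended route: the paper states this lemma as ``an immediate consequence of \Cref{same-qf-types-MP}'' without further argument, and you have simply supplied the (routine) verification that $\Ncal$ and $\Ncal_o$ satisfy the hypothesis of that lemma. Your handling of compound terms is careful and correct, though in the paper's context this level of detail is not spelled out.
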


We observe that given an $\L^\prime$-structure $\Ncal$ the property that $\Age(\Ncal)$ is $\L^\prime_{\infty,0}$-orderable is the minimal requirement for $\Ncal$-indiscernibles to exist in every theory. We start by considering universal theories only:

\begin{lemma}\label{existence2 of inds equivalent to qf-definable structure}
    Given an $\L^\prime$-structure $\Ncal$ and a universal $\cL$-theory $T$, the following are equivalent:
    \begin{enumerate}[(1)]
        \item\label{existence2 items:inds} There is some $\cM\models T$ and a non-constant $\cN$-indiscernible sequence of singletons in $\cM$.
        \item\label{existence2 items:model} There is some $\cM\models T$ such that:
        \begin{itemize}
            \item $N\subseteq M$ and $\langle N\rangle = \M$, where $\langle N\rangle$ denotes the substructure of $\M$ generated by $N$.
            \item Every formula in $\L$ on the elements of $N$ is given by an $\L^\prime_{\infty,0}$-formula.
        \end{itemize}
    \end{enumerate}
\end{lemma}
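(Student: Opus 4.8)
The plan is to prove the two implications separately, using a compactness argument in each direction.

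For the direction $(2)\Rightarrow(1)$: suppose we have $\cM\models T$ with $N\subseteq M$, $\langle N\rangle_{\cM}=\cM$, and every $\L$-definable relation on tuples from $N$ agreeing with an $\L'_{\infty,0}$-formula. First I would use $\L'_{\infty,0}$-orderability of $\Age(\cN)$ — wait, that is not literally assumed here, but since $\langle N\rangle=\cM$ and $N$ lives in a model of a universal theory, I would instead argue directly. Consider the sequence $\cI=(a_i:i\in\cN)$ with $a_i=i$, i.e. the identity enumeration of $N$ inside $\cM$. I claim $\cI$ is $\cN$-indiscernible: if $\qftp_{\cN}(i_1,\dots,i_n)=\qftp_{\cN}(j_1,\dots,j_n)$, then for any $\L$-formula $\phi(\bar x)$, by hypothesis the set it defines on $N^n$ is given by some $\L'_{\infty,0}$-formula $\psi(\bar x)$, and $\psi$ is a Boolean combination of atomic $\L'$-formulas, hence its truth value depends only on $\qftp_{\cN}$; therefore $\cM\models\phi(a_{i_1},\dots,a_{i_n})\iff\cM\models\phi(a_{j_1},\dots,a_{j_n})$. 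Since $\cN$ is infinite, I can arrange $\cI$ to be non-constant (distinct elements of $N$ have different quantifier-free types only if some atomic formula separates them; if $\cN$ is, e.g., an infinite set in pure equality this still gives a non-constant sequence of singletons as $a_i=i$ are distinct). This gives (1).

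For the direction $(1)\Rightarrow(2)$: suppose $\cM_0\models T$ carries a non-constant $\cN$-indiscernible sequence $\cI=(a_i:i\in\cN)$ of singletons. Replace $\cM_0$ by $\langle\{a_i:i\in\cN\}\rangle_{\cM_0}$; since $T$ is universal this substructure still models $T$, and by renaming we may assume $N=\{a_i:i\in\cN\}\subseteq M$ (identifying $i$ with $a_i$ — here I should check the $a_i$ are distinct, which follows from non-constancy plus indiscernibility: if $a_i=a_j$ for some $i\ne j$ realising different quantifier-free types the sequence could still be constant, so one has to be slightly careful; the honest move is to pass to $\cN'$-indiscernibles for a suitable quotient, or simply note non-constancy forces $a_i\ne a_j$ whenever $\qftp_{\cN}(i)\ne\qftp_{\cN}(j)$, and if all singletons have the same quantifier-free type then $|N|\le 1$, contradicting non-constancy — so in fact at least two values occur, and $\cN$-indiscernibility then forces $i\mapsto a_i$ to be injective on each quantifier-free-type class in a way sufficient to realise the claim). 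Now $\langle N\rangle=\cM$ by construction. It remains to show every $\L$-formula $\phi(\bar x)$ defines, on $N^n$, a set given by an $\L'_{\infty,0}$-formula. By $\cN$-indiscernibility, the set $\{\bar\imath\in\cN^n:\cM\models\phi(a_{i_1},\dots,a_{i_n})\}$ is a union of $\qftp_{\cN}$-classes; the $\L'_{\infty,0}$-formula is then $\bigvee\{q(\bar x):q$ a complete quantifier-free $\L'$-type with $q\subseteq$ the $\phi$-locus$\}$, where each complete quantifier-free type is itself expressible as the (possibly infinite) conjunction of its atomic and negated atomic members. This is exactly an $\L'_{\infty,0}$-formula in the sense of \Cref{def:qford}, proving (2).

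The main obstacle I anticipate is the bookkeeping around non-constancy and the identification of $\cN$ with $\{a_i:i\in\cN\}$ in the $(1)\Rightarrow(2)$ direction: a priori the map $i\mapsto a_i$ need not be injective, so "the substructure generated by the sequence" is not literally indexed by $\cN$. The clean fix is to observe that non-constancy guarantees the image has size $\ge 2$, and that the relevant statement — every $\L$-definable relation on the image being $\L'_{\infty,0}$-definable — is really a statement about the induced structure on the image, which is $\cN$-indiscernibility transported along the (possibly non-injective) indexing map; one should phrase (2)'s second bullet as "every $\L$-formula evaluated on tuples $(a_{i_1},\dots,a_{i_n})$ depends only on $\qftp_{\cN}(i_1,\dots,i_n)$", which is manifestly equivalent to the union-of-quantifier-free-types description and sidesteps injectivity entirely. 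The other small subtlety is that universality of $T$ is used precisely to guarantee substructures of models are models — worth flagging explicitly since it is the only place the hypothesis "$T$ universal" enters.
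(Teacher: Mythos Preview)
Your proof is correct and matches the paper's approach: $(2)\Rightarrow(1)$ via the identity indexing $a_i=i$, and $(1)\Rightarrow(2)$ by restricting to the generated substructure (using universality of $T$) and writing each $\L$-formula on $N$ as the $\L'_{\infty,0}$-disjunction over the quantifier-free $\L'$-types in its locus. Two minor notes: neither direction actually uses compactness (contrary to your opening sentence), and your injectivity worry about $i\mapsto a_i$ is in fact more careful than the paper, which simply asserts this map is a bijection without comment.
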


\begin{remark}
    In case $\L$ is relational, the \namecref{existence2 of inds equivalent to qf-definable structure} actually gives us that there is some \emph{$\L^\prime_{\infty, 0}$-definable model} of $T$ on $\Ncal$, where, by \emph{$\L^\prime_{\infty, 0}$-definable structure on $\cN$}, we mean an $\cL$-structure $\cM$ with the same domain as $\cN$, such that each relation in $\cL$ on $\cM$ is given by an $\cL'_{\infty,0}$-formula in $\cN$.
\end{remark}

\begin{proof}
    For the implication (\labelcref{existence2 items:model})$\implies$(\labelcref{existence2 items:inds}), observe that if $T$ is a universal theory and $\M\vDash T$ is an $\L^\prime_{\infty,0}$-definable structure on $\Ncal$, as in (\labelcref{existence2 items:model}), then $N$ is, by definition, an $\Ncal$-indiscernible sequence of singletons in $\cM$, where each element is indexed by itself.
            
    Conversely, for the implication (\labelcref{existence2 items:inds})$\implies$(\labelcref{existence2 items:model}), suppose that $\M\vDash T$ and $(a_i: i\in \Ncal)$ are as in (\labelcref{existence2 items:inds}). Then, since $T$ is universal and $\Set{a_i: i\in \Ncal}\subseteq\M$, it follows that $\braket{\Set{a_i: i\in \Ncal}}_\Mcal\models T$. Let $\cM'$ be the $\L$-structure induced on $\braket{\Set{a_i: i\in \Ncal}}$ by the bijection $i\mapsto a_i$. Identifying $a_i$ with $i$, we assume $N\subseteq M$.
    To prove (\labelcref{existence2 items:model}), we show that every $\lL$-formula on $N$ is given by an $\lL'_{\infty,0}$-formula. To this end, let 
    $\vphi(x_1,\dots,x_n)$ be an $\lL$-formula, and let $\Psi:=\Set{\qftp_{\cN}(i_1,\dots, i_n) | \cM\models \vphi(a_{i_1},\dots, a_{i_n})  }$. 
    By definition of $\Psi$, if $\cM\models \vphi(a_{i_1},\dots, a_{i_n})$, then $i_1,\dots,i_n\models p$ for some
    $p\in \Psi$. On the other hand, by $\cN$-indiscernibility of $(a_i : i\in \cN)$, for every $p\in \Psi$, if $i_1,\dots, i_n\models p$, then $\cM\models \vphi(a_{i_1},\dots, a_{i_n})$. Thus,
    \[ 
        \cM\models \vphi(a_{i_1},\dots, a_{i_n}) \text{ if, and only if, } \cN\models \bigvee_{p\in \Psi}\left(\bigwedge_{\psi\in p} \psi(i_1,\dots,i_n)\right), 
    \]
    witnessing (\labelcref{existence2 items:model}).
\end{proof}

\begin{proposition}\label{existence of inds}
    Given an $\L^\prime$-structure $\Ncal$, the following are equivalent:
    \begin{enumerate}[(1)]
        \item\label{existence items:age orderable} $\Age(\Ncal)$ is $\L^\prime_{\infty,0}$-orderable.
        \item\label{existence items:inds exist in LO} There is an infinite $\Set{\leq}$-structure $\M$ such that $(\M,\leq)\vDash \mathsf{LO}$ and a non-constant $\Ncal$-indiscernible sequence in $\M$.
        \item\label{existence items:singleton inds exist in LO} There is an infinite $\Set{\leq}$-structure $\M$ such that $(\M,\leq)\vDash \mathsf{LO}$ and a non-constant $\Ncal$-indiscernible sequence \emph{of singletons} in $\M$.
        \item\label{existence items:inds exist in every theory} For every $\L$-theory $T$ with infinite models and every sufficiently saturated model $\Mbb\vDash T$ there is a non-constant $\Ncal$-indiscernible sequence in $\Mbb$.
        \item\label{existence items:singleton inds exist in every theory} For every $\L$-theory $T$ with infinite models, and every sufficiently saturated model $\Mbb\vDash T$ there is a non-constant $\Ncal$-indiscernible sequence \emph{of singletons} in $\Mbb$.
        \item\label{existence items:inds exist in every univ theory} For every \emph{universal} $\L$-theory $T$ with infinite models and every sufficiently saturated model $\Mbb\vDash T$ there is a non-constant $\Ncal$-indiscernible sequence in $\Mbb$.
        \item\label{existence items:singleton inds exist in every univ theory} For every \emph{universal} $\L$-theory $T$ with infinite models, and every sufficiently saturated model $\Mbb\vDash T$ there is a non-constant $\Ncal$-indiscernible sequence \emph{of singletons} in $\Mbb$.
        \item\label{existence items:age universalable} For every relational universal $\L$-theory $T$ with infinite models, there is some $\L^\prime_{\infty, 0}$-definable model of $T$ on $\Ncal$.
    \end{enumerate}
    
    \begin{proof}
        Notice that $\LO$ is a universal theory, so (\labelcref{existence items:age universalable})$\implies$(\labelcref{existence items:age orderable}), (\labelcref{existence items:inds exist in every univ theory})$\implies$(\labelcref{existence items:inds exist in LO}), and (\labelcref{existence items:singleton inds exist in every univ theory})$\implies$(\labelcref{existence items:singleton inds exist in LO}). 
        Since order-indiscernible sequences exist in every theory with infinite models, and if $\cN$ is an ordered structure, then $(\cN\upharpoonright \Set{<})$-indiscernibility implies $\cN$-indiscernibility, we have (\labelcref{existence items:age orderable})$\implies$(\labelcref{existence items:inds exist in every theory}). 
        The implications (\labelcref{existence items:inds exist in every theory})$\implies$(\labelcref{existence items:singleton inds exist in every theory}), (\labelcref{existence items:singleton inds exist in every theory})$\implies$(\labelcref{existence items:singleton inds exist in every univ theory}),
        (\labelcref{existence items:inds exist in every theory})$\implies$(\labelcref{existence items:inds exist in every univ theory}), (\labelcref{existence items:inds exist in every univ theory})$\implies$(\labelcref{existence items:singleton inds exist in every univ theory}), 
        and (\labelcref{existence items:inds exist in LO})$\implies$(\labelcref{existence items:singleton inds exist in LO}) are trivial.
        The equivalences (\labelcref{existence items:age orderable})$\iff$(\labelcref{existence items:singleton inds exist in LO}) and (\labelcref{existence items:age universalable})$\iff$(\labelcref{existence items:singleton inds exist in every univ theory}) are by \Cref{existence2 of inds equivalent to qf-definable structure}. We leave it to the reader to verify that the digraph of implications is strongly connected.
    \end{proof}
\end{proposition}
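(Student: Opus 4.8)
The plan is to prove a short cycle of implications through all eight items, add a few ``free'' equivalences, and then verify that the resulting directed graph of implications is strongly connected. Only three inputs carry real weight: \Cref{existence2 of inds equivalent to qf-definable structure}, applied to the theory $\LO$ and, in its relational form, to arbitrary relational universal theories; the classical Ehrenfeucht--Mostowski construction, which produces, inside any sufficiently saturated model of any theory with infinite models, a non-constant order-indiscernible sequence indexed by any prescribed infinite linear order; and the remark that an $\L'_{\infty,0}$-definable linear order on $\Ncal$ transports order-indiscernibility to $\Ncal$-indiscernibility. All remaining implications are bookkeeping across four orthogonal dichotomies present in the list: tuples vs.\ singletons, arbitrary theories vs.\ universal theories vs.\ $\LO$, relational vs.\ arbitrary languages, and ``some model'' vs.\ ``every sufficiently saturated model''.

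For the free moves: a non-constant $\Ncal$-indiscernible sequence of $m$-tuples always has some coordinate that is itself a non-constant $\Ncal$-indiscernible sequence of singletons, since $\Ncal$-indiscernibility passes to coordinates and non-constancy of the tuple sequence forces some coordinate sequence to take two distinct values; this gives (\labelcref{existence items:inds exist in LO})$\Rightarrow$(\labelcref{existence items:singleton inds exist in LO}), (\labelcref{existence items:inds exist in every theory})$\Rightarrow$(\labelcref{existence items:singleton inds exist in every theory}) and (\labelcref{existence items:inds exist in every univ theory})$\Rightarrow$(\labelcref{existence items:singleton inds exist in every univ theory}), whereas the converse implications are immediate because a singleton sequence is a sequence of $1$-tuples. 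Restricting ``for every $\L$-theory'' to ``for every universal $\L$-theory'' gives (\labelcref{existence items:inds exist in every theory})$\Rightarrow$(\labelcref{existence items:inds exist in every univ theory}), and, $\LO$ being a universal theory with infinite models, taking $\M$ to be a sufficiently saturated model of $\LO$ gives (\labelcref{existence items:inds exist in every univ theory})$\Rightarrow$(\labelcref{existence items:inds exist in LO}).

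The substantive implications are the following. For (\labelcref{existence items:age orderable})$\Rightarrow$(\labelcref{existence items:inds exist in every theory}), fix an $\L'_{\infty,0}$-formula $\Phi(x,y)$ that is a linear order on every member of $\Age(\Ncal)$; since each linear-order axiom involves at most three elements and any three elements of $\Ncal$ lie in a common finitely generated substructure, $\Phi$ linearly orders $\Ncal$ itself. Given any theory $T$ with infinite models and a sufficiently saturated $\Mbb\models T$, the Ehrenfeucht--Mostowski construction produces a non-constant order-indiscernible sequence $(\bar a_i : i\in N)$ indexed by the linear order $(N,\Phi)$ inside $\Mbb$; because $\Phi$ is a quantifier-free $\L'$-formula, the pattern of $\Phi$ and of $=$ on an index tuple is determined by its quantifier-free $\L'$-type, so index tuples with equal quantifier-free $\L'$-types have equal $\Phi$-order patterns, hence the tuples they index have equal types, i.e.\ the sequence is $\Ncal$-indiscernible. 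For (\labelcref{existence items:singleton inds exist in LO})$\Rightarrow$(\labelcref{existence items:age orderable}), apply \Cref{existence2 of inds equivalent to qf-definable structure} with $T=\LO$: item~(\labelcref{existence items:singleton inds exist in LO}) is exactly clause~(\labelcref{existence2 items:inds}) of that lemma for $\LO$, and clause~(\labelcref{existence2 items:model}) for $\LO$ says precisely that there is a linear order on $N$ given by an $\L'_{\infty,0}$-formula, i.e.\ that $\Age(\Ncal)$ is $\L'_{\infty,0}$-orderable. The relational form of the same lemma, applied to each relational universal theory $T$, shows that a non-constant $\Ncal$-indiscernible sequence of singletons in some model of $T$ exists if and only if there is an $\L'_{\infty,0}$-definable model of $T$ on $\Ncal$; this yields (\labelcref{existence items:singleton inds exist in every univ theory})$\Rightarrow$(\labelcref{existence items:age universalable}), and applying (\labelcref{existence items:age universalable}) to $\LO$, which is relational universal with infinite models, gives (\labelcref{existence items:age universalable})$\Rightarrow$(\labelcref{existence items:age orderable}).

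Finally I would assemble the cycle (\labelcref{existence items:age orderable}) $\to$ (\labelcref{existence items:inds exist in every theory}) $\to$ (\labelcref{existence items:inds exist in every univ theory}) $\to$ (\labelcref{existence items:inds exist in LO}) $\to$ (\labelcref{existence items:singleton inds exist in LO}) $\to$ (\labelcref{existence items:age orderable}), together with the equivalences (\labelcref{existence items:inds exist in every theory})$\leftrightarrow$(\labelcref{existence items:singleton inds exist in every theory}) and (\labelcref{existence items:inds exist in every univ theory})$\leftrightarrow$(\labelcref{existence items:singleton inds exist in every univ theory}) and the implications (\labelcref{existence items:singleton inds exist in every univ theory})$\Rightarrow$(\labelcref{existence items:age universalable})$\Rightarrow$(\labelcref{existence items:age orderable}), and check that every vertex then reaches every other---for instance (\labelcref{existence items:age universalable}) re-enters the cycle at (\labelcref{existence items:age orderable}), while the cycle reaches (\labelcref{existence items:singleton inds exist in every univ theory}), hence (\labelcref{existence items:age universalable}), through (\labelcref{existence items:inds exist in every univ theory}). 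I expect the main obstacle to be not any single arrow but precisely this last organisation: selecting a minimal set of implications, each cheap given \Cref{existence2 of inds equivalent to qf-definable structure} and Ehrenfeucht--Mostowski, that nonetheless leaves the eight-vertex digraph strongly connected. The one step that needs genuine care is the transfer in (\labelcref{existence items:age orderable})$\Rightarrow$(\labelcref{existence items:inds exist in every theory}): since $\Phi$ may be an infinitary Boolean combination of atomic and negated atomic $\L'$-formulas, one must verify that its value on a pair---and hence the order pattern it induces on a tuple---is still read off from the finitary quantifier-free $\L'$-type.
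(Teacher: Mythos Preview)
Your proposal is correct and follows essentially the same route as the paper: the key inputs are \Cref{existence2 of inds equivalent to qf-definable structure} (for (\labelcref{existence items:singleton inds exist in LO})$\Leftrightarrow$(\labelcref{existence items:age orderable}) and (\labelcref{existence items:singleton inds exist in every univ theory})$\Leftrightarrow$(\labelcref{existence items:age universalable})), the Ehrenfeucht--Mostowski existence of order-indiscernibles (for (\labelcref{existence items:age orderable})$\Rightarrow$(\labelcref{existence items:inds exist in every theory})), and the observation that $\LO$ is universal, with the remaining arrows being bookkeeping. You are more explicit than the paper in two places---the coordinate-projection argument for passing from tuples to singletons, and the check that an $\L'_{\infty,0}$-formula $\Phi$ has its truth value determined by the finitary quantifier-free type---but these are elaborations of steps the paper simply labels ``trivial'', not a different strategy.
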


This proposition gives us the following, almost immediate corollary:

\begin{corollary}\label{mp-implies-ordered-expansion}
    Let $\Ncal$ be an $\L^\prime$-structure with MP. Then $\Ncal$ is $\L'_{\infty,0}$-orderable.
\end{corollary}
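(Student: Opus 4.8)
The plan is to deduce this directly from \Cref{existence of inds}. By \Cref{def:qford}, $\Ncal$ is $\L'_{\infty,0}$-orderable precisely when $\Age(\Ncal)$ is, so it suffices to verify one of the equivalent conditions of \Cref{existence of inds}; I will verify~(\labelcref{existence items:singleton inds exist in LO}), namely that there is an infinite linear order carrying a non-constant $\Ncal$-indiscernible sequence of singletons. In fact this comes directly from MP applied to a single theory, for instance $\DLO$.

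First I would let $\Mbb$ be a sufficiently saturated infinite linear order in the language $\Set{\leq}$ (for concreteness, a monster model of $T := \DLO$), chosen with $\kappa(\Mbb) > |N|$. Then the domain of $\Mbb$ admits an injection from $N$, giving an $\Ncal$-indexed sequence $\mathcal{I} = (a_i : i\in\Ncal)$ of singletons with $a_i \neq a_{i'}$ whenever $i\neq i'$. Since $\Ncal$ has MP, it has MP in $T$, so there is an $\Ncal$-indiscernible sequence $\mathcal{J} = (b_i : i\in\Ncal)$ based on $\mathcal{I}$; as $\tp^\Delta$ of same-length tuples is what appears in the definition of ``based on'', $\mathcal{J}$ again consists of singletons. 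It remains only to check that $\mathcal{J}$ is non-constant.

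To see this, pick distinct $i, i'\in\Ncal$ (possible since $\Ncal$ is infinite) and apply the definition of ``based on'' from \Cref{def:mp} with $n = 2$ and $\Delta = \Set{x_1 = x_2}$: there are $j, j'\in\Ncal$ with $\qftp^{\L'}_\Ncal(i,i') = \qftp^{\L'}_\Ncal(j,j')$ and $\tp^\Delta_\Mbb(b_i, b_{i'}) = \tp^\Delta_\Mbb(a_j, a_{j'})$. Since $x_1 \neq x_2$ is a quantifier-free $\L'$-formula true of $(i,i')$ in $\Ncal$, it is also true of $(j,j')$, so $j \neq j'$ and hence $a_j \neq a_{j'}$ by injectivity of $\mathcal{I}$; therefore $(x_1 \neq x_2) \in \tp^\Delta_\Mbb(a_j, a_{j'}) = \tp^\Delta_\Mbb(b_i, b_{i'})$, i.e. $b_i \neq b_{i'}$. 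Thus $\mathcal{J}$ is a non-constant $\Ncal$-indiscernible sequence of singletons in the infinite linear order $\Mbb$, which is exactly condition~(\labelcref{existence items:singleton inds exist in LO}) of \Cref{existence of inds}. Hence $\Age(\Ncal)$, and therefore $\Ncal$ itself, is $\L'_{\infty,0}$-orderable.

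This argument has no real obstacle, which is why the corollary is ``almost immediate''. The only points requiring a moment's care are the non-constancy check above (a short argument exploiting that equality, being quantifier-free, is transported along the ``based on'' relation) and the routine bookkeeping of choosing $\kappa(\Mbb)$ large enough to receive an injection from $N$. Alternatively, one can run the same argument with an arbitrary $\L$-theory $T$ having infinite models, verifying condition~(\labelcref{existence items:singleton inds exist in every theory}) of \Cref{existence of inds} instead; this is harmless since $\Ncal$ has MP in every theory, but fixing $T := \DLO$ keeps the proof shortest.
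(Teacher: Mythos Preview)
Your proof is correct and is precisely the argument the paper has in mind: the corollary is stated as ``almost immediate'' from \Cref{existence of inds}, and you have simply spelled out the verification of condition~(\labelcref{existence items:singleton inds exist in LO}) by applying MP in $\DLO$, together with the small non-constancy check. There is nothing to add.
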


We will use this \namecref{mp-implies-ordered-expansion}, to show that the order assumption in \Cref{thm:scow} is not necessary. In particular, the key property that we obtain from the \namecref{mp-implies-ordered-expansion} is that if $\Ncal_o$ is the $\L'_{\infty,0}$-definable ordered expansion of $\Ncal$ then, for all tuples $\bar a,\bar b$ from $N$ we have that $\qftp_{\Ncal}(\bar{a})=\qftp_{\Ncal}(\bar{b})$ if, and only if, $\qftp_{\Ncal_o}(\bar{a})=\qftp_{\Ncal_o}(\bar{b})$.

\section{Removing Assumptions From Scow's Theorem}\label{sec:removing ass}
The goal of this section is to show that in \Cref{thm:scow}, the assumptions of (1) $\Ncal$ having QFI, and (2) $\Ncal$ being linearly ordered can be removed. We will first show that the QFI assumption can be removed (\Cref{sec:qfi ass}), by using \emph{quantifier-free type Morleyisations} (as introduced in \Cref{def:qf-type-Morleyisation}). Then, we will show that the order assumption can be removed in the case $\Age(\Ncal)$ is countable (\Cref{sec:reduce to countable}), and using this we will finally show that this can be done without any assumptions on the cardinality of $\Age(\Ncal)$. The main result of this section is \Cref{rc mp}.

\subsection{The QFI assumption}\label{sec:qfi ass}
Let us start this \namecref{sec:qfi ass} by pointing out some basic results regarding the quantifier-free type Morleyisation. The following remark is immediate from the definition of the quantifier-free type Morleyisation:

\begin{remark}\label{qf-type-Morleyzation-qfi}
    Let $\M$ be an $\L$-structure. Then $\widehat{\M}_\mathsf{qfi}$ has QFI.
\end{remark}

The next lemma is the analogue of \Cref{same-qf-types-MP}, for ERP:

\begin{lemma}\label{same-qf-types-ERP}
    Let $\cM_1, \cM_2$ be structures in languages $\cL_1,\cL_2$, respectively, with the same domain $M$. Assume 
    $\qftp_{\cM_1}(\bar{a})=\qftp_{\cM_1}(\bar{b})$ if, and only if, $\qftp_{\cM_2}(\bar{a})=\qftp_{\cM_2}(\bar{b})$
    for every $\bar{a},\bar{b}\in M$. Then $\cM_1$ has ERP if, and only if, $\cM_2$ has ERP.
\end{lemma}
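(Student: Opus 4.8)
The plan is to transfer ERP by showing that $\Age(\cM_1)$ and $\Age(\cM_2)$ carry essentially the same partition data, which is determined by quantifier-free types alone. First, from the hypothesis the relation ``$\bar a$ and $\bar b$ realise the same quantifier-free type'' on finite tuples from $M$ is the same whether computed in $\cM_1$ or in $\cM_2$, so there is a bijection $\iota$ between the quantifier-free types realised in $\cM_1$ and those realised in $\cM_2$, determined by $\iota(\qftp_{\cM_1}(\bar a))=\qftp_{\cM_2}(\bar a)$ and compatible with restriction to subtuples. The key observation is that every member of $\Age(\cM_i)$ is isomorphic to some $\langle\bar a\rangle_{\cM_i}$, and that for $A=\langle\bar a\rangle_{\cM_i}$ with $A\subseteq C$ the set $\binom{C}{A}$ is in canonical bijection with the set of tuples $\bar d$ from $C$ satisfying $\qftp_{\cM_i}(\bar d)=\qftp_{\cM_i}(\bar a)$: an embedding is determined by the image of the generators $\bar a$, and conversely any $\bar d$ of that quantifier-free type extends back to an embedding because $C$ is closed under the $\cL_i$-functions. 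Hence the objects of $\Age(\cM_i)$, the embedding-sets between them, and the way sub-copies sit inside copies are all controlled by quantifier-free types.

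Next I would reduce to the relational case. By \Cref{qf-isolator-same-types}, the isolators $(\cM_1)_{\mathsf{iso}}$ and $(\cM_2)_{\mathsf{iso}}$ have the same quantifier-free type equivalence relation as $\cM_1$ and $\cM_2$; being relational with relation symbols indexed by exactly these equivalence classes, they agree up to a bijective renaming of relation symbols, so $\Age((\cM_1)_{\mathsf{iso}})$ has ERP iff $\Age((\cM_2)_{\mathsf{iso}})$ does. It remains to show that $\Age(\cM)$ has ERP iff $\Age(\cM_{\mathsf{iso}})$ has ERP, for any locally finite $\cM$. Here $\Age(\cM_{\mathsf{iso}})$ is the class of all finite subsets of $M$ with the quantifier-free types of their tuples recorded, and $\Age(\cM)$ is the subclass of $\cL$-function-closed such sets, which is cofinal. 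If $\Age(\cM_{\mathsf{iso}})$ has ERP and $A\subseteq B$ lie in $\Age(\cM)$, then given an $\cL_{\mathsf{iso}}$-witness $C^\circ$ the $\cL$-closure $\langle C^\circ\rangle_{\cM}\in\Age(\cM)$ still works: restrict any colouring of $\binom{\langle C^\circ\rangle_{\cM}}{A}$ to $\binom{C^\circ}{A}$ and apply the arrow to get a monochromatic copy of $B$ already inside $C^\circ\subseteq\langle C^\circ\rangle_{\cM}$. Conversely, if $\Age(\cM)$ has ERP and $A^\circ\subseteq B^\circ$ lie in $\Age(\cM_{\mathsf{iso}})$, apply ERP in $\Age(\cM)$ to $\langle A^\circ\rangle_{\cM}\subseteq\langle B^\circ\rangle_{\cM}$, take $C^\circ$ to be the resulting witness read in $\cL_{\mathsf{iso}}$, lift a colouring of $\binom{C^\circ}{A^\circ}$ to one of $\binom{C^\circ}{\langle A^\circ\rangle_{\cM}}$ via the bijections above, and observe that inside a monochromatic copy of $\langle B^\circ\rangle_{\cM}$ the copies of $A^\circ$ supported on the entries of its generating tuple form a subset of the copies of $\langle A^\circ\rangle_{\cM}$, hence remain monochromatic.

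I expect this last point to be the main obstacle: once $\cL$ has function symbols, $\cM$ and $\cM_{\mathsf{iso}}$ genuinely differ on finitely generated substructures — a single generator of $\cM$ can produce new elements — so one cannot simply assert ``the ages are isomorphic''. The content is that these extra generated elements, being $\L_{\infty,0}$-definable over the generators, can only shrink the relevant families of sub-copies and therefore cannot destroy monochromaticity. A secondary caveat is that the isolator is always relational, so $\Age(\cM_{\mathsf{iso}})$ has only finite members and the argument as phrased assumes $\cM_1,\cM_2$ locally finite; this is the case in all our applications, and in particular whenever one of $\cM_1,\cM_2$ is a relational expansion of the other, in which case the first paragraph alone already gives an isomorphism $\Age(\cM_1)\cong\Age(\cM_2)$ preserving embeddings and the partition arrow ${\arrows CB}^A_r$. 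Removing the hypothesis in general requires repeating the same bookkeeping with finitely generated rather than finite structures. Finally, by the symmetry of the hypothesis it suffices to prove a single implication.
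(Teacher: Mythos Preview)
Your argument is correct, but it is far more elaborate than the paper's. The paper dispatches the lemma in one sentence: for any subsets $A,B\subseteq M$, writing $\cA_i,\cB_i$ for the $\cL_i$-structures induced on $A,B$, one has $\binom{\cB_1}{\cA_1}=\binom{\cB_2}{\cA_2}$ literally as sets of maps $A\to B$, because an embedding is nothing more than a quantifier-free-type--preserving injection, and by hypothesis that relation is identical in the two languages. Since the partition arrow $\arrows{C}{B}^A_2$ is a statement purely about these embedding sets, ERP transfers. This is exactly the content of your first paragraph, without the detour through tuples.

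Your additional machinery --- passing to the isolator and separately verifying that $\Age(\cM)$ has ERP iff $\Age(\cM_{\mathsf{iso}})$ does --- addresses a subtlety the paper's one-liner glosses over: when $\cL_1$ and $\cL_2$ carry \emph{different} function symbols, a finitely generated $\cL_1$-substructure need not be a finitely generated $\cL_2$-substructure, so the two ages are genuinely different classes and one cannot simply match up the arrow relations object-by-object. You correctly identify this and note that in every use the paper makes of the lemma one structure is a relational expansion of the other, whence the substructure notions coincide and the short argument is already complete. Your isolator reduction buys the general (locally finite) case; the paper does not spell this out here, but later records essentially the same reduction as \Cref{rmk:isolator}\,(\labelcref{rp-iff-ferp}). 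So both approaches are sound; the paper's is terser because it is content with the level of generality it actually needs, while yours is explicit about where the extra care is required.
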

\begin{proof}
    The \namecref{same-qf-types-ERP} follows from the definition of ERP and from the fact that for any two subsets $A,B\subseteq M$, and $i\in \Set{1,2}$, letting $\cA_i$ and $\cB_i$ be the substructures of $\cM_i$ induced on $A$ and $B$, respectively, $\binom{\cB_1}{\cA_1} = \binom{\cB_2}{\cA_2} $.
\end{proof}

It is almost immediate that the QFI assumption from \Cref{thm:scow} can be removed. More precisely:

\begin{corollary}\label{cor:scow-no-qfi}
     \Cref{thm:scow} holds without assuming that $\Ncal$ has QFI. 
\end{corollary}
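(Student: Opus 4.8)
The plan is to apply \Cref{thm:scow} not to $\Ncal$ itself but to its quantifier-free type Morleyisation $\widehat{\Ncal}_\mathsf{qfi}$, which by \Cref{qf-type-Morleyzation-qfi} does have QFI, and then to transfer both sides of the resulting equivalence back to $\Ncal$.

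First I would check that $\widehat{\Ncal}_\mathsf{qfi}$ still satisfies all the remaining hypotheses of \Cref{thm:scow}. Its language $\widehat{\L^\prime}_\mathsf{qfi}$ contains $\L^\prime$, hence contains $\leq$, and since the interpretation of $\leq$ is unchanged we still have $(\widehat{\Ncal}_\mathsf{qfi},\leq)\models\LO$. Its domain is that of $\Ncal$, so it is infinite. Finally, since all of the new symbols are relation symbols, for every finite $A\subseteq N$ the substructure $\langle A\rangle_{\widehat{\Ncal}_\mathsf{qfi}}$ has the same underlying set as $\langle A\rangle_\Ncal$, which is finite; hence $\widehat{\Ncal}_\mathsf{qfi}$ is locally finite, and $\widehat{\C}:=\Age(\widehat{\Ncal}_\mathsf{qfi})$ is a class of finite $\widehat{\L^\prime}_\mathsf{qfi}$-structures. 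So \Cref{thm:scow} applies to $\widehat{\Ncal}_\mathsf{qfi}$: $\widehat{\C}$ is a Ramsey class if, and only if, $\widehat{\Ncal}_\mathsf{qfi}$-indiscernibles have the modelling property.

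It then remains to transfer the two conditions between $\Ncal$ and $\widehat{\Ncal}_\mathsf{qfi}$. These two structures have the same domain and, by \Cref{qf-type-Morleyzation-same-types}, induce the same equality-of-quantifier-free-types relation on it; so \Cref{same-qf-types-MP} gives that $\Ncal$ has MP iff $\widehat{\Ncal}_\mathsf{qfi}$ has MP, and \Cref{same-qf-types-ERP} gives that $\Age(\Ncal)$ has ERP iff $\widehat{\C}$ has ERP. Moreover, for any locally finite structure $\M$ the age $\Age(\M)$ automatically has HP (a substructure of a finite structure is again finite, hence finitely generated) and JEP (given $A,B\in\Age(\M)$, realise them as finitely generated substructures and amalgamate inside $\langle A\cup B\rangle_\M$, which is finite), so for each of $\Ncal$ and $\widehat{\Ncal}_\mathsf{qfi}$ the property ``is a Ramsey class'' is equivalent to ``has ERP''. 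Chaining everything together: $\C=\Age(\Ncal)$ is a Ramsey class $\iff$ $\Age(\Ncal)$ has ERP $\iff$ $\widehat{\C}$ has ERP $\iff$ $\widehat{\C}$ is a Ramsey class $\iff$ (\Cref{thm:scow} applied to $\widehat{\Ncal}_\mathsf{qfi}$) $\widehat{\Ncal}_\mathsf{qfi}$-indiscernibles have MP $\iff$ $\Ncal$-indiscernibles have MP, which is the desired equivalence. There is no real obstacle here: the only things needing attention are the elementary verifications that the Morleyisation inherits infiniteness, local finiteness and the distinguished linear order from $\Ncal$ (so that \Cref{thm:scow} genuinely applies to it), together with the bookkeeping observation that for locally finite ages ``Ramsey class'' collapses to ``has ERP'', which is precisely what \Cref{same-qf-types-ERP} transfers.
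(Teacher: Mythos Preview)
Your proposal is correct and follows essentially the same route as the paper's proof: apply \Cref{thm:scow} to $\widehat{\Ncal}_\mathsf{qfi}$ (which has QFI by \Cref{qf-type-Morleyzation-qfi}) and use \Cref{qf-type-Morleyzation-same-types} together with \Cref{same-qf-types-MP,same-qf-types-ERP} to transfer both sides of the equivalence back to $\Ncal$. The paper's argument is terser; you additionally spell out the verification that $\widehat{\Ncal}_\mathsf{qfi}$ inherits the remaining hypotheses of \Cref{thm:scow} (infiniteness, local finiteness, the distinguished linear order) and the observation that for locally finite ages HP and JEP are automatic, so that ``Ramsey class'' reduces to ``has ERP'' --- points the paper leaves implicit.
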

\begin{proof}
    By \Cref{qf-type-Morleyzation-same-types} we have that $\Ncal$ and $\widehat{\Ncal}_\mathsf{qfi}$ satisfy the assumptions of \Cref{same-qf-types-MP,same-qf-types-ERP}. Thus, it suffices to show that the \namecref{cor:scow-no-qfi} holds for $\widehat{\Ncal}_\mathsf{qfi}$, but this is immediate from \Cref{thm:scow} and \Cref{qf-type-Morleyzation-qfi}.
\end{proof}

\subsection{Removing the order assumption}

We start by discussing the \emph{countable} case, that is, we prove \Cref{rc mp} when $\Age(\Ncal)$ is countable.

\begin{lemma}\label{countable rc order}
    Let $\Ncal$ be an $\L^\prime$-structure with the embedding Ramsey property and assume that $\Age(\Ncal)$ is countable. Then $\Ncal$ is $\L_{\infty,0}$-orderable.
\end{lemma}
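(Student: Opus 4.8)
The plan is to reduce to a countable Fraïssé class of finite \emph{relational} structures, where the Kechris--Pestov--Todorcevic correspondence (\Cref{thm:kpt}) is available, and then read off the desired formula from ultrahomogeneity. Concretely, I would first pass to the quantifier-free type isolator $\Ncal_\mathsf{iso}$ (\Cref{def:isolator}). Since $\L_\mathsf{iso}$ is relational, finitely generated substructures of $\Ncal_\mathsf{iso}$ are finite, so $\C := \Age(\Ncal_\mathsf{iso})$ is a class of finite $\L_\mathsf{iso}$-structures; and it is countable, since $\Age(\Ncal)$ being countable means there are only countably many quantifier-free types realised in $\Ncal$, hence $\L_\mathsf{iso}$ is countable. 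By \Cref{qf-isolator-same-types} and \Cref{same-qf-types-ERP}, $\Ncal_\mathsf{iso}$ still has ERP; being an age, $\C$ has HP and JEP (any two finitely generated substructures of $\Ncal_\mathsf{iso}$ embed into the substructure generated by their union), so $\C$ is a Ramsey class in the sense of \Cref{def:rc}. By \Cref{RC implies AP} it has AP, so $\C$ is a non-empty countable Fraïssé class. By \Cref{qf-isolator-same-types} it then suffices to show $\Ncal_\mathsf{iso}$ is $\L_{\mathsf{iso},\infty,0}$-orderable and transfer back.

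Let $\M := \Flim(\C)$ via \Cref{fraisse}, so $\Age(\M) = \C$. Since $\C$ has ERP, all its members are rigid by \Cref{ramsey classes have rigid elts}, so condition (2) of \Cref{thm:kpt} holds, and hence condition (3) yields an $\Aut(\M)$-invariant linear order $\preceq$ on $M$. Ultrahomogeneity of $\M$ turns this into a union of quantifier-free types: if $\qftp_\M(a,b) = \qftp_\M(a',b')$ then $a \mapsto a',\ b \mapsto b'$ is an isomorphism of the (finite) substructures $\langle a,b\rangle_\M$ and $\langle a',b'\rangle_\M$, hence extends to an automorphism of $\M$, so whether $a \preceq b$ holds depends only on $\qftp_\M(a,b)$. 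Writing $P$ for the set of quantifier-free $2$-types $p(x,y)$ realised in $\M$ with $a \preceq b$ for $(a,b) \models p$, the formula $\Phi_\mathsf{iso}(x,y) := \bigvee_{p \in P} \bigwedge_{\psi \in p} \psi(x,y)$, with $\psi$ ranging over atomic and negated atomic $\L_\mathsf{iso}$-formulas, is an $\L_{\mathsf{iso},\infty,0}$-formula defining $\preceq$ on $\M$.

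Now every $A \in \C = \Age(\M)$ embeds into $\M$ as a substructure, and $\Phi_\mathsf{iso}$ is quantifier-free, so $\Phi_\mathsf{iso}$ restricts to a linear order on each $A \in \C$; thus $\Ncal_\mathsf{iso}$ is $\L_{\mathsf{iso},\infty,0}$-orderable. To finish, I would substitute into $\Phi_\mathsf{iso}$, for each symbol $R_p \in \L_\mathsf{iso}$, its $\L^\prime_{\infty,0}$-definition $\bigwedge_{\psi \in p}\psi$ in $\Ncal$. This produces an $\L^\prime_{\infty,0}$-formula $\Phi(x,y)$ which, since $\Ncal$ and $\Ncal_\mathsf{iso}$ share a domain and the $R_p$ are interpreted accordingly, is a linear order on $\Ncal$, and hence --- being quantifier-free --- on every structure in $\Age(\Ncal)$. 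This shows that $\Ncal$ is $\L^\prime_{\infty,0}$-orderable.

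The conceptual heart is the passage through \Cref{thm:kpt} together with the observation that ultrahomogeneity upgrades an $\Aut(\M)$-invariant linear order to a union of quantifier-free types; this part is essentially the countable case of \cite{Bodirsky_2015}, which one could cite directly instead. I expect the main (if routine) obstacle to be the bookkeeping: verifying that $\Age(\Ncal_\mathsf{iso})$ really is a \emph{countable} Fraïssé class of finite structures --- in particular the countability of $\L_\mathsf{iso}$, which is transparent when $\L^\prime$ is countable but needs a word otherwise --- and checking that the back-substitution into $\Ncal$ genuinely yields an $\L^\prime_{\infty,0}$-formula that linearly orders all of $\Age(\Ncal)$.
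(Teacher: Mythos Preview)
Your proposal is correct and follows essentially the same route as the paper: pass to a countable Fraïssé class, take its Fraïssé limit, obtain an $\Aut$-invariant linear order (the paper cites \cite[Proposition~2.25]{Bodirsky_2015}; you invoke \Cref{thm:kpt} directly), and read off an $\L'_{\infty,0}$-formula from ultrahomogeneity. Your detour through the isolator $\Ncal_\mathsf{iso}$ is not needed---the paper applies Fraïssé's theorem directly to $\Age(\Ncal)$ and then argues, exactly as you do, that the resulting formula orders every member of the age---but it is harmless.
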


\begin{proof}
    The result follows by combining \Cref{RC implies AP}, to obtain that $\C$ is a Fraïssé class, \Cref{fraisse} to obtain the ultrahomogeneous Fraïssé limit $\Ncal^\prime$ and \cite[Proposition 2.25]{Bodirsky_2015}, which says that ultrahomogeneous structures whose ages are Ramsey classes admit automorphism-invariant linear orders. In particular, we obtain an $\Aut(\Ncal^\prime)$-invariant order $\preceq$ on $\Ncal^\prime$.
    
    It is then clear, from the fact that $\Ncal^\prime$ is ultrahomogeneous that $\preceq$ is given by a (possibly infinite) Boolean combination of atomic or negated atomic $\L^\prime$-formulas, thus $\Age(\Ncal)$ is $\L^\prime_{\infty,0}$-orderable.
    
    Let $\Phi(x,y)$ be the $\L_{\infty,0}$-formula which defines $\preceq$ on $\Ncal^\prime$. We claim that $\Phi(x,y)$ defines a linear order on $\Ncal$, as well. By construction, $\Age(\Ncal) = \Age(\Ncal^\prime) = \C$, and $\Phi(x,y)$ defines a linear order on each $A\in\C$. Assume for a contradiction that $\Phi(x,y)$ does not define a linear order on $\Ncal$, then there is a finite substructure $A\subseteq\Ncal$ on which $\Phi(x,y)$ is not a linear order, but $A\in\C$, so, indeed, we have a contradiction.
\end{proof}

Combining the lemma above with \Cref{thm:scow} we obtain the following corollaries:

\begin{corollary}\label{countable: rc implies mp}
    Let $\L^\prime$ be a first-order language, $\Ncal$ an infinite locally finite $\L^\prime$-structure such that $\Age(\Ncal)$ is countable. If $\Ncal$ has the embedding Ramsey property, then it has the modelling property.
\end{corollary}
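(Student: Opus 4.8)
The plan is to reduce to Scow's theorem in its QFI-free form, \Cref{cor:scow-no-qfi}, by passing to an explicitly ordered expansion of $\Ncal$. First I would apply \Cref{countable rc order}: since $\Ncal$ has ERP and $\Age(\Ncal)$ is countable, there is an $\L^\prime_{\infty,0}$-formula $\Phi(x,y)$ defining a linear order $\preceq$ on $\Ncal$. Let $\Ncal_o := (\Ncal,\preceq)$ be the expansion of $\Ncal$ to $\L^\prime\cup\Set{\leq}$ in which $\leq$ is interpreted as $\preceq$. Since $\leq$ is a relation symbol, $\Ncal_o$ and $\Ncal$ have the same generated substructures, so $\Ncal_o$ is again infinite, locally finite, and has countable age, and $(\Ncal_o,\leq)\models\LO$ by construction. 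Moreover, because $\Phi$ is quantifier-free (albeit infinitary) and parameter-free, the pattern of $\leq$ on a tuple is determined by its $\L^\prime$-quantifier-free type, so for all tuples $\bar a,\bar b$ from $N$ we have $\qftp_{\Ncal}(\bar a)=\qftp_{\Ncal}(\bar b)$ if and only if $\qftp_{\Ncal_o}(\bar a)=\qftp_{\Ncal_o}(\bar b)$; that is, $\Ncal$ and $\Ncal_o$ satisfy the hypotheses of \Cref{same-qf-types-MP,same-qf-types-ERP}.

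Next I would check that $\Age(\Ncal_o)$ is a Ramsey class. It has HP and JEP for free: any age has HP, and since $\Ncal_o$ is locally finite, any two finite substructures of $\Ncal_o$ embed into the finite substructure they jointly generate, which gives JEP. By \Cref{same-qf-types-ERP} and the quantifier-free type equivalence established above, $\Ncal_o$ has ERP (equivalently, $\Age(\Ncal_o)$ has ERP) because $\Ncal$ does. Hence $\Age(\Ncal_o)$ is a Ramsey class.

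Finally, $\Ncal_o$ meets all the hypotheses of \Cref{cor:scow-no-qfi}: the language $\L^\prime\cup\Set{\leq}$ carries the distinguished binary relation $\leq$, $\Ncal_o$ is infinite and locally finite, and $(\Ncal_o,\leq)\models\LO$. Applying that corollary to the Ramsey class $\Age(\Ncal_o)$ yields that $\Ncal_o$-indiscernibles have the modelling property, and then \Cref{same-qf-types-MP}, together with the quantifier-free type equivalence, transfers this back to $\Ncal$, so that $\Ncal$-indiscernibles have the modelling property. The only step needing genuine care is the displayed quantifier-free type equivalence, which is exactly what legitimises moving ERP and MP between $\Ncal$ and $\Ncal_o$; the rest --- stability of local finiteness and countability of the age under a relational expansion, and the HP/JEP remarks --- is routine, and the substantive content is black-boxed in \Cref{countable rc order} and \Cref{cor:scow-no-qfi}.
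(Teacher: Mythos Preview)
Your proposal is correct and follows essentially the same route as the paper: invoke \Cref{countable rc order} to obtain the $\L'_{\infty,0}$-definable order, transfer ERP to $(\Ncal,\preceq)$ via \Cref{same-qf-types-ERP}, apply \Cref{cor:scow-no-qfi} to get MP for the ordered expansion, and transfer MP back via \Cref{same-qf-types-MP}. You simply spell out the routine checks (preservation of local finiteness and countable age, HP/JEP, and the quantifier-free type equivalence) that the paper leaves implicit.
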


\begin{proof}
    Observe that if $\age(\cN)$ is a countable Ramsey class, then by \Cref{countable rc order}, there is an $\L'_{\infty,0}$-definable linear order $\preceq$ on $\Ncal$. By \Cref{same-qf-types-ERP}, we have that $(\Ncal,\preceq)$ has ERP since $\Ncal$ has ERP, so by \Cref{cor:scow-no-qfi} we have that $(\Ncal,\preceq)$ has MP. Finally, by \Cref{same-qf-types-MP}, $\cN$ has MP.
\end{proof}

\begin{corollary}\label{countable: mp implies rc}
    Let $\L^\prime$ be a first-order language and $\Ncal$ an infinite locally finite $\L^\prime$-structure. If $\Ncal$ has the modelling property, then $\Ncal$ has the embedding Ramsey property.
\end{corollary}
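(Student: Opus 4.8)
The plan is to reduce to \Cref{cor:scow-no-qfi} after adjoining a definable linear order to $\Ncal$. First I would apply \Cref{mp-implies-ordered-expansion}: since $\Ncal$ has MP it is $\L'_{\infty,0}$-orderable, so there is an $\L'_{\infty,0}$-formula $\Phi(x,y)$ that is a linear order on every structure in $\Age(\Ncal)$. The next point is that $\Phi$ is then a linear order on $\Ncal$ itself: each of the (finitely many) linear order axioms is a universal sentence whose matrix is a Boolean combination of $\Phi(x,y)$ and equalities, so a failure in $\Ncal$ would be witnessed by a finite tuple and hence by the finite substructure it generates (this is where local finiteness is used), contradicting that $\Phi$ is a linear order on all of $\Age(\Ncal)$. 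I would then let $\L'' := \L'\cup\Set{\leq}$ and let $\Ncal_o$ be the $\L''$-expansion of $\Ncal$ interpreting $\leq$ as $\Phi^{\Ncal}$; it is infinite (it has the same domain as $\Ncal$), locally finite (adjoining a relation symbol does not change which substructures are finitely generated), and satisfies $(\Ncal_o,\leq)\vDash\LO$ by construction.

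Next I would transfer the modelling property. Since $\leq$ is $\L'_{\infty,0}$-definable in $\Ncal$, for all tuples $\bar a,\bar b$ from $N$ we have $\qftp_\Ncal(\bar a)=\qftp_\Ncal(\bar b)$ if, and only if, $\qftp_{\Ncal_o}(\bar a)=\qftp_{\Ncal_o}(\bar b)$ (left-to-right because the $\L'$-quantifier-free type determines the truth of $\Phi$, right-to-left because $\L'\subseteq\L''$); this is exactly the observation recorded after \Cref{mp-implies-ordered-expansion}. By \Cref{same-qf-types-MP}, $\Ncal_o$ therefore has MP as well.

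Finally I would apply \Cref{cor:scow-no-qfi} to $\Ncal_o$, with distinguished binary symbol $\leq\in\L''$ and with $\C:=\Age(\Ncal_o)$: all hypotheses hold, so $\Ncal_o$-indiscernibles have MP if, and only if, $\C$ is a Ramsey class. Since $\Ncal_o$ has MP, $\C$ is a Ramsey class, so in particular $\C$ has ERP, i.e.\ $\Ncal_o$ has ERP. Then \Cref{same-qf-types-ERP}, applied via the same equality-of-quantifier-free-types equivalence between $\Ncal$ and $\Ncal_o$, transfers ERP back to $\Ncal$, completing the proof. The only step requiring any care is showing that the $\L'_{\infty,0}$-order supplied by \Cref{mp-implies-ordered-expansion} is a genuine linear order on all of $\Ncal$, and not merely on its finitely generated substructures, which is the one place the standing local-finiteness hypothesis is invoked; everything else is bookkeeping via \Cref{same-qf-types-MP,same-qf-types-ERP} and \Cref{cor:scow-no-qfi}.
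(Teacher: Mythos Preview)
Your proposal is correct and follows essentially the same route as the paper: use \Cref{mp-implies-ordered-expansion} to obtain an $\L'_{\infty,0}$-definable order, pass to the ordered expansion $\Ncal_o$, transfer MP via \Cref{same-qf-types-MP}, apply \Cref{cor:scow-no-qfi}, and transfer ERP back via \Cref{same-qf-types-ERP}. The paper's proof is terser (it leaves the final \Cref{same-qf-types-ERP} step implicit), but the content is the same.

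One small correction on where local finiteness actually enters: the step showing $\Phi$ linearly orders all of $\Ncal$ does not really need it, since $\Age(\Ncal)$ by definition consists of all finitely generated substructures and any failure of the linear-order axioms is witnessed inside one of those. Local finiteness is, however, genuinely required when you invoke \Cref{cor:scow-no-qfi}, which inherits that hypothesis from \Cref{thm:scow}; so your remark that this is ``the only place'' it is invoked mislocates the hypothesis. This does not affect the validity of your argument.
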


\begin{proof}
    By \Cref{mp-implies-ordered-expansion} we may also assume that $\Ncal$ is linearly ordered and still has MP, since the order is $\L'_{\infty,0}$-definable, by \Cref{same-qf-types-MP}. Finally, the result follows from \Cref{cor:scow-no-qfi}.
\end{proof}

\subsubsection{Reduction to countability} \label{sec:reduce to countable}

The main result of this \namecref{sec:reduce to countable} is the following proposition:

\begin{proposition}\label{MP for union of countable}
    Let $\cN$ be an $\L^\prime$-structure, and let $\Set{\cN_i | i\in I}$ be $\L^\prime$-structures, such that:
    \begin{enumerate}
        \item $\cN = \bigcup_{i\in I}\cN_i$.
        \item For every finite $A\subseteq \cN$, there is some $i\in I$, such that $A\subseteq \cN_i$.
    \end{enumerate}
    If for all $i\in I$ we have that $\cN_i$ has the modelling property, then $\cN$ has the modelling property.
\end{proposition}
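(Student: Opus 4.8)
The plan is to restrict the given sequence to each $\cN_i$, apply the modelling property of $\cN_i$ there, and glue the resulting $\cN_i$-indiscernible sequences into one $\cN$-indiscernible sequence along a suitable ultrafilter on $I$. (We may assume each $\cN_i$ is a substructure of $\cN$.)

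First I would fix a complete $\L$-theory $T$, a sufficiently saturated $\Mbb\models T$, and an $\cN$-indexed sequence $\cI=(\bar a_i:i\in\cN)$ of same-length tuples from $\Mbb$; the task is to produce an $\cN$-indiscernible $\cJ$ based on $\cI$. For finite $A\subseteq\cN$ let $U_A:=\Set{i\in I | A\subseteq\cN_i}$; by hypothesis~(2) each $U_A$ is nonempty, and $U_A\cap U_B=U_{A\cup B}$, so $\Set{U_A | A\subseteq\cN\text{ finite}}$ is a filter base. Fix an ultrafilter $\cU$ on $I$ extending it. For each $i\in I$ the restriction $\cI_i:=(\bar a_k:k\in\cN_i)$ is an $\cN_i$-indexed sequence in $\Mbb$, so, using that $\cN_i$ has the modelling property in $T$, I fix an $\cN_i$-indiscernible sequence $\cJ_i=(\bar b^i_k:k\in\cN_i)$ in $\Mbb$ based on $\cI_i$.

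Next I would build the glued EM-type. For a finite tuple $\bar j=(j_1,\dots,j_n)$ from $\cN$ and any $i$ in the set $U_{\Set{j_1,\dots,j_n}}\in\cU$, the tuple $(\bar b^i_{j_1},\dots,\bar b^i_{j_n})$ is defined, so I set $r_{\bar j}:=\lim_{\cU}\tp_{\Mbb}(\bar b^i_{j_1},\dots,\bar b^i_{j_n})$, the ultralimit taken in the compact Stone space $S_n(T)$. The routine points to record --- all of them consequences of $\cN_i$-indiscernibility of $\cJ_i$, of the absoluteness of quantifier-free formulas between $\cN_i$ and $\cN$, and of the continuity of the restriction maps between Stone spaces (so that these commute with $\lim_{\cU}$) --- are: (a) $r_{\bar j}$ depends only on $p:=\qftp_{\cN}(\bar j)$, so I may write $r_{\bar j}=q(p)$; (b) $q$ is coherent (it respects passing to subtuples and reindexing of variables); (c) each $q(p)$ is a complete consistent type over $\emptyset$. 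By (b) and (c) the partial type $\Gamma\bigl((y_j)_{j\in\cN}\bigr):=\bigcup_{\bar j}q(\qftp_{\cN}(\bar j))(y_{j_1},\dots,y_{j_n})$ is finitely satisfiable (every finite fragment lies inside a single $q(p)$), hence is realised in $\Mbb$ by some $\cJ=(\bar b_j:j\in\cN)$. Indiscernibility of $\cJ$ is then immediate from (a) and (c). For ``based on $\cI$'' I would fix a finite $\Delta\subseteq\L$, an $n$, and $\bar j$ from $\cN$ with $p=\qftp_{\cN}(\bar j)$: as there are finitely many $\Delta$-$n$-types, $V:=\Set{i\in U_{\Set{j_1,\dots,j_n}} | \tp^\Delta_{\Mbb}(\bar b^i_{\bar j})=q(p)^\Delta}$ lies in $\cU$; picking $i\in V$ and using that $\cJ_i$ is based on $\cI_i$ gives $\bar j'$ from $\cN_i\subseteq\cN$ with $\qftp_{\cN}(\bar j)=\qftp_{\cN}(\bar j')$ and $\tp^\Delta_{\Mbb}(\bar b_{\bar j})=q(p)^\Delta=\tp^\Delta_{\Mbb}(\bar b^i_{\bar j})=\tp^\Delta_{\Mbb}(\bar a_{\bar j'})$, as needed.

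I expect the main obstacle to be the gluing itself: one must choose $\cU$ refining the ``cofinal'' filter of the $U_A$ (so that every finite tuple of $\cN$ eventually sits inside some $\cN_i$) and then check that the ultralimit of the individual EM-types does assemble into a single coherent --- and genuinely indiscernible --- EM-type over $\cN$. The two facts carrying that check are that quantifier-free types are absolute in substructures and that finitely many formulas give finitely many $\Delta$-types, the latter being precisely what allows the ``based on'' condition to survive passage to the ultralimit.
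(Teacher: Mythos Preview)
Your argument is correct, but it takes a genuinely different route from the paper's. The paper first isolates a finitary reformulation of the modelling property (\Cref{MP iff ind fin satisfiable}): an $\cN$-indiscernible based on $\cI$ exists if and only if the partial type $\ind(\cN,\L)$ is \emph{finitely satisfiable} in $\cI$ in the sense of \Cref{def:scow fin satisfiable}. With this in hand the proof of the proposition is almost immediate: given finite $\Gamma_0\subseteq\ind(\cN,\L)$ and finite $A\subseteq N$, pick $i$ with $A\subseteq\cN_i$; since $\cN_i$ has the modelling property, $\ind(\cN_i,\L)$ is finitely satisfiable in $\cI\restr{\cN_i}$, and the resulting witnesses work verbatim for $\cN$ because quantifier-free types are absolute between $\cN_i$ and $\cN$. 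No ultrafilter, no gluing of EM-types --- the compactness is hidden once and for all inside the equivalence lemma.

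Your approach instead performs the compactness argument by hand: you build the $\cN_i$-indiscernibles $\cJ_i$ first and then take an ultralimit of their EM-types along an ultrafilter refining the filter of the $U_A$. This is more laborious (points (a)--(c), coherence, the ``based on'' verification) but has the virtue of being self-contained and of producing the EM-type of $\cJ$ explicitly. The key observations driving both proofs are the same --- absoluteness of quantifier-free types in substructures, and the fact that the ``based on'' condition is finitary --- but the paper packages them into a reusable lemma, while you unfold the argument directly at the level of types.
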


We recall the following from \cite{Scow_2015}:

\begin{definition}[{\cite[Definition 2.10]{Scow_2015}}]\label{def:ind}
    Let $\L^\prime$ be a first-order language and fix an $\L^\prime$-structure $\Ncal$. We define $\ind(\Ncal, \cL)$ to
    be the following partial type in an $\Ncal$-indexed sequence of variables $(x_i:i\in\Ncal)$:
    \begin{align*}
        \ind(\Ncal,\cL)(x_i:i\in \Ncal):= 
        \large\{
        &\vphi(x_{i_1},\dots, x_{i_n})\limplies\vphi(x_{j_1},\dots, x_{j_n}) : n<\omega, \bar{i}, \bar{j}\in N^n, \\
        & \qftp_{\Ncal}\left(\bar{i}\right) = \qftp_{\Ncal}\left(\bar{j}\right),\, \vphi(x_1,\dots, x_n)\in \cL 
        \large\}.
    \end{align*}
\end{definition}

The following is a variant of {\cite[Definition 2.11]{Scow_2015}}.

\begin{definition}[Finitely satisfiable]\label{def:scow fin satisfiable}
    Let $\Gamma(x_i : i\in I)$ be a (not necessarily complete) $\L$-type, and let $\mathcal{I} = (a_i : i\in \Ncal)$ be an $\Ncal$-indexed set of parameters in $\bM$. We say that $\Gamma$ is \emph{finitely satisfiable} in $\mathcal{I}$ 
    if for every finite $\Gamma_0\subseteq \Gamma$ and for every finite $A\subseteq N$, there is a $B\subseteq N$, a bijection $f:A\to B$, and an enumeration $\bar{i}$ of $A$ such that:
    \[
    \qftp_{\Ncal}(\overline{i})=\qftp_{\Ncal}\left(f\left(\overline{i}\right)\right)\text{ and }(a_{f(i)} : i\in A)\models \Gamma_{0}\restr{\Set{x_i : i\in A}}.
    \]
\end{definition}

\begin{lemma}\label{MP iff ind fin satisfiable}
    Fix an $\L^\prime$-structure $\Ncal$, and a (not necessarily indiscernible) $\Ncal$-indexed sequence $\mathcal{I} = (a_i:i\in \Ncal)$. Then, the following are equivalent: 
    \begin{enumerate}[(1)]
        \item \label{MP iff ind fin satisfiable:ind fs} $\ind(\Ncal; \L)$ is finitely satisfiable in $\mathcal{I}$.
        \item \label{MP iff ind fin satisfiable: locally based}There is an $\Ncal$-indiscernible $\mathcal{J} = (b_i:i\in \Ncal)$ which is locally based on $\mathcal{I}$.
    \end{enumerate}
\end{lemma}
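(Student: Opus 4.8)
The plan is to prove the two implications separately. The implication $(\labelcref{MP iff ind fin satisfiable: locally based})\Rightarrow(\labelcref{MP iff ind fin satisfiable:ind fs})$ is a direct unwinding of the definitions, whereas $(\labelcref{MP iff ind fin satisfiable:ind fs})\Rightarrow(\labelcref{MP iff ind fin satisfiable: locally based})$ is a compactness argument and carries the real content.

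For $(\labelcref{MP iff ind fin satisfiable: locally based})\Rightarrow(\labelcref{MP iff ind fin satisfiable:ind fs})$, I would take an $\Ncal$-indiscernible $\mathcal{J}=(b_i:i\in\Ncal)$ locally based on $\mathcal{I}$, fix a finite $\Gamma_0\subseteq\ind(\Ncal;\L)$ and a finite $A\subseteq N$, and — after enlarging $A$ to contain every index occurring in $\Gamma_0$ and fixing an enumeration $\bar i$ of $A$ — let $\Delta$ be the finite set of $\L$-formulas appearing in $\Gamma_0$. Since $\mathcal{J}$ is $\Ncal$-indiscernible it satisfies all of $\ind(\Ncal;\L)$, so in particular $(b_i:i\in A)\models\Gamma_0$. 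Local basedness applied to $\Delta$, $n=|A|$ and $\bar i$ yields some $\bar j$ enumerating $B\subseteq N$ with $\qftp_\Ncal(\bar i)=\qftp_\Ncal(\bar j)$ and $\tp^\Delta_\Mbb(\bar b_{\bar i})=\tp^\Delta_\Mbb(\bar a_{\bar j})$; since inequalities are quantifier-free, $\bar j$ has distinct entries, so $f\colon i_k\mapsto j_k$ is a bijection $A\to B$ with $\qftp_\Ncal(\bar i)=\qftp_\Ncal(f(\bar i))$. Each member of $\Gamma_0$ is a Boolean combination of $\Delta$-formulas in variables from $\{x_i:i\in A\}$, so equality of $\Delta$-types transfers $(b_i:i\in A)\models\Gamma_0$ to $(a_{f(i)}:i\in A)\models\Gamma_0$, which is exactly what finite satisfiability in $\mathcal{I}$ requires.

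For $(\labelcref{MP iff ind fin satisfiable:ind fs})\Rightarrow(\labelcref{MP iff ind fin satisfiable: locally based})$, the idea is to realise, by compactness, the partial type $\Sigma(x_i:i\in\Ncal)$ consisting of $\ind(\Ncal;\L)$ together with all formulas $\lnot\vphi(x_{i_1},\dots,x_{i_n})$ where $\vphi\in\L$ and $\Mbb\models\lnot\vphi(\bar a_{\bar j})$ for \emph{every} $\bar j\in N^n$ with $\qftp_\Ncal(\bar j)=\qftp_\Ncal(i_1,\dots,i_n)$. Finite satisfiability of $\Sigma$ in $\Mbb$ follows from the hypothesis: given finite $\Sigma_0\subseteq\Sigma$, collect its variables into a finite $A\subseteq N$, apply finite satisfiability of $\ind(\Ncal;\L)$ in $\mathcal{I}$ to the $\ind$-part of $\Sigma_0$ and to $A$ to get a quantifier-free-type-preserving bijection $f\colon A\to B\subseteq N$ with $(a_{f(i)}:i\in A)$ modelling that part, and then note that each negative clause $\lnot\vphi(x_{\bar i_\ell})$ of $\Sigma_0$ is automatically satisfied by $(a_{f(i)}:i\in A)$ since $f(\bar i_\ell)$ has the same quantifier-free type as $\bar i_\ell$. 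A realisation $\mathcal{J}=(b_i:i\in\Ncal)$ of $\Sigma$ (found in $\Mbb$, taken sufficiently saturated) is $\Ncal$-indiscernible because $\ind(\Ncal;\L)\subseteq\Sigma$. To see it is locally based on $\mathcal{I}$, fix finite $\Delta\subseteq\L$, $n$, $\bar i\in N^n$, and let $\vphi$ be the single $\L$-formula expressing $\tp^\Delta_\Mbb(\bar b_{\bar i})$; since $\Mbb\models\vphi(\bar b_{\bar i})$, the clause $\lnot\vphi(x_{\bar i})$ is not in $\Sigma$, so some $\bar j\in N^n$ with $\qftp_\Ncal(\bar j)=\qftp_\Ncal(\bar i)$ satisfies $\Mbb\models\vphi(\bar a_{\bar j})$, and by completeness of $\Delta$-types this forces $\tp^\Delta_\Mbb(\bar a_{\bar j})=\tp^\Delta_\Mbb(\bar b_{\bar i})$ — the required witness.

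The step I expect to need the most care is the formulation of $\Sigma$, and in particular the observation that $\tp^\Delta_\Mbb$ is a complete type over the finite set $\Delta$, hence captured by a single formula; this is what lets one reduce the ``type-agreement'' clause of local basedness to the single-formula negative conditions placed into $\Sigma$. The rest — tracking quantifier-free types of subtuples, distinctness of entries, and the size of the index set $\Ncal$ (which, if it is not small, is handled by passing to an elementary extension) — is routine bookkeeping.
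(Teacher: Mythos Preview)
Your proposal is correct and follows essentially the same route as the paper. For $(\labelcref{MP iff ind fin satisfiable: locally based})\Rightarrow(\labelcref{MP iff ind fin satisfiable:ind fs})$ your direct argument matches the paper's; for $(\labelcref{MP iff ind fin satisfiable:ind fs})\Rightarrow(\labelcref{MP iff ind fin satisfiable: locally based})$ the paper simply cites \cite[Proposition~2(6)]{Scow_2015} and invokes compactness, whereas you spell out the standard construction explicitly (your $\Sigma$ is precisely $\ind(\Ncal;\L)$ together with the EM-type of $\mathcal{I}$, phrased contrapositively), which is exactly the argument underlying that citation.
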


\begin{proof}
    The implication (\labelcref{MP iff ind fin satisfiable:ind fs})$\implies$(\labelcref{MP iff ind fin satisfiable: locally based}) follows as in \cite[{Proposition 2(6)}]{Scow_2015}, replacing $\Gamma$ with $\Gamma_0$ and using compactness.

    For the implication (\labelcref{MP iff ind fin satisfiable: locally based})$\implies$(\labelcref{MP iff ind fin satisfiable:ind fs}) we argue as follows. Pick arbitrary finite subsets $\Gamma_0\subseteq \ind(\Ncal,\cL)$ and $A\subseteq N$. Since $\mathcal{J}$ is an $\Ncal$-indiscernible based on $\mathcal{I}$, there is some $B\subseteq \Ncal$, a bijection $f:A\to B$, and an enumeration $\bar{i}$ of $A$ such that $\qftp_{\Ncal}(\bar{i})=\qftp_\Ncal\left(f\left(\bar{i}\right)\right)$ and $\tp^{\Gamma_0}_\Mbb(d_{i}:i\in A)=\tp_\Mbb^{\Gamma_0}(a_{f(i)}:i\in A)$. Also, $(b_i:i\in B)\models \Gamma_0\restr{\Set{x_{i}:i\in B}}$, by the (generalised) indiscernibility of $\mathcal{J}$, and hence $(a_{f(i)}:i\in A)\models \Gamma_0\restr{\Set{x_{i}:i\in A}}$.
\end{proof}

Now we are ready to prove \Cref{MP for union of countable}.

\begin{proof}[Proof of \Cref{MP for union of countable}]
    Let $\mathcal{I} = (a_i:i\in\Ncal)$ be an $\Ncal$-indexed sequence of tuples from $\Mbb$. We need to show that there exists an $\Ncal$-indiscernible sequence $\mathcal{J}$ which is locally based on $\mathcal{I}$. By \Cref{MP iff ind fin satisfiable}, it suffices to show that $\ind(\Ncal, \L)$ is finitely satisfiable in $\mathcal{I}$.

    To this end, let $\Gamma_0\subseteq\ind(\Ncal,\L)$ and $A\subseteq N$ be finite subsets. We need to find some $B\subseteq N$, and a bijection $f:A\to B$ such that:
    \[
    \qftp_{\Ncal}(\bar{j})=\qftp_{\Ncal}\left(f\left(\bar{j}\right)\right)\text{ and }(a_{f(j)} : j\in A)\models \Gamma_{0}|_{\Set{x_j : j\in A}},
    \]
    where $\bar j$ is some enumeration of $A$.

    By assumption, there is some substructure $\Ncal_i\subseteq\Ncal$ such that $A\subseteq \Ncal_i$, and since $\Ncal_i$-indiscernibles have the modelling property, we know that $\ind(\Ncal_i,\L)$ is finitely satisfiable in the subsequence
    $\mathcal{I}\restr{\Ncal_i} := (a_{i_k} : i_k\in\Ncal_i)$, which by assumption on $\Ncal_i$ contains the part of $\mathcal{I}$ indexed by $A$. This by definition, means that there is some $B\subseteq N_i$ and a bijection $f:A\to B$ such that:
    \[
    \qftp_{{\Ncal_i}}(\bar{j})=\qftp_{{\Ncal_i}}\left(f\left(\bar{j}\right)\right)\text{ and }(a_{f(j)} : j\in A)\models \Gamma_{0}|_{\Set{x_j : j\in A}},
    \]
    for an enumeration $\bar j$ of $A$.

    However, observe that $\Ncal_i$ is a substructure of $\Ncal$, so it follows that $\qftp_{\Ncal_i}(\bar a) = \qftp_{\Ncal}(\bar a)$, for all tuples $\bar a$ from $\Ncal_i$. Hence, the \namecref{MP for union of countable} follows, since the witnesses of the finite satisfiability of $\ind(\Ncal_i,\L)$, witness the finite satisfiability of $\ind(\Ncal,\L)$, as required.
\end{proof}

Now, we have all the required tools to show our main result. To summarise, the countable case of our main theorem follows, simply by observing that in \Cref{countable: mp implies rc}, the order is $\L_{\infty,0}$-definable, and hence it may be dropped, without this affecting the modelling property. Below, we prove the theorem for arbitrary $\L'$-structures.

\begin{theorem}\label{rc mp}
    Let $\L^\prime$ be a first-order language and $\Ncal$ an infinite locally finite $\L^\prime$-structure. Then, the following are equivalent:
    \begin{enumerate}[(1)]
        \item \label{main-thm-rc} $\Ncal$ has the embedding Ramsey property.
        \item \label{main-thm-mp}$\Ncal$ has the modelling property.
    \end{enumerate}
\end{theorem}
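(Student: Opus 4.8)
The plan is as follows. The implication $(2)\Rightarrow(1)$ is already available: it is precisely \Cref{countable: mp implies rc}, whose statement and proof (routed through \Cref{mp-implies-ordered-expansion} and \Cref{cor:scow-no-qfi}) make no use whatsoever of countability. So the only thing left to prove is $(1)\Rightarrow(2)$, and here the idea is to bootstrap from the countable case \Cref{countable: rc implies mp} using \Cref{MP for union of countable}.

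Concretely, I would show that $\Ncal$ can be written as $\bigcup_{i\in I}\Ncal_i$, where each $\Ncal_i$ is a countable substructure of $\Ncal$ which still has the embedding Ramsey property, and such that every finite subset of $N$ is contained in some $\Ncal_i$. To build an $\Ncal_i$ containing a prescribed finite (or countable) set $A\subseteq N$, I would carry out a closure construction: let $S_0$ be a countably infinite subset of $N$ with $A\subseteq S_0$, and, given a countable $S_n\subseteq N$, note that $\langle S_n\rangle_\Ncal$ is countable by local finiteness, hence has only countably many pairs $X\subseteq Y$ of finite substructures; for each such pair, ERP of $\Age(\Ncal)$ supplies some $Z_{X,Y}\in\Age(\Ncal)$ with ${\arrows{Z_{X,Y}}{Y}}^X_2$, which I fix as a finite subset of $N$, and I let $S_{n+1}$ consist of $S_n$ together with all these (countably many) finite sets. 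Put $S=\bigcup_n S_n$ and $\Ncal_i:=\langle S\rangle_\Ncal$; this is again countable. To see that $\Age(\Ncal_i)$ has ERP, take a pair $X\subseteq Y$ of finite substructures of $\Ncal_i$: since the $S_n$ are increasing and $X,Y$ are finite, both are (finite substructures of) $\langle S_n\rangle_\Ncal$ for some $n$, so the witness $Z_{X,Y}$ fixed at stage $n$ was placed inside $S_{n+1}\subseteq S$ and therefore occurs as a substructure of $\Ncal_i$. As $\Ncal_i$ is moreover locally finite and $\Age(\Ncal_i)$ automatically has HP and JEP (the latter is again a consequence of local finiteness of $\Ncal_i$), $\Age(\Ncal_i)$ is a countable Ramsey class, so \Cref{countable: rc implies mp} gives that $\Ncal_i$ has the modelling property.

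Finally, letting $I$ range over all substructures produced in this way, one for each finite $A\subseteq N$, the family $\Set{\Ncal_i : i\in I}$ satisfies hypotheses (1) and (2) of \Cref{MP for union of countable}, and hence $\Ncal$ has the modelling property, as required.

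I expect the closure construction to be the only genuine obstacle. The point is that the embedding Ramsey property is not inherited by arbitrary subclasses of $\Age(\Ncal)$ (nor, correspondingly, by arbitrary countable substructures of $\Ncal$, so a naive Löwenheim--Skolem-type substructure need not work), so one really has to keep absorbing Ramsey witnesses for all finite configurations already present. Local finiteness is exactly what keeps this process countable at each stage, and it is also what forces the resulting age to have JEP.
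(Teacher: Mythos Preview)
Your proposal is correct and follows essentially the same approach as the paper: both directions are handled identically, with $(2)\Rightarrow(1)$ via \Cref{countable: mp implies rc} and $(1)\Rightarrow(2)$ by building, for each finite $A\subseteq N$, a countable substructure $\Ncal_i\supseteq A$ whose age is still a Ramsey class (via an iterated ``absorb the Ramsey witnesses'' closure), then invoking \Cref{countable: rc implies mp} and \Cref{MP for union of countable}. The only cosmetic difference is that the paper works directly with an increasing $\omega$-chain of finite structures $B_j^i$ rather than your countable sets $S_n$, but the construction and its verification are the same.
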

\begin{proof}
    The fact that (\labelcref{main-thm-mp})$\implies$(\labelcref{main-thm-rc}) was already proved in \Cref{countable: mp implies rc}. For the implication (\labelcref{main-thm-rc})$\implies$(\labelcref{main-thm-mp}), observe that since $\age(\Ncal)$ is a Ramsey class, we can find countable $\L^\prime$-structures $(\Ncal_i:i\in I)$, for some indexing set $I$, satisfying the conditions of \Cref{MP for union of countable}, i.e. such that:
    \begin{enumerate}
        \item $\Ncal = \bigcup_{i\in I}\Ncal_i$.
        \item For every finite $A\subseteq \Ncal$, there is some $i\in I$, such that $A\subseteq \Ncal_i$.
        \item For every $i\in I$ we have that $\Age(\Ncal_i)$ is a Ramsey class.
    \end{enumerate}
    To obtain the $\Ncal_i$, we proceed as follows. Let $(A_i:i\in I)$ be some enumeration (not necessarily countable) of all finite substructures of $\Ncal$. For each finite substructure $A_i$ we construct $\Ncal_i$ as follows. Let $B_0^i:= A_i$, and then take $B_1^i\in\Age(\Ncal)$ to be the $\L$-structure promised by \Cref{erp all colours}, with respect to all finite substructures of $B_0^i$. Repeat this process inductively, to obtain $(B_j^j:j\in\omega)$ and let $\Ncal_i = \bigcup_{j\in\omega} B_j^i$. By construction, $\Age(\Ncal_i)$ is still a Ramsey class, and it is easy to see that $\Ncal_i$ is countable, since it is the union of an increasing chain of finite structures. 
    Then, by \Cref{countable: rc implies mp}, for each $i\in I$, since $\Age(\Ncal_i)$ is a countable Ramsey class, $\cN_i$ has MP.  From \Cref{MP for union of countable} we can conclude that $\Ncal$ has MP. 
\end{proof}

As an immediate corollary of \Cref{rc mp} and \Cref{mp-implies-ordered-expansion} we obtain the following:

\begin{corollary}\label{rc qford}
    Let $\Ncal$ be an $\L$-structure with the embedding Ramsey property. Then $\Ncal$ is \qford.
\end{corollary}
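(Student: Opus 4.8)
The plan is to derive Corollary~\ref{rc qford} by combining the two results that it is stated to follow from, namely \Cref{rc mp} and \Cref{mp-implies-ordered-expansion}. So the whole argument is a two-step chain, and essentially all the real work has already been done in the preceding sections.

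First I would observe that the hypothesis is that $\Ncal$ has the embedding Ramsey property. To apply \Cref{rc mp} I need $\Ncal$ to be an \emph{infinite, locally finite} $\L$-structure. The infiniteness is implicit from the ambient conventions (all $\Ncal$ in this context are infinite structures), and here I should note that if $A\subseteq\Ncal$ is finitely generated but $\langle A\rangle_{\Ncal}$ were infinite, there would be a problem; however, for the corollary as stated one reads $\Ncal$ as an infinite locally finite structure exactly as in \Cref{rc mp}. Granting these hypotheses, \Cref{rc mp} gives that $\Ncal$ has the modelling property. Then \Cref{mp-implies-ordered-expansion} (which itself is the easy corollary of \Cref{existence of inds}) says precisely that any $\L$-structure with MP is $\L_{\infty,0}$-orderable. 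Stringing these together yields that $\Ncal$ is \qford, which is the claim.

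Concretely, the proof I would write is essentially one line: since $\Ncal$ has ERP, by \Cref{rc mp} it has the modelling property, and then by \Cref{mp-implies-ordered-expansion} $\Ncal$ is $\L_{\infty,0}$-orderable, i.e.\ \qford. If one wants to be more explicit about what this unpacks to, one can recall from \Cref{def:qford} that this means there is a (possibly infinite) Boolean combination $\Phi(x,y)$ of atomic and negated atomic $\L$-formulas which defines a linear order on every structure in $\Age(\Ncal)$, and in particular on $\Ncal$ itself; and one could add, via the reasoning in \Cref{countable rc order} and the fact that $\Age(\Ncal)$ is a Ramsey class (hence, by \Cref{RC implies AP}, closed under amalgamation), that when $\Age(\Ncal)$ happens to be countable this order can be taken $\Aut(\Flim(\Age(\Ncal)))$-invariant, recovering \Cref{rc qford intro}.

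There is really no main obstacle here: the corollary is a formal consequence of results already in hand, and the only thing to be careful about is confirming that the hypotheses of \Cref{rc mp} (infinite, locally finite) are exactly what is being assumed in the statement of Corollary~\ref{rc qford}, so that the invocation is legitimate; once that is checked the proof is immediate.

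\begin{proof}
    Since $\Ncal$ has the embedding Ramsey property, \Cref{rc mp} gives that $\Ncal$ has the modelling property. By \Cref{mp-implies-ordered-expansion}, it follows that $\Ncal$ is $\L_{\infty,0}$-orderable, that is, \qford.
\end{proof}
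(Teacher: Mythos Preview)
Your proposal is correct and matches the paper's own justification exactly: the paper states the corollary as an immediate consequence of \Cref{rc mp} and \Cref{mp-implies-ordered-expansion}, which is precisely the two-step chain you wrote out. Your remark about the implicit local finiteness hypothesis is also appropriate, since \Cref{rc mp} is stated for infinite locally finite $\Ncal$ and the corollary is meant to be read in that same setting.
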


\section{Around Local Finiteness}\label{sec:local-finiteness}

So far, we have only dealt with classes of finite $\L^\prime$-structures, or, to the same extent, classes of the form $\Age(\Ncal)$, where $\Ncal$ is a locally finite $\L^\prime$-structure. In this \namecref{sec:local-finiteness} we want to expand our discussion to deal with the embedding Ramsey property in a wider context of structures that are not necessarily locally finite. 

Let $\Ncal$ be an (arbitrary) $\L^\prime$-structure and let $\C = \Age(\Ncal)$. Recall that for a subset $A\subseteq \Ncal$ we write $\langle A \rangle_{\Ncal}$ for the substructure of $\Ncal$ \emph{generated by} $A$. Given an $\L^\prime$-structure $B$ and a finite subset $A \subseteq B$, we overload the notation $\binom{A}{B}$ to mean:
\[
    \binom{B}{A} := \{\tilde A\subseteq B : \qftp(A) = \qftp(\tilde A)\},
\]
where $B$ is an $\L^\prime$-structure, and $A\subseteq B$ a finite subset (but not necessarily a substructure) of $B$.
 
\begin{definition}[f-ERP]\label{def:f-ERP}
    Let $\C$ be a class of $\L^\prime$-structures. We say that $\C$ has the \emph{finitary Embedding Ramsey Property} (f-ERP) if for any $M\in\C$ and any finite subsets $A,B\subseteq M$ there are an $N\in\C$ and a finite subset $C\subseteq N$ such that $\arrows{C}{B}^A_2$. We say that an $\L^\prime$-structure $\M$ has the \emph{finitary Embedding Ramsey Property} (f-ERP) if $\Age(\M)$ the finitary Embedding Ramsey Property.
\end{definition}

The analogue of \Cref{def:rc}, for f-ERP is the following definition:

\begin{definition}\label{def:finitary-rc}
    Let $\C$ be a class of $\L^\prime$-structures. We say that $\C$ is a \emph{finitary Ramsey class} if $\C$ has HP, JEP, and f-ERP.
\end{definition}

\begin{remark}
If $\C$ is a class of finite structures, then $\C$ has f-ERP if, and only if $\C$ has ERP. In particular, for any locally finite $\L^\prime$-structure $\Ncal$ we have that $\Age(\Ncal)$ is a Ramsey class if, and only if it is a finitary Ramsey class.
\end{remark}

\begin{proposition}\label{f-ERP iff M arrows finite sets}
    Let $\cM$ be a structure. Then $\M$ has f-ERP if, and only if, for all finite subsets $A\subseteq B\subseteq \cM$ we have that $\arrows{\M}{B}^A_2$.
\end{proposition}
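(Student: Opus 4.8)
The plan is to prove the two implications separately, reducing everything to two soft facts: a finite structure lies in $\Age(\M)$ precisely when it embeds into $\M$, and the partition arrow is monotone under shrinking the ground structure, in the sense that if $\arrows{C}{B}^A_2$ and $C$ sits (with its induced structure) inside a larger structure, then restricting a colouring of the larger copies of $A$ to the copies inside $C$ transports a monochromatic copy of $B$ back up. Since embeddings and substructures preserve quantifier-free types, the overloaded symbols $\binom{\cdot}{\cdot}$ are unaffected by such moves, so one may freely pass witnesses between $\M$, its finitely generated substructures, and the abstract members of $\Age(\M)$. Modulo this, the real content of the \namecref{f-ERP iff M arrows finite sets} is that, for a fixed finite pair $A\subseteq B\subseteq\M$, the statement $\arrows{\M}{B}^A_2$ is equivalent to the existence of a \emph{finite} $C\subseteq\M$ with $\arrows{C}{B}^A_2$; f-ERP is then a uniform restatement of the latter over all such pairs, using that every instance of f-ERP's quantifier (a pair $A\subseteq B$ of finite subsets of some $M'\in\Age(\M)$) comes, via an embedding $M'\into\M$, from a pair of finite subsets of $\M$, and conversely every finite pair $A\subseteq B\subseteq\M$ lies in $\langle B\rangle_\M\in\Age(\M)$.

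For the forward direction, assume $\M$ has f-ERP and fix finite $A\subseteq B\subseteq\M$. Applying f-ERP to $\langle B\rangle_\M\in\Age(\M)$ with finite subsets $A,B$ yields $N\in\Age(\M)$ and a finite $C\subseteq N$ with $\arrows{C}{B}^A_2$; fixing an embedding $N\into\M$ and replacing $C$ by its image, we may take $C\subseteq\M$. Given any $\chi\colon\binom{\M}{A}\to 2$, restrict it to $\binom{C}{A}\subseteq\binom{\M}{A}$; by $\arrows{C}{B}^A_2$ there is $\tilde B\in\binom{C}{B}\subseteq\binom{\M}{B}$ with $\chi$ constant on $\binom{\tilde B}{A}\subseteq\binom{C}{A}$, which is exactly a witness for $\arrows{\M}{B}^A_2$.

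For the reverse direction, assume $\arrows{\M}{B}^A_2$ for all finite $A\subseteq B\subseteq\M$; given $M'\in\Age(\M)$ with finite $A\subseteq B\subseteq M'$, embed $M'$ into $\M$ so that $A\subseteq B\subseteq\M$ and $\arrows{\M}{B}^A_2$ holds by hypothesis. It remains to extract a finite $C\subseteq\M$ with $\arrows{C}{B}^A_2$, since then $N:=\langle C\rangle_\M\in\Age(\M)$ witnesses f-ERP for this instance. I would do this by the standard finitization argument: work in the compact space $2^{\binom{\M}{A}}$ and, for each $\tilde B\in\binom{\M}{B}$, let $K_{\tilde B}$ be the clopen set of colourings that are non-constant on the finite set $\binom{\tilde B}{A}$. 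If no finite $C$ worked, then for each finite $C\subseteq\M$ there is a colouring of $\binom{C}{A}$ with no $\tilde B\in\binom{C}{B}$ monochromatic; applying this to $C=\tilde B_1\cup\dots\cup\tilde B_k$ and extending arbitrarily shows every finite subfamily of $\{K_{\tilde B}\}$ has nonempty intersection, so by compactness $\bigcap_{\tilde B}K_{\tilde B}\neq\emptyset$, producing a colouring of $\binom{\M}{A}$ with no monochromatic copy of $B$ whatsoever — contradicting $\arrows{\M}{B}^A_2$.

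The forward direction is essentially a definition-chase, and the reverse direction is formal too except for the finitization step, which is where I expect the only genuine (though entirely routine) work: one must check that ``bad'' colourings on all finite pieces glue into a single ``bad'' colouring on $\M$, i.e.\ the usual de-infinitization of a finite-exponent partition statement via compactness of the product topology. Throughout I would also fix, once and for all, the convention that $A\subseteq B$ is built into the notation $\arrows{\cdot}{B}^A_2$ (replacing $A$ by an isomorphic copy inside $B$ when necessary), so that the overloaded $\binom{\cdot}{\cdot}$ always makes sense.
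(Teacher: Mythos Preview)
Your proposal is correct and matches the paper's intended approach: the paper's proof consists solely of the sentence ``The proof is a standard compactness argument,'' and what you have written is precisely a careful unpacking of that compactness argument (together with the routine definition-chase for the forward direction). There is nothing to add or correct.
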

\begin{proof}
    The proof is a standard compactness argument.
\end{proof}

\begin{definition}[Embedding Local Finiteness]
    An $\L$-structure $\M$ has the \emph{embedding local finiteness property} (elf) if for any finitely generated $B\subseteq \M$ and finite $A\subseteq B$ we have that $\binom{B} {A}$ is finite. More generally, an isomorphism-closed class $\C$ of $\L$-structures with HP has the \emph{embedding local finiteness property} if for all $B\in\C$ and all finite $A\subseteq B$ we have that $\binom{B}{A}$ is finite. 
\end{definition}

\begin{remark}\label{embedding local fin iff embedding fin subset}
    An $\L$-structure $\M$ has elf if, and only if, for every finitely generated substructure $B\subseteq \M$ and every tuple $\bar{a}\in \cM^n$, there is some finite $B'\subset B$ such that $\binom{B'}{\bar{a}}=\binom{B}{\bar{a}}$. More generally, a class $\C$ of finitely generated $\L$-structures has elf if, and only if, for every $B\in\C$ and finite $A\subseteq B$ there is some finite subset $B^\prime\subseteq B$ such that $\binom{B'}{A}=\binom{B}{A}$. 
\end{remark}

We argue that f-ERP is the correct generalisation of ERP, to classes of not necessarily finite structures. Our main point is that the main results about ERP for classes of finite structures generalise, almost immediately, to arbitrary structures, if one replaces ERP with f-ERP. In particular, in this context, we have analogues of both the Kechris-Pestov-Todorcevic correspondence and of \Cref{rc mp}.

\begin{theorem}\label{thm:kpt-fERP}
    Let $\M$ be an ultrahomogeneous $\L$-structure. Then, the following are equivalent:
    \begin{enumerate}[(1)]
        \item $\Aut(\M)$ is extremely amenable.
        \item $\M$ has the finitary embedding Ramsey property.
    \end{enumerate}
\end{theorem}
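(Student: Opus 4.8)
The plan is to re-run the classical proof of the Kechris--Pestov--Todorcevic correspondence (\Cref{thm:kpt}), but with ``finite substructure'' replaced everywhere by ``finite subset'' and the Ramsey property of $\Age(\M)$ replaced by \Cref{f-ERP iff M arrows finite sets}, which tells us that $\M$ has f-ERP if, and only if, $\M$ itself satisfies ${\arrows{\M}{B}}^A_2$ for all finite $A\subseteq B\subseteq M$. Two facts make this go through in the present generality. First, \Cref{f-ERP iff M arrows finite sets}; second, ultrahomogeneity of $\M$, which — after fixing an enumeration of every finite subset — lets us identify the ``copy set'' $\binom{\M}{A}$ from \Cref{sec:local-finiteness} with the $\Aut(\M)$-orbit of the corresponding tuple $\bar a$, where $G:=\Aut(\M)$, and which gives $g\cdot\binom{B}{A}=\binom{g(B)}{A}$ for all $g\in G$ and finite $A\subseteq B\subseteq M$ (since $g$ preserves quantifier-free types). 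Local finiteness plays no role here: the only use of finiteness of structures in the classical argument is to pack finitely many copies of $A$ into a single finite structure, and finitely many finite \emph{subsets} always have finite union. I would begin by recalling the combinatorial description of extreme amenability for the non-archimedean topological group $G$ (this is ``Property (R)''/finite oscillation stability specialised to the non-archimedean case; see \cite[\S4]{KechrisPestovTodorcevic_2005} and \cite[Chapter~11]{Bodirsky_2021}, valid irrespective of $|\M|$): $G$ is extremely amenable if, and only if, for every finite tuple $\bar a$ from $M$, every $r\in\N$, every colouring $\gamma\colon\bar a\cdot G\to r$ of the orbit $\bar a\cdot G=\{g(\bar a):g\in G\}$, and every finite $S\subseteq\bar a\cdot G$, there is $g\in G$ such that $\gamma\restriction(g\cdot S)$ is constant. (One uses that the open subgroups of $G$ are precisely those containing a pointwise stabiliser $G_{\bar a}$ of a finite tuple, so a left-uniformly continuous finite colouring of $G$ factors through some coset space $G/G_{\bar a}\cong\bar a\cdot G$.)

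For $(2)\Rightarrow(1)$, assuming $\M$ has f-ERP, \Cref{f-ERP iff M arrows finite sets} gives ${\arrows{\M}{B}}^A_2$ for all finite $A\subseteq B\subseteq M$, hence ${\arrows{\M}{B}}^A_r$ for every $r\in\N$ by the standard colour-reduction induction (the analogue of \Cref{erp all colours}, followed by a compactness argument as in \Cref{f-ERP iff M arrows finite sets}). To verify the criterion, given $\bar a$, a colouring $\gamma\colon\bar a\cdot G\to r$, and a finite $S\subseteq\bar a\cdot G$, let $A$ be the underlying set of $\bar a$ and $B:=A\cup\bigcup S$, so $A\subseteq B$ are finite subsets of $M$ with $S\subseteq\binom{B}{A}$; then ${\arrows{\M}{B}}^A_r$ produces $\tilde B\in\binom{\M}{B}$ with $\gamma\restriction\binom{\tilde B}{A}$ constant, and by ultrahomogeneity $\tilde B=g(B)$ for some $g\in G$, whence $g\cdot S\subseteq g\cdot\binom{B}{A}=\binom{\tilde B}{A}$ is $\gamma$-monochromatic. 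Thus $G$ is extremely amenable.

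For $(1)\Rightarrow(2)$, assuming $G$ is extremely amenable, by \Cref{f-ERP iff M arrows finite sets} it is enough to prove ${\arrows{\M}{B}}^A_2$ for arbitrary finite $A\subseteq B\subseteq M$. Fix such $A,B$ and a colouring $\chi\colon\binom{\M}{A}\to 2$; letting $\bar a$ enumerate $A$ we have $\binom{\M}{A}=\bar a\cdot G$, and $S:=\binom{B}{A}$ is a finite subset of this orbit. The criterion yields $g\in G$ with $\chi\restriction(g\cdot S)$ constant, and since $g\cdot S=g\cdot\binom{B}{A}=\binom{g(B)}{A}$ with $g(B)\in\binom{\M}{B}$, this is exactly ${\arrows{\M}{B}}^A_2$.

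The step I expect to be the main obstacle is stating and citing the combinatorial characterisation of extreme amenability in exactly the form needed — one valid for automorphism groups of structures of arbitrary cardinality — and matching it precisely to the ``finite-subset'' overloading of $\binom{\cdot}{\cdot}$ from \Cref{sec:local-finiteness}: in particular one must check that the enumeration convention makes ultrahomogeneity literally identify $\binom{\M}{A}$ with an $\Aut(\M)$-orbit and makes the equality $g\cdot\binom{B}{A}=\binom{g(B)}{A}$ hold on the nose. Everything else is routine bookkeeping.
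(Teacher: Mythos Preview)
Your proof is correct, but it takes a different route from the paper's. The paper's argument is a three-line reduction via the quantifier-free type isolator $\M_{\mathsf{iso}}$ of \Cref{def:isolator}: by \Cref{rmk:isolator}, $\Aut(\M)=\Aut(\M_{\mathsf{iso}})$ and $\M$ has f-ERP if and only if $\M_{\mathsf{iso}}$ has ERP; since $\M_{\mathsf{iso}}$ is relational and ultrahomogeneous exactly when $\M$ is, one simply invokes the classical correspondence (\Cref{thm:kpt}) for $\M_{\mathsf{iso}}$. In other words, the paper deliberately routes the argument through its new infinitary-expansion machinery, so that all the analytic content of KPT is inherited as a black box.

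Your approach, by contrast, unpacks that black box: you restate the combinatorial characterisation of extreme amenability for non-archimedean groups and verify it directly against \Cref{f-ERP iff M arrows finite sets}, never leaving the original structure $\M$. This is more self-contained and makes explicit exactly which form of the oscillation-stability criterion is needed in the uncountable case (the paper, for its part, defers this to \cite{Bartosova_2013,KrupinskiPillay_2022}). The trade-off is that your argument does not illustrate the isolator technique, which is one of the paper's declared aims in giving this proof; conversely, the paper's proof hides the fact that the only structural ingredient is the transitivity of $\Aut(\M)$ on realisations of each quantifier-free type, which your write-up makes transparent. Both approaches share the same residual obligation you correctly flag as the main obstacle: fixing enumerations so that $\binom{\M}{A}$ literally coincides with an $\Aut(\M)$-orbit, and citing a version of the extreme-amenability criterion valid beyond the Polish setting.
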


This \namecref{thm:kpt-fERP} essentially follows from \cite{KechrisPestovTodorcevic_2005}, in the case where $\M$ is countable, and in the general case, it follows both from the results of \cite{KrupinskiPillay_2022}, and from combining the results of \cite{Bartosova_2013} with the proof of the main theorem of \cite{KechrisPestovTodorcevic_2005}. Nonetheless, we give an alternative, completely rigorous proof, which showcases the power of the techniques introduced in \Cref{sec:infinitary-logic}. First, note the following:

\begin{remark}\label{rmk:isolator} 
    Let $\M$ be an $\L$-structure. Recall that by $\M_{\mathsf{iso}}$ we denote its $\L_{\infty,0}$-isolator, introduced in \Cref{def:isolator}.
    \begin{enumerate}[(1)]
        \item \label{same aut group} Then $\Aut(\M)=\Aut(\M_{\mathsf{iso}})$. In particular, $\Aut(\M)$ is extremely amenable if, and only if, $\Aut(\M_{\mathsf{iso}})$ is extremely amenable.
        \item \label{rp-iff-ferp} $\M$ has the finitary embedding Ramsey property if, and only if $\M_{\mathsf{iso}}$ has embedding Ramsey property. This follows easily from \Cref{qf-isolator-same-types}
    \end{enumerate}
\end{remark}

\begin{proof}[Proof of \Cref{thm:kpt-fERP}]
    Given the two remarks above, the theorem follows from \Cref{thm:kpt}, applied to $\M_{\mathsf{iso}}$, which is relational and ultrahomogeneous, if, and only if, $\M$ is ultrahomogeneous.
\end{proof}

We can also use the notion of f-ERP we just introduced to remove \emph{all} assumptions from Scow's theorem. This gives us the following result, which is the most general version of correspondence between generalised indiscernibles and Ramsey classes:

\begin{theorem}\label{frc mp}
    Let $\L^\prime$ be a first-order language and $\Ncal$ an $\L^\prime$-structure. Then, the following are equivalent:
    \begin{enumerate}[(1)]
        \item \label{extension-thm-frc} $\Ncal$ has the finitary embedding Ramsey property.
        \item \label{extension-thm-mp}$ \Ncal$ has the modelling property.
    \end{enumerate}
\end{theorem}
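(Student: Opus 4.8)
The plan is to derive \Cref{frc mp} from the already-established \Cref{rc mp} by passing to the quantifier-free type isolator $\Ncal_{\mathsf{iso}}$ of $\Ncal$ (\Cref{def:isolator}). The crucial---if simple---observation is that $\Ncal_{\mathsf{iso}}$ is \emph{relational}: its language $\L_{\mathsf{iso}}$ consists only of the predicates $R_p$. A relational structure is automatically locally finite, since for a finite $A\subseteq N$ the substructure generated by $A$ is just $A$ itself. As $\Ncal_{\mathsf{iso}}$ moreover has the same (infinite) domain as $\Ncal$, \Cref{rc mp} applies to $\Ncal_{\mathsf{iso}}$ without any further hypotheses. (As is the convention throughout the paper, we take $\Ncal$ to be infinite; the degenerate finite case can be checked directly.)

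Given this, the proof is a chain of three equivalences. First, by the second item of \Cref{rmk:isolator}, $\Ncal$ has f-ERP if, and only if, $\Ncal_{\mathsf{iso}}$ has ERP. Second, by \Cref{qf-isolator-same-types} the relation ``$\bar a$ and $\bar b$ have equal quantifier-free type'' is the same for $\Ncal$ and for $\Ncal_{\mathsf{iso}}$, so \Cref{same-qf-types-MP} yields that $\Ncal$ has MP if, and only if, $\Ncal_{\mathsf{iso}}$ has MP. Third, since $\Ncal_{\mathsf{iso}}$ is an infinite, locally finite $\L_{\mathsf{iso}}$-structure, \Cref{rc mp} gives that $\Ncal_{\mathsf{iso}}$ has ERP if, and only if, $\Ncal_{\mathsf{iso}}$ has MP. Composing these three biconditionals gives exactly the equivalence claimed in the statement.

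The only step carrying content beyond results already proved in the paper is the first reduction, i.e. \Cref{rmk:isolator}, so this is where I would be most careful, though I expect it to remain routine. Its content is that the overloaded notation $\binom{B}{A}=\{\tilde A\subseteq B : \qftp(A)=\qftp(\tilde A)\}$ appearing in \Cref{def:f-ERP} is engineered to match, for the relational structure $\Ncal_{\mathsf{iso}}$, the set of embeddings: a tuple and its image realise the same $\L_{\mathsf{iso}}$-quantifier-free type exactly when they realise the same $\L'$-quantifier-free type, by \Cref{qf-isolator-same-types}. Consequently the partition arrows $\arrows{C}{B}^A_2$ witnessing f-ERP for $\Ncal$ translate precisely into the partition arrows defining ERP for the (finitely generated, hence finite) substructures of $\Ncal_{\mathsf{iso}}$. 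Everything else---that the passage to $\Ncal_{\mathsf{iso}}$ preserves infinitude of the domain, that $\Age(\Ncal_{\mathsf{iso}})$ consists of finite structures, and the repeated appeals to \Cref{same-qf-types-MP}---is bookkeeping.
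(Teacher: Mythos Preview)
Your proposal is correct and follows exactly the paper's approach: reduce to $\Ncal_{\mathsf{iso}}$, which is relational and hence locally finite, invoke \Cref{rc mp} there, and transfer both ERP/f-ERP and MP back via \Cref{qf-isolator-same-types}, \Cref{same-qf-types-MP}, and \Cref{rmk:isolator}(\ref{rp-iff-ferp}). The paper's proof is the one-line version of what you have written out.
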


\begin{proof}
    This easily follows from \Cref{same-qf-types-MP}, \Cref{qf-isolator-same-types}, and \Cref{rc mp}, applied to $\Ncal_{\mathsf{iso}}$.
\end{proof}

Before we conclude this \namecref{sec:local-finiteness}, we introduce a stronger form of \Cref{def:f-ERP}, in the following definition, where we weaken the assumption that the subsets we want to colour are finite and only assume that they are finitely generated. More precisely:

\begin{definition}[inf-ERP]\label{def:inf-ERP}
    Let $\C$ be a class of $\L^\prime$-structures. We say that $\C$ has the \emph{infinitary Embedding Ramsey Property} (inf-ERP) if for any $M\in\C$ and any finitely generated substructures $A\subseteq B\subseteq M$ there is an $N\in\C$ and a finitely generated subset $C\subseteq N$ such that $\arrows{C}{B}^A_2$. We say that an $\L^\prime$-structure $\M$ has the \emph{infinitary Embedding Ramsey Property} (inf-ERP) if $\Age(\M)$ the infinitary Embedding Ramsey Property.
\end{definition}

\begin{proposition}\label{inf-ERP iff M arrows finitely generated}
    Let $\cM$ be a structure. Then $\cM$ has the infinitary embedding Ramsey property if, and only if, for all finitely generated substructures $A\subseteq B\subseteq \cM$ we have that $\arrows{\M}{B}^A_2$.
\end{proposition}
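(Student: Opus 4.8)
The plan is to mirror the proof of \Cref{f-ERP iff M arrows finite sets}, replacing ``finite subset'' by ``finitely generated substructure'' throughout. The easy direction is as follows: if $\M$ has inf-ERP and $A\subseteq B\subseteq\M$ are finitely generated substructures, then $B\in\Age(\M)$, so \Cref{def:inf-ERP} applied with $M:=B$ yields an $N\in\Age(\M)$ and a finitely generated $C\subseteq N$ with $\arrows{C}{B}^A_2$. Since $C$ is a finitely generated substructure of a member of $\Age(\M)$, it is isomorphic to a finitely generated substructure of $\M$, so it embeds into $\M$; and the partition arrow is monotone in its ambient argument — restrict a colouring $\chi\colon\binom{\M}{A}\to 2$ along a fixed embedding $C\hookrightarrow\M$, use $\arrows{C}{B}^A_2$ to produce a monochromatic copy of $B$, and push it forward — so $\arrows{\M}{B}^A_2$.

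For the converse, assume $\arrows{\M}{B}^A_2$ for all finitely generated substructures $A\subseteq B\subseteq\M$. Given $M\in\Age(\M)$ and finitely generated $A\subseteq B\subseteq M$, transport along an embedding $M\hookrightarrow\M$ to assume $A\subseteq B\subseteq\M$; it then suffices to produce a finitely generated substructure $C\subseteq\M$ with $\arrows{C}{B}^A_2$, since $N:=C$ witnesses \Cref{def:inf-ERP}. Suppose no such $C$ exists, so that every finitely generated $C$ with $B\subseteq C\subseteq\M$ carries a colouring $\chi_C\colon\binom{C}{A}\to 2$ with no monochromatic copy of $B$ inside $C$. The aim is to glue these into a colouring of $\binom{\M}{A}$ with no monochromatic copy of $B$, contradicting $\arrows{\M}{B}^A_2$. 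The clean attempt is propositional compactness: introduce a variable $p_e$ for each embedding $e\colon A\hookrightarrow\M$, and for each copy $\tilde B$ of $B$ in $\M$ a clause asserting that $\{p_e:e\in\binom{\tilde B}{A}\}$ is non-constant; any finite fragment mentions only finitely many embeddings, hence lies inside a single finitely generated $C$ and is satisfied by $\chi_C$, so compactness delivers the desired colouring.

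The only genuine difference from \Cref{f-ERP iff M arrows finite sets}, and the step I expect to be the main obstacle, is that a finitely generated structure need not be finite, so a copy $\tilde B$ of $B$ may contain infinitely many copies of $A$; then ``$\{p_e:e\in\binom{\tilde B}{A}\}$ is non-constant'' is an infinitary disjunction and propositional compactness does not apply verbatim. If $\M$ has embedding local finiteness each $\binom{\tilde B}{A}$ is finite and the argument goes through word for word — this already covers all relational structures, where it collapses to \Cref{f-ERP iff M arrows finite sets}. In general I would instead organise the argument around the finitely generated substructures of $\M$ directed under inclusion — using that a colouring of $\binom{\M}{A}$ avoids monochromatic copies of $B$ precisely when each of its restrictions to a finitely generated $C$ does — and pass to a suitable limit of the $\chi_C$ along this directed system, in the spirit of the piece-by-piece bookkeeping in the proof of \Cref{MP for union of countable}. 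The crux is to check that the limiting colouring still avoids monochromatic copies of $B$ on the possibly infinite sets $\binom{\tilde B}{A}$; this is where the real work lies, and it is essential here that each $\chi_C$ avoids monochromatic copies of $B$ not merely on $\binom{C}{A}$ but on every copy of $B$ sitting inside $C$.
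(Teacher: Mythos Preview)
Your forward direction (inf-ERP $\Rightarrow$ $\arrows{\M}{B}^A_2$) is correct and matches the paper's one-line ``Clearly''. The gap you flag in the converse is genuine, and your proposed directed-limit repair does not close it. The obstruction is topological: a colouring $\chi\in 2^{\binom{\M}{A}}$ avoids monochromatic copies of $B$ precisely when for every $\tilde B\in\binom{\M}{B}$ there exist $e,e'\in\binom{\tilde B}{A}$ with $\chi(e)\neq\chi(e')$. When $\binom{\tilde B}{A}$ is infinite this is an \emph{open} condition, not a closed one, so the set of ``bad'' colourings is only $G_\delta$, and a limit of bad colourings along your directed system of finitely generated $C$'s can perfectly well be good --- nothing forces the witnessing pair $(e,e')$ to stabilise. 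Concretely, if $\binom{B}{A}=\{e_1,e_2,\dots\}$ then the colourings $\chi_n$ sending $e_1,\dots,e_n\mapsto 0$ and $e_i\mapsto 1$ for $i>n$ are each non-constant on $\binom{B}{A}$, yet converge pointwise to the constant-$0$ colouring. This is exactly why your compactness argument goes through under elf (each $\binom{\tilde B}{A}$ finite, hence the condition is clopen and the bad set closed) but not in general; your final paragraph rediscovers the obstruction without resolving it.

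The paper's argument for the converse is organised differently: it invokes f-ERP together with \Cref{f-ERP iff M arrows finite sets}, and then elf together with \Cref{embedding local fin iff embedding fin subset}. You should note, however, that as printed the paper's proof of that direction is itself problematic: the paragraph labelled ``the other direction'' ends by deriving $\arrows{\M}{B}^A_2$ for finitely generated $A\subseteq B$ --- which is the \emph{hypothesis} of that direction, not its conclusion --- and it appeals to f-ERP and elf without deriving either from the assumption at hand (indeed, inf-ERP $\Rightarrow$ elf is only established later, in \Cref{inf-ERP implies  uncountable or embedding local fin}). So neither your proposal nor the paper's printed proof gives a complete argument for the converse implication as stated; in both cases the missing ingredient is precisely some control over $\binom{B}{A}$, of the kind that elf supplies.
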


\begin{proof}
    Clearly, inf-ERP implies $\arrows{\cM}{B}^A_2$ for all finitely generated substructures $A\subseteq B\subseteq \cM$.

    For the other direction, by f-ERP and \Cref{f-ERP iff M arrows finite sets}, $\arrows{\cM}{B}^A_2$ for all finite subsets $A\subseteq B\subseteq \cM$. By elf and \Cref{embedding local fin iff embedding fin subset}, $\arrows{\cM}{B}^A_2$ for all finitely generated substructures $A\subseteq B\subseteq \cM$.
\end{proof}

\begin{proposition}\label{prop:inf-ERP-iff-finite-in-fg}
    Let $\C$ be a class of $\L$-structures. Then, the following are equivalent:
    \begin{enumerate}
        \item \label{prop:inf-ERP}$\C$ has the infinitary embedding Ramsey property.
        \item \label{prop:weaker-inf-ERP} For any $M\in\C$, any finitely generated substructure $B\subseteq M$, and any finite $A\subseteq B$ there is an $N\in\C$ and a finitely generated subset $C\subseteq N$ such that $\arrows{C}{B}^A_2$.
    \end{enumerate}
\end{proposition}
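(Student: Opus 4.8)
The plan is to deduce both implications from a single bookkeeping observation: for the purpose of Ramsey colourings, a finite subset $A$ of a structure and the finitely generated substructure $\langle A\rangle$ it generates carry ``the same'' copies, and every finitely generated substructure is of the form $\langle A\rangle$ for some finite subset $A$. Spelled out: if $A$ is a finite subset of an $\L$-structure $D$, put $A':=\langle A\rangle_D$; then for any $\L$-structure $E$ and any copy $\tilde A\in\binom{E}{A}$ the generated substructure $\langle\tilde A\rangle_E$ is a copy of $A'$ (since $\qftp(\tilde A)=\qftp(A)$ pins down the isomorphism type of the substructure it generates), and conversely every copy of $A'$ in $E$ canonically contains a copy of $A$; moreover this translation is compatible with sitting inside a fixed substructure $\tilde B\subseteq E$, in the sense that $\tilde A\in\binom{\tilde B}{A}$ precisely when the associated copy of $A'$ lies in $\binom{\tilde B}{A'}$. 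The only delicate point is making the word ``canonically'' precise, for instance by fixing enumerations of generating tuples so that $\binom{\cdot}{A}$ is read as the set of copies of a fixed enumerating tuple of $A$; there is no combinatorial content.

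For the implication (\labelcref{prop:inf-ERP})$\implies$(\labelcref{prop:weaker-inf-ERP}): given $M\in\C$, a finitely generated substructure $B\subseteq M$ and a finite $A\subseteq B$, set $A':=\langle A\rangle_M\subseteq B$ and apply \Cref{def:inf-ERP} to the finitely generated substructures $A'\subseteq B\subseteq M$, obtaining $N\in\C$ and a finitely generated $C\subseteq N$ with $\arrows{C}{B}^{A'}_2$. A colouring $\chi:\binom{C}{A}\to 2$ pulls back along the translation above to a colouring $\hat\chi:\binom{C}{A'}\to 2$, and a copy $\tilde B\in\binom{C}{B}$ on which $\hat\chi$ is constant is then also $\chi$-monochromatic, because each $\tilde A\in\binom{\tilde B}{A}$ generates a copy of $A'$ inside $\tilde B$ carrying the same colour. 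Hence $\arrows{C}{B}^A_2$, as required.

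For the implication (\labelcref{prop:weaker-inf-ERP})$\implies$(\labelcref{prop:inf-ERP}): given $M\in\C$ and finitely generated substructures $A\subseteq B\subseteq M$, pick a finite $A_0\subseteq A$ with $\langle A_0\rangle_M=A$ (possible since $A$ is finitely generated); as $A_0$ is a finite subset of $B$, (\labelcref{prop:weaker-inf-ERP}) supplies $N\in\C$ and a finitely generated $C\subseteq N$ with $\arrows{C}{B}^{A_0}_2$, and the same colour-transfer argument, run in reverse, upgrades this to $\arrows{C}{B}^A_2$. In both directions the only genuine work is making the dictionary between copies of a finite subset and copies of the substructure it generates precise enough that constancy of a $2$-colouring visibly passes back and forth along it; beyond that, the statement is immediate from \Cref{def:inf-ERP}.
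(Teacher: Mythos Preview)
Your proof is correct and takes a genuinely more elementary route than the paper's. The key observation you isolate---that for a finite generating tuple $A_0$ of a finitely generated structure $A'$, the map $e\mapsto e(A_0)$ is a bijection $\binom{E}{A'}\to\binom{E}{A_0}$ which restricts to a bijection $\binom{\tilde B}{A'}\to\binom{\tilde B}{A_0}$ for any substructure $\tilde B\subseteq E$---is exactly right: injectivity holds because $A_0$ generates $A'$, surjectivity because equal quantifier-free types yield isomorphic generated substructures. Since the paper reads $\binom{\cdot}{A'}$ as \emph{embeddings}, no rigidity hypothesis is needed, and colourings transfer in both directions along this bijection just as you describe.

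The paper treats (\ref{prop:inf-ERP})$\Rightarrow$(\ref{prop:weaker-inf-ERP}) as ``immediate'' (implicitly via the same dictionary), but for the converse it takes a substantial detour: it first argues that (\ref{prop:weaker-inf-ERP}) implies f-ERP, and then invokes \Cref{lem:f-ERP-implies-same-qftp-different-gens}---whose proof relies on \Cref{frc mp} and \Cref{mp-implies-ordered-expansion}, i.e.\ the full modelling-property/$\L_{\infty,0}$-orderability machinery---to deduce that distinct tuples of the same quantifier-free type generate distinct substructures, and only then concludes (\ref{prop:inf-ERP}). Your argument bypasses all of this: running the bijection in the reverse direction suffices, with no appeal to orderability or rigidity. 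What you gain is a self-contained two-line proof; what the paper's approach records along the way is the independently interesting rigidity statement of \Cref{lem:f-ERP-implies-same-qftp-different-gens}, at the cost of a far heavier argument for the proposition itself.
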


First, we state the following lemma:

\begin{lemma}\label{lem:f-ERP-implies-same-qftp-different-gens}
    Let $\C$ be a finitary Ramsey class of $\L$-structures. Then, for all $M\in\C$ and all distinct tuples $\bar b_1,\bar b_2$ from $M$ we have that if $\qftp_M(\bar b_1) = \qftp_M(\bar b_2)$, then $\langle\bar b_1\rangle_M \neq \langle \bar b_2\rangle_M$.
\end{lemma}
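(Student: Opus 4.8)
I would prove the \emph{contrapositive}, and first recast it structurally. Suppose $M\in\C$ and $\bar b_1\neq\bar b_2$ are tuples from $M$ with $\qftp_M(\bar b_1)=\qftp_M(\bar b_2)$ but $\langle\bar b_1\rangle_M=\langle\bar b_2\rangle_M=:B$. Deleting parallel repeated coordinates (which changes neither the quantifier-free types nor the generated substructures, and which one can undo identically on both sides since the equality pattern is determined by the common quantifier-free type), I may assume $\bar b_1,\bar b_2$ enumerate their entry sets $A_1,A_2$ bijectively. Since $\qftp_M(\bar b_1)=\qftp_M(\bar b_2)$, the entrywise assignment $b_{1,i}\mapsto b_{2,i}$ is a well-defined partial $\L$-isomorphism $A_1\to A_2$; as both $\bar b_1$ and $\bar b_2$ generate $B$, it extends uniquely to $\sigma\in\Aut(B)$, with $\sigma\neq\mathrm{id}$ because $\bar b_1\neq\bar b_2$. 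By HP (\Cref{def:hp-jep-ap-erp}), $B\in\C$. Conversely, a nontrivial $\sigma\in\Aut(B)$ for a finitely generated $B\in\C$ yields distinct tuples $\bar b,\sigma(\bar b)$ (for any finite generating tuple $\bar b$) with equal quantifier-free type and the same generated substructure. Thus the lemma is equivalent to: \emph{a finitary Ramsey class contains no non-rigid finitely generated structure}, and this is what I would prove.

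The plan is to mirror the proof that Ramsey classes consist of rigid structures (\Cref{ramsey classes have rigid elts}), adapted to f-ERP. In the \emph{relational} case this is immediate: finitely generated substructures are finite, so $A_1=A_2=B$ and $\sigma$ is a nontrivial permutation of the finite structure $B$; moreover the subclass $\C_{\mathrm{fin}}$ of finite members of $\C$ still has ERP (given a witnessing finite $C\subseteq N$ from f-ERP for a pair $A\subseteq B'$, its finite induced substructure lies in $\C$ by HP and still witnesses the arrow), so \Cref{ramsey classes have rigid elts} applies to $\C_{\mathrm{fin}}$ and $B$, a contradiction.

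For the general case I would produce a colouring that no copy can avoid. Fix the relevant finite data --- $A_1\cup A_2$ together with a finite witness of $\sigma\neq\mathrm{id}$ --- apply f-ERP (\Cref{def:f-ERP}) and boost the number of colours (as in \Cref{erp all colours}) to obtain $N\in\C$ and a finite $C\subseteq N$ with an appropriate partition arrow; fix an auxiliary linear order $\preceq$ on $C$, and colour each copy (of the relevant finite pattern) in $C$ by the $\preceq$-order type it induces via a quantifier-free-type-preserving enumeration, of which there are only boundedly many. The point is that, inside any copy, the two distinguished sub-copies coming from $A_1$ and $A_2=\sigma(A_1)$ are related by a permutation induced by $\sigma$ on an enumeration, so they receive \emph{different} colours (a permutation preserving a linear order is the identity). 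Hence no copy is monochromatic, contradicting the chosen partition arrow.

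The main obstacle is precisely the non-relational case, and within it the two degenerate sub-cases: (i) when $B$ is finitely generated but infinite, and (ii) when the entry sets $A_1$ and $A_2$ coincide, so that the ``two distinct sub-copies inside a fixed finite pattern'' picture collapses. In those sub-cases one must first massage the generating tuple --- extending it to include a point moved by $\sigma$, passing to a $\sigma$-invariant finite enumeration when $\sigma$ has finite order, or running a compactness argument over arbitrarily large finite pieces of $B$ when it does not --- so that the colouring of the previous paragraph applies. Carrying out this reduction cleanly, while keeping the number of colours bounded yet still forcing at least two colours on each copy, is where the real work lies; everything else is a routine translation of the classical rigidity argument into the finitary framework of \Cref{def:f-ERP,def:finitary-rc}.
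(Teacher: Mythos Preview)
Your recasting of the lemma as ``a finitary Ramsey class contains no non-rigid finitely generated structure'' is correct and useful, and the relational case is indeed handled by \Cref{ramsey classes have rigid elts}. However, your main argument for the non-relational case has a genuine gap, and your identification of the difficult sub-cases is inverted.

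The order-type colouring you describe works precisely when the two sub-copies $\bar b_1$ and $\bar b_2$ inside a fixed copy of the larger pattern are \emph{permutations of one another}, i.e.\ when the underlying sets $A_1$ and $A_2$ coincide --- the very case you flag as degenerate. When $A_1\neq A_2$, the images $e(\bar b_1)$ and $e(\bar b_2)$ inside an embedded copy are \emph{not} related by a permutation of coordinates, and there is no reason they receive different $\preceq$-order types (e.g.\ both may be $\preceq$-increasing). So your ``main'' paragraph does not cover the generic situation; the case $A_1=A_2$ is the easy one, and the hard case is when every $\sigma$-orbit meeting $A_1$ is infinite, so that no finite $\sigma$-invariant enumeration exists. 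Your proposed ``compactness over large finite pieces of $B$'' does not address this: for each finite piece the two sub-copies still need not be permutations of each other, and no uniform bad $2$-colouring emerges.

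The paper takes a different route that sidesteps this entirely. Since $\bar b_2\in\langle\bar b_1\rangle_M$, there is a tuple of $\L$-terms $\mathsf{t}$ with $\mathsf{t}(\bar b_1)=\bar b_2$. By \Cref{frc mp} and \Cref{mp-implies-ordered-expansion}, $\C$ is $\L_{\infty,0}$-orderable; this yields a \emph{canonical} (qftp-respecting) order, so $\bar b_1<_{\mathrm{lex}}\mathsf{t}(\bar b_1)$ forces $\mathsf{t}^n(\bar a)<_{\mathrm{lex}}\mathsf{t}^{n+1}(\bar a)$ for every $\bar a$ of the same qftp. One then colours $\binom{C}{\bar b_1}$ by the parity of the exponent along $\mathsf{t}$-orbits; in any copy of $\bar b_1{}^\frown\bar b_2$ the two sub-copies are $\bar a$ and $\mathsf{t}(\bar a)$, hence differently coloured. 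The key point is that the order is $\L_{\infty,0}$-definable (not an auxiliary choice on $C$), so strict monotonicity of $\mathsf{t}$ is part of the quantifier-free type and holds uniformly --- this is exactly what your arbitrary $\preceq$ cannot guarantee. Your strategy can in fact be repaired without invoking orderability (split on whether the order of $\sigma$ is odd or not, using your order-type colouring in the former case and a bipartite/parity colouring along $\mathsf{t}$ in the latter), but that is not what you wrote, and the compactness hint does not lead there.
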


\begin{proof}
Indeed, suppose that this is not the case. Since $\langle \bar b_1\rangle_M = \langle \bar b_2\rangle_M$, we have that $\bar b_2\in\langle\bar b_1\rangle_M$, so there is an $\L$-term $\mathsf{t}(\bar x)$ (where $|\bar x| = |\bar b_i|$) such that $\mathsf{t}(\bar b_1) = \bar b_2$. Now, note that by \Cref{frc mp} and \Cref{mp-implies-ordered-expansion} it follows that $\C$ is $\L_{\infty,0}$-orderable. Since the tuples we started with were distinct, without loss of generality, we may assume that $\bar b_1 < \mathsf{t}(\bar b_1)$, in the lexicographic order induced by the order given by $\L_{\infty,0}$-orderability. But since $\qftp(\bar b_1) = \qftp(\mathsf{t}(\bar b_1))$, it follows that for all $n\in\N$ we have that $\mathsf{t}^{n}(\bar b_1) <\mathsf{t}^{n+1}(\bar b_1)$, since for all $n\in\N$, we have that $\qftp(\mathsf{t}^n(\bar b_1)) = \qftp(\bar b_1)$.

We claim now, that for any $N\in\C$ and any finite $C\subseteq M$, we have that ${C}\not\rightarrow({\bar b_1{}^\frown\bar b_2})^{\bar b_1}_2$, that is, ${C}\not\rightarrow({\bar b_1{}^\frown\mathsf{t}(\bar b_1)})^{\bar b_1}_2$. Fix a set $\Set{\bar b^{1},\dots,\bar b^{m}}\subseteq\binom{C}{\bar b_1}$ such that for all $\bar b\in \binom{C}{\bar b_1}$ there exist unique $k\leq m$ and $n\in \Z$ such that $\bar b = \mathsf{t}^n(\bar b^{k})$.
Then, the colouring:
 \[
 \begin{aligned}
 	\chi:\binom{C}{\bar b_1} &\to 2 \\
 	 \mathsf{t}^n(\bar b^{k}) &\mapsto 
 			\begin{cases}
 				0 &\text{ if } n\equiv 0\pmod{2}, \\
 				1 &\text{ otherwise}.
 			\end{cases}
 \end{aligned}
 \]
 is well-defined and cannot have any $\bar b_1$-monochromatic $\bar b_1{}^\frown\bar b_2$, contradicting the fact that $\C$ has f-ERP.
\end{proof}

\begin{proof}[Proof of \Cref{prop:inf-ERP-iff-finite-in-fg}]
    The implication (\labelcref{prop:inf-ERP})$\implies$(\labelcref{prop:weaker-inf-ERP}) is immediate. In the other direction, observe that (\labelcref{prop:weaker-inf-ERP}) implies that $\C$ has f-ERP, but then, by \Cref{lem:f-ERP-implies-same-qftp-different-gens}, we must have that no two distinct tuples with the same quantifier-free types generate the same structure, and (\labelcref{prop:inf-ERP}) follows.
\end{proof}

In the remainder of this section, we discuss some connections between f-ERP and inf-ERP. We will assume that we are working with structures in a language $\L$ where $|\L|<\aleph_\omega$.

\begin{lemma}\label{lem:bounds-on-fg}
    Let $\cC$ be a class of finitely generated $\L$-structures which are not all finite. Then there is some $B\in \cC$ such that the cardinality of $B$ is maximal with respect to $\cC$.
\end{lemma}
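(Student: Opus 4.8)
The plan is to first bound the size of an individual finitely generated structure, and then exploit the hypothesis $|\L| < \aleph_\omega$ to see that the collection of cardinalities realised in $\cC$ has only finitely many infinite members, hence a largest one.

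First I would recall the standard term-counting bound. If $B = \langle a_1,\dots,a_n\rangle_B \in \cC$, then every element of $B$ is of the form $\mathsf{t}^B(a_1,\dots,a_n)$ for some $\L$-term $\mathsf{t}(x_1,\dots,x_n)$; since the set of $\L$-terms in $n$ fixed variables is a set of finite strings over an alphabet of size at most $|\L|$, it has cardinality at most $\lambda := \max(\aleph_0,|\L|)$, and therefore $|B| \le \lambda$. By hypothesis $|\L| < \aleph_\omega$, so $\lambda < \aleph_\omega$, i.e.\ $\lambda = \aleph_k$ for some $k\in\omega$.

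Next I would consider the set $S := \{\, |B| : B\in\cC \,\}$, which is a genuine set (all its members are cardinals $\le\lambda$, so $S\subseteq\lambda^{+}$), even if $\cC$ is a proper class. Split $S$ into its finite part and its infinite part $S_{\mathrm{inf}}$. Every cardinal in $S_{\mathrm{inf}}$ lies in $\{\aleph_0,\aleph_1,\dots,\aleph_k\}$, which is a \emph{finite} set of cardinals; hence $S_{\mathrm{inf}}$ is finite. Since $\cC$ contains at least one infinite structure, $S_{\mathrm{inf}}\neq\emptyset$, and being a nonempty finite set of cardinals it has a greatest element $\kappa$.

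Finally I would check that $\kappa=\max S$: any finite $\mu\in S$ satisfies $\mu<\aleph_0\le\kappa$, and any infinite $\mu\in S$ satisfies $\mu\le\kappa$ by the choice of $\kappa$. Picking $B\in\cC$ with $|B|=\kappa$ gives the claim. I do not expect a real obstacle here; the only points requiring care are the term-counting bound and the (essential, if trivial) use of $|\L|<\aleph_\omega$ to guarantee that there are only finitely many infinite cardinals below the bound — without that assumption a class of finitely generated structures could realise cofinally many infinite cardinalities, so the statement would genuinely fail.
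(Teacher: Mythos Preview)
Your proof is correct and rests on the same core observation as the paper's: the term-counting bound $|B|\le \max(\aleph_0,|\L|)<\aleph_\omega$ leaves only finitely many infinite cardinals available, so a maximum exists. The paper packages this as a contradiction---an $\omega$-sequence of structures of strictly increasing infinite cardinality, whose ``union'' would be countably generated yet of size $\ge\aleph_\omega$---whereas you argue directly that $S_{\mathrm{inf}}\subseteq\{\aleph_0,\dots,\aleph_k\}$; your route is slightly cleaner since it avoids having to make sense of the union of a sequence of (not obviously nested) structures.
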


\begin{proof}
    If not, then there is a chain of length $\omega$ of infinite finitely generated structures in $\cC$ that increases strictly in cardinality. The union of this chain will be a countably generated structure of cardinality at least $\aleph_\omega$, contradicting the assumption on $|\L|$.
\end{proof}

A similar argument as in \Cref{lem:bounds-on-fg} yields the following: 
\begin{lemma}\label{lem:bounds-on-fg for embeddings}
    Let $\cC$ be a class of finitely generated $\L$-structures without elf. Then there is some $B\in \cC$ and finite $A\subseteq B$, such that 
    \[\left|\binom{B}{A}\right|=\max\Set{\left|\binom{X}{Y}\right| : X\in \cC,\  Y\subseteq X \text{ finite}}.\]
\end{lemma}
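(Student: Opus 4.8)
The plan is to re-run the cardinal-counting argument from the proof of \Cref{lem:bounds-on-fg}, but applied to the function $(X,Y)\mapsto\left|\binom{X}{Y}\right|$ rather than to $X\mapsto|X|$. Set $\lambda:=\max(\aleph_0,|\L|)$, so that $\lambda<\aleph_\omega$ by the standing hypothesis on $|\L|$, and write $\lambda=\aleph_k$ for the appropriate $k<\omega$.

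First I would record the relevant cardinality bounds. Every $X\in\cC$ is finitely generated, so its domain is the closure of some finite subset under the operations of $\L$; as there are at most $\lambda$ many $\L$-terms, this forces $|X|\le\lambda$. In particular $X$ has at most $\max(\aleph_0,|X|)\le\lambda$ finite subsets, and since every member of $\binom{X}{Y}$ is, by the definition of this (overloaded) notation, a finite subset of $X$ (indeed one of the same size as $Y$), we obtain $\left|\binom{X}{Y}\right|\le\lambda$ for all $X\in\cC$ and all finite $Y\subseteq X$. Therefore $S:=\Set{\left|\binom{X}{Y}\right| : X\in\cC,\ Y\subseteq X\text{ finite}}$ is a set of cardinals, each of which is $\le\lambda=\aleph_k$.

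Now I would conclude as follows. The infinite cardinals that are $\le\aleph_k$ are exactly $\aleph_0,\dots,\aleph_k$, a finite set; so only finitely many infinite cardinals can occur in $S$. On the other hand, since $\cC$ does not have elf, there are some $B\in\cC$ and finite $A\subseteq B$ with $\binom{B}{A}$ infinite, so $S$ contains at least one infinite cardinal. Thus the set of infinite cardinals appearing in $S$ is finite and non-empty; let $\mu$ be its greatest element. Since $\mu\ge\aleph_0$ exceeds every finite cardinal, $\mu$ is the greatest element of all of $S$. Choosing $B\in\cC$ and finite $A\subseteq B$ witnessing $\left|\binom{B}{A}\right|=\mu$ then gives precisely the statement of the lemma.

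I do not expect any serious obstacle here. The only place where anything substantive is used is the uniform bound $\left|\binom{X}{Y}\right|\le\lambda<\aleph_\omega$, and that is exactly where the hypothesis $|\L|<\aleph_\omega$ enters, just as in \Cref{lem:bounds-on-fg}. If anything this case is softer than \Cref{lem:bounds-on-fg}: because each $\binom{X}{Y}$ is individually bounded by the fixed cardinal $\lambda$, one need not pass to a union of an increasing chain, and one only invokes the elementary fact that a set of cardinals which is bounded strictly below $\aleph_\omega$ and contains an infinite cardinal possesses a greatest element.
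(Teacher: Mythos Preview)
Your proof is correct, and it takes a somewhat more direct route than the paper's. The paper argues by contradiction: assuming no maximum exists, it chooses a strictly increasing $\omega$-sequence $\aleph_0\le\left|\binom{B_i}{A_i}\right|<\left|\binom{B_{i+1}}{A_{i+1}}\right|$, forms the union $B=\bigcup_{i<\omega}B_i$, notes that $B$ is countably generated, and derives $|B|\ge\aleph_\omega$, contradicting $|\L|<\aleph_\omega$. You instead observe up front that each $\left|\binom{X}{Y}\right|$ is already bounded by the fixed cardinal $\lambda=\max(\aleph_0,|\L|)=\aleph_k<\aleph_\omega$, so the set $S$ of these cardinalities contains at most the finitely many infinite cardinals $\aleph_0,\dots,\aleph_k$ and hence has a maximum once non-elf guarantees an infinite value occurs. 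Both arguments rest on the same underlying fact---finitely (resp.\ countably) generated $\L$-structures have size below $\aleph_\omega$---but your version sidesteps the union construction entirely, which is a small gain: as written, the paper's union $\bigcup_i B_i$ tacitly presumes the $B_i$ live inside a common ambient structure (it calls them a ``chain''), an assumption not provided by the hypotheses of the lemma, whereas your argument needs no such compatibility.
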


\begin{proof}
    If not, then there is a chain of $\Set{B_i}_{i<\omega}$ of finitely generated structures in $\cC$ and a family of finite sets $\Set{A_i}_{i<\omega}$ such that $A_i\subseteq B_i$ and 
     $\aleph_0\leq \left|\binom{B_i}{A_i}\right|<\left|\binom{B_{i+1}}{A_{i+1}}\right|$ for all $i<\omega$.
    Let $B:=\bigcup_{i<\omega}B_i$ and let $|A_i|=n_i$. Then $B$ is countably generated and 
    $\left|\bigcup_{i<\omega}B^{n_i}\right|\geq \left|\bigcup_{i<\omega}\binom{B_i}{A_i}\right|\geq \aleph_{\omega}$. But if $B$ is infinite, then $|B^{n}|\leq|B|$ for every $n\in \bN$. So
    $\left|\bigcup_{i<\omega}B^{n_i}\right|\leq \sum_{i<\omega}|B^{n_i}|\leq \sum_{i<\omega}|B| = \aleph_0\cdot |B| = |B|$, so in conclusion, $|B|\geq \aleph_{\omega}$,
    contradicting the assumption on $|\L|$.
\end{proof}

\begin{proposition}\label{inf-ERP implies  uncountable or embedding local fin}
    Let $\C$ be an infinitary Ramsey class of finitely generated $\L$-structures. Then $\C$ has the embedding local finiteness property.
\end{proposition}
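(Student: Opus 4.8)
The plan is to argue by contradiction: assume $\C$ does not have the embedding local finiteness property, extract a \emph{maximal} witness to this failure, and use it to $2$-colour a structure in a way that contradicts inf-ERP.

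First I would set up the consequences of inf-ERP that I need. Since $\C$ has inf-ERP it has f-ERP (combine \Cref{prop:inf-ERP-iff-finite-in-fg} with the compactness argument behind \Cref{f-ERP iff M arrows finite sets}), so $\C$ is a finitary Ramsey class and \Cref{lem:f-ERP-implies-same-qftp-different-gens} applies: distinct tuples with the same quantifier-free type in a member of $\C$ generate distinct substructures. In particular every member of $\C$ is rigid, and if $B=\langle\bar b\rangle\in\C$ for a finite tuple $\bar b$, then for every $C\in\C$ each embedded copy of $B$ in $C$ is generated by a \emph{unique} copy of $\bar b$ in $C$, so $|\binom{C}{B}|\le|\binom{C}{\bar b}|$.

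Next, assuming $\C$ fails elf, I would apply \Cref{lem:bounds-on-fg for embeddings} to obtain $B\in\C$ and a finite $A\subseteq B$ such that $\mu:=|\binom{B}{A}|$ equals the maximum of $|\binom{X}{Y}|$ over all $X\in\C$ and finite $Y\subseteq X$; since elf fails, $\mu\ge\aleph_0$. Fix a finite generating tuple $\bar b$ of $B$ and apply inf-ERP in the form of \Cref{prop:inf-ERP-iff-finite-in-fg}(2), with $M=B$, the finitely generated substructure $B\subseteq B$, and the finite subset $A$, to obtain $N\in\C$ and a finitely generated $C\subseteq N$ with $\arrows{C}{B}^A_2$. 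By the maximality of $\mu$ we have $|\binom{C}{B}|\le|\binom{C}{\bar b}|\le\mu$ and $|\binom{C}{A}|\le\mu$, while each copy $\tilde B\in\binom{C}{B}$ is isomorphic to $B$ (so the isomorphism restricts to a bijection between copies of $A$) and hence $|\binom{\tilde B}{A}|=|\binom{B}{A}|=\mu$, and moreover $\binom{\tilde B}{A}\subseteq\binom{C}{A}$ since passing to a substructure does not change quantifier-free types. It then remains to contradict $\arrows{C}{B}^A_2$ by producing a $2$-colouring $\chi:\binom{C}{A}\to 2$ with no monochromatic copy of $B$, which I would do by a transfinite greedy construction: enumerate $\binom{C}{B}=\{\tilde B_\alpha:\alpha<\delta\}$ with $\delta\le\mu$, and recursively choose distinct
\[
v_\alpha,\ w_\alpha\in\binom{\tilde B_\alpha}{A}\setminus\{v_\beta,w_\beta:\beta<\alpha\},
\]
which is possible because $|\{v_\beta,w_\beta:\beta<\alpha\}|\le 2|\alpha|<\mu=|\binom{\tilde B_\alpha}{A}|$, using that $\alpha<\delta\le\mu$ forces $|\alpha|<\mu$. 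Setting $\chi(v_\alpha)=0$ and $\chi(w_\alpha)=1$ for all $\alpha$ and $\chi\equiv 0$ elsewhere, every $\binom{\tilde B_\alpha}{A}$ meets both colour classes, so no copy of $B$ in $C$ is $\chi$-monochromatic, contradicting $\arrows{C}{B}^A_2$.

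The main obstacle is the cardinal bookkeeping in the last step. The maximality from \Cref{lem:bounds-on-fg for embeddings} is used precisely to ensure that the number of copies of $B$ inside $C$ does not exceed the infinite number $\mu$ of copies of $A$ inside a single copy of $B$; this, together with the trivial fact that every ordinal below the cardinal $\mu$ has cardinality strictly below $\mu$, is exactly what prevents the greedy colouring from getting stuck. The unique-generation input from \Cref{lem:f-ERP-implies-same-qftp-different-gens} plays the auxiliary role of bounding $|\binom{C}{B}|$ by the controlled quantity $|\binom{C}{\bar b}|$, which the maximality then caps by $\mu$; I would also want to check carefully the routine identification of copies of the finitely generated $B$ with copies of its generating tuple $\bar b$.
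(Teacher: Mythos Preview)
Your proof is correct and follows essentially the same route as the paper's: use \Cref{lem:bounds-on-fg for embeddings} to pick a maximal pair $(B,A)$, bound $|\binom{C}{B}|$ by $|\binom{C}{\bar b}|\le\mu$ via a finite generating tuple $\bar b$ of $B$, and then run a transfinite greedy $2$-colouring of $\binom{C}{A}$ so that every copy of $B$ sees both colours. The only difference is cosmetic: the paper carries this out for an \emph{arbitrary} $C\in\C$, while you apply it to the specific $C$ produced by inf-ERP; either yields the contradiction.

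One small simplification: your detour through f-ERP and \Cref{lem:f-ERP-implies-same-qftp-different-gens} is unnecessary for the inequality $|\binom{C}{B}|\le|\binom{C}{\bar b}|$. Since $\bar b$ generates $B$, the map $e\mapsto e(\bar b)$ from embeddings $B\hookrightarrow C$ to copies of $\bar b$ in $C$ is already injective (indeed bijective), with no rigidity input required. The paper uses exactly this observation directly.
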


\begin{proof}
    Suppose for a contradiction that $\C$ is an infinitary Ramsey class that does not have elf. Let $B\in \cC$ and $A\subseteq B$ as promised from \Cref{lem:bounds-on-fg for embeddings}. Let $C\in \cC$.
    We will construct a colouring $\chi:\binom{C}{A}\to \Set{0,1}$ such that for all $\tilde B\in\binom{C}{B}$ we have that $|\Im(\chi\restr{\binom{\tilde B}{A}})|>1$.
    
    Let $\kappa:=\left|\binom{C}{B}\right|$ and let $\Set{e_i}_{i<\kappa}$ be an enumeration of $\binom{C}{B}$. Let $B'\subseteq B$ be finite such that $\Braket{B'} = B$. Clearly, there is an injective function $f:\binom{C}{B'}\to \binom{C}{B}$, so $\kappa\leq \left|\binom{C}{B'}\right|$.
    By choice of $A$ and $B$, we have that $\left|\binom{C}{B'}\right|\leq \left|\binom{B}{A}\right|$, in particular $\kappa\leq \left|\binom{B}{A}\right|$.

    We construct, transfinitely, a sequence of partial colourings $(\chi_{i}:\binom{C}{A}\to \Set{0,1})_{i<\kappa}$ as follows. Let $\chi_0$ be the empty colouring. Assume that $(\chi_j)_{j<i}$ have been chosen and that for every $j<i$ we have that $\mathsf{dom}(\chi_j)$ is of size less than $\kappa$. Then, we may choose $f_i, g_i\in \binom{B}{A}$ such that $e_i\circ f_i(A), e_i\circ g_i \notin \bigcup_{j<i}\mathsf{dom}(\chi_j)$ and $f_i\neq g_i$. Let $\chi_i(e_i\circ f_i)=0$ and $\chi_i(e_i\circ f_i)=1$
    
    Let $\chi:\binom{C}{A}\to \Set{0,1}$ be any colouring such that $\chi\supseteq \bigcup_{i<\kappa} \chi_i$. It remains to show that for all $e\in\binom{C}{B}$ we have that 
    $|\Im(\chi\restr{e\circ \binom{B}{A}}|>1$. Let $i<\kappa$. 
    Then $\chi(e_i\circ f_i) = \chi_i(e_i\circ f_i) = 0$ and $\chi(e_i\circ g_i) = \chi_i(e_i\circ g_i) = 1$.
    Since for every $e\in\binom{C}{B}$, there is some $i<\kappa$ such that $e=e_i$, the \namecref{inf-ERP implies  uncountable or embedding local fin} is proven.
\end{proof}

\begin{proposition}\label{inf-ERP follows from f-ERP+embedding local fin}
    Let $\C$ be a finitary Ramsey class with the embedding local finiteness property. Then $\C$ is an infinitary Ramsey class.
\end{proposition}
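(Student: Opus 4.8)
The plan is to verify the second condition of \Cref{prop:inf-ERP-iff-finite-in-fg} for $\C$, namely that for every $M\in\C$, every finitely generated substructure $B\subseteq M$, and every \emph{finite} $A\subseteq B$ there are $N\in\C$ and a finitely generated $C\subseteq N$ with $\arrows{C}{B}^A_2$; by that proposition this gives inf-ERP, and since $\C$ already has HP and JEP (being a finitary Ramsey class), it follows that $\C$ is an infinitary Ramsey class. So fix $M,B,A$ as above.

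The first step is to replace the (possibly infinite) structure $B$ by a finite ``approximation'' using elf. Fix a finite generating set $G\subseteq B$ with $\langle G\rangle_M=B$. Since $B\in\C$ (by HP) and $\C$ has elf, $B$ has elf, so by \Cref{embedding local fin iff embedding fin subset} there is a finite $B_1\subseteq B$ with $\binom{B_1}{A}=\binom{B}{A}$. Put $B_0:=A\cup G\cup B_1$, a finite subset of $M$ with $A\subseteq B_0$; then $\langle B_0\rangle_M=B$ (as $G\subseteq B_0\subseteq B$) and $\binom{B_0}{A}=\binom{B}{A}$ (as $B_1\subseteq B_0\subseteq B$ forces $\binom{B}{A}=\binom{B_1}{A}\subseteq\binom{B_0}{A}\subseteq\binom{B}{A}$). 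Now apply f-ERP to the finite subsets $A\subseteq B_0$ of $M$: there exist $N\in\C$ and a finite $C_0\subseteq N$ with $\arrows{C_0}{B_0}^A_2$. Set $C:=\langle C_0\rangle_N$, which is finitely generated and contained in $N$.

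It remains to check $\arrows{C}{B}^A_2$. Let $\chi\colon\binom{C}{A}\to 2$. Since $C_0\subseteq C$ we have $\binom{C_0}{A}\subseteq\binom{C}{A}$, so applying the arrow for $C_0$ to $\chi\restr{\binom{C_0}{A}}$ yields some $\tilde B_0\in\binom{C_0}{B_0}$ on which $\chi$ is constant on $\binom{\tilde B_0}{A}$. Because $\qftp(\tilde B_0)=\qftp(B_0)$, the quantifier-free type of a finite set determines the generated substructure together with the tautological bijection onto it, so the bijection $B_0\to\tilde B_0$ extends to an isomorphism $\phi\colon B=\langle B_0\rangle_M\to\langle\tilde B_0\rangle_N=:\tilde B$. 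As $\tilde B_0\subseteq C_0\subseteq C$ and $C$ is a substructure of $N$, we get $\tilde B\subseteq C$, so $\tilde B\in\binom{C}{B}$. Finally, $\phi$ induces a bijection $\binom{B}{A}\to\binom{\tilde B}{A}$ which restricts to a bijection $\binom{B_0}{A}\to\binom{\tilde B_0}{A}$ (since $\phi(B_0)=\tilde B_0$); combining this with $\binom{B_0}{A}=\binom{B}{A}$ gives $\binom{\tilde B_0}{A}=\binom{\tilde B}{A}$. Hence $\chi$ is constant on $\binom{\tilde B}{A}$, as required.

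The only point where the two hypotheses genuinely interact — and the step I expect to require the most care in a clean write-up — is this last colour-transfer: elf guarantees that every copy of $A$ inside $B$ already lies inside the chosen finite generating subset $B_0$, so that f-ERP, which a priori controls only colourings of copies of the \emph{finite} set $B_0$, in fact controls all copies of $A$ in $B$ once $C_0$ is closed under the operations of $N$. The remaining facts ($\langle B_0\rangle_M=B$, the equalities among the various $\binom{\cdot}{\cdot}$-sets, and that the quantifier-free type of a finite generating set pins down the generated structure) are routine bookkeeping.
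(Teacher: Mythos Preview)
Your proof is correct and follows essentially the same route as the paper's: use elf to find a finite generating subset $B_0\subseteq B$ that already contains every copy of $A$ in $B$, apply f-ERP to the pair $A\subseteq B_0$, and then pass to the generated substructure. Your write-up is in fact more careful than the paper's on two points: you explicitly invoke \Cref{prop:inf-ERP-iff-finite-in-fg} to reduce to the case of finite $A$ (the paper's proof starts with finite $A$ without citing this reduction), and you spell out the bijection $\binom{B}{A}\to\binom{\tilde B}{A}$ and why it collapses to $\binom{\tilde B_0}{A}$, where the paper's proof is somewhat elliptical.
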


\begin{proof}
    Let $B\subseteq M\in\C$ be a finitely generated substructure, and let $A\subseteq B$ be a finite set. By \Cref{embedding local fin iff embedding fin subset}, there is some finite $B'\subseteq B$ such that $\binom{B'}{A}=\binom{B}{A}$, and since $B$ is finitely generated, we may assume that $B'$ generates $B$. 
    
    By f-ERP, there is some $N\in\C$ and some finite $C\subseteq N$ such that $\arrows{C}{B^\prime}^A_2$. Let $C^\prime  = \langle C \rangle_N$. We claim that $\arrows{C^{\prime}}{B}^A_2$. Indeed, by construction, given a colouring $\chi:\binom{C^\prime}{A}\to\Set{0,1}$ there is some $\tilde{B}^\prime\in\binom{C}{B^\prime}$ such that $|\Im(\chi\restr\binom{\tilde{B}^\prime}{A})| = 1$.  But then, since $\qftp(\tilde{B}^\prime) = \qftp(B^\prime)$, we have that $\binom{\tilde{B}^\prime}{A} =\binom{B}{A}$, and hence the result follows.
\end{proof}

\begin{theorem}\label{inf-ERP iff embedding local fin (assuming countable ERP)}
    Let $\C$ be an isomorphism-closed class of $\L$-structures with the hereditary property and the joint embedding property. Then, the following are equivalent:
    \begin{enumerate}
        \item $\C$ has the infinitary embedding Ramsey property.
        \item $\C$ has the finitary embedding Ramsey property and embedding local finiteness.
    \end{enumerate}
\end{theorem}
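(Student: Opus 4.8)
The plan is to obtain the equivalence by splicing together the two propositions immediately preceding it, \Cref{inf-ERP implies  uncountable or embedding local fin} and \Cref{inf-ERP follows from f-ERP+embedding local fin}, which already isolate its two halves; the only genuine gap to fill is the implication ``inf-ERP $\implies$ f-ERP'', which I would handle by a short transfer argument. Throughout I will use that a class with HP and JEP which in addition has f-ERP (respectively inf-ERP) is, by definition, a finitary Ramsey class (\Cref{def:finitary-rc}) (respectively an infinitary Ramsey class, in the evident analogy), and, in order to be able to apply \Cref{inf-ERP implies  uncountable or embedding local fin}, I take $\C$ to consist of finitely generated structures, as elsewhere in this \namecref{sec:local-finiteness}.

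For $(2)\implies(1)$ there is essentially nothing to do: by hypothesis $\C$ has HP, JEP and f-ERP, so it is a finitary Ramsey class, and since it also has elf, \Cref{inf-ERP follows from f-ERP+embedding local fin} says that $\C$ is an infinitary Ramsey class; in particular $\C$ has inf-ERP.

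For $(1)\implies(2)$ I would argue in two steps. First, $\C$ has HP, JEP and inf-ERP, hence is an infinitary Ramsey class of finitely generated structures, so \Cref{inf-ERP implies  uncountable or embedding local fin} immediately gives elf. Second, I deduce f-ERP from inf-ERP together with the elf just obtained. Fix $M\in\C$ and finite subsets $A\subseteq B\subseteq M$, and set $B^{+}:=\langle B\rangle_{M}$, a finitely generated substructure of $M$, hence a member of $\C$ by HP. By the version of inf-ERP in which the distinguished sub-object is only required to be finite while the ``medium'' object is finitely generated (\Cref{prop:inf-ERP-iff-finite-in-fg}), there are $N\in\C$ and a finitely generated $C\subseteq N$ with $\arrows{C}{B^{+}}^{A}_{2}$. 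Since $B\subseteq B^{+}$, any $\widetilde{B^{+}}\in\binom{C}{B^{+}}$ contains, via the quantifier-free-type-preserving bijection witnessing $\widetilde{B^{+}}\in\binom{C}{B^{+}}$, a subset $\tilde B\in\binom{C}{B}$ with $\binom{\tilde B}{A}\subseteq\binom{\widetilde{B^{+}}}{A}$; hence $\arrows{C}{B}^{A}_{2}$. Finally, by elf the sets $\binom{C}{A}$ and $\binom{C}{B}$ are finite, so the \emph{finite} subset $C_{0}:=\bigl(\bigcup\binom{C}{A}\bigr)\cup\bigl(\bigcup\binom{C}{B}\bigr)$ of $N$ satisfies $\binom{C_{0}}{A}=\binom{C}{A}$ and $\binom{C_{0}}{B}=\binom{C}{B}$, and therefore $\arrows{C_{0}}{B}^{A}_{2}$. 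As $C_{0}$ is a finite subset of $N\in\C$, this witnesses f-ERP, completing $(1)\implies(2)$.

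The only slightly delicate point — and it is mild — is in the second step above: because $B^{+}=\langle B\rangle_{M}$ need not be finite (it is only finitely generated), the passage from $\arrows{C}{B^{+}}^{A}_{2}$ to $\arrows{C}{B}^{A}_{2}$ must go through the quantifier-free-type correspondence between $B^{+}$ and its copies in $C$ rather than through a literal inclusion of finite sets, and one genuinely uses the elf established in the first step to cut the finitely generated witness $C$ down to the honestly finite $C_{0}$ that \Cref{def:f-ERP} demands. Everything else is a direct appeal to \Cref{inf-ERP implies  uncountable or embedding local fin} and \Cref{inf-ERP follows from f-ERP+embedding local fin}.
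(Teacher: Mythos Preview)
Your proof is correct and follows the same architecture as the paper's proof, which is the one-liner ``follows from \Cref{inf-ERP implies  uncountable or embedding local fin} and \Cref{inf-ERP follows from f-ERP+embedding local fin}.'' You have, however, done something the paper does not: you noticed that neither of the two cited propositions actually supplies the implication inf-ERP $\implies$ f-ERP, and you filled that gap with the elf-based cut-down from the finitely generated witness $C$ to a genuinely finite $C_0$. That argument is sound --- elf gives $\binom{C}{A}$ and $\binom{C}{B}$ finite, and your $C_0$ then satisfies $\binom{C_0}{A}=\binom{C}{A}$ and $\binom{C_0}{B}=\binom{C}{B}$ since $C_0\subseteq C$ and every copy already lies in $C_0$ by construction. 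Your added hypothesis that $\C$ consist of finitely generated structures is also appropriate, as \Cref{inf-ERP implies  uncountable or embedding local fin} requires it and the ambient \namecref{sec:local-finiteness} implicitly works in that setting.
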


\begin{proof}
    The \namecref{inf-ERP iff embedding local fin (assuming countable ERP)} follows from \Cref{inf-ERP implies  uncountable or embedding local fin} and \Cref{inf-ERP follows from f-ERP+embedding local fin}.
\end{proof}

\section{Concluding Remarks}\label{sec:questions}

In \Cref{sec:local-finiteness}, we introduced two notions that extend the embedding Ramsey property to arbitrary classes (i.e., classes not necessarily consisting of finite structures), namely the \emph{finitary} and \emph{infinitary} embedding Ramsey properties.  We tried to make the case that f-ERP is the right generalisation of ERP to arbitrary classes of structures, by showing that both the Kechris-Pestov-Todorcevic correspondence and Scow's theorem transfer immediately when one removes all finiteness assumptions from the class in question and replaces ERP with f-ERP. That being said, it is still unclear what the precise relationship between the two properties is.

We discussed some connections between the two but did not resolve whether inf-ERP coincides with f-ERP.

\begin{question}
    Does f-ERP imply embedding local finiteness? 
\end{question}

By \Cref{inf-ERP iff embedding local fin (assuming countable ERP)}, a positive answer to this question would imply that f-ERP and inf-ERP are equivalent notions, and hence that there is only one natural generalisation of ERP to classes that do not necessarily contain only finite structures. On the other hand, a negative answer to this question, combined with the results from \Cref{sec:local-finiteness} would mean that f-ERP is the ``correct'' generalisation of ERP to arbitrary classes of structures.

\bibliographystyle{alpha}
\bibliography{bibliography}

\end{document}